\newtheorem{theorem}{Theorem}[section]
\newtheorem{lemma}[theorem]{Lemma}
\newtheorem{proposition}[theorem]{Proposition}
\theoremstyle{definition}
\newtheorem{definition}[theorem]{Definition}
\newtheorem{cor}[theorem]{Corollary}
\theoremstyle{remark}
\newtheorem{remark}[theorem]{Remark}
\numberwithin{equation}{section}
  \DeclareSymbolFont{cyrletters}{OT2}{wncyr}{m}{n}
  \DeclareMathSymbol{\sha}{\mathalpha}{cyrletters}{"58}
\newcommand{\spec}{\text{Spec} \ }
\newcommand{\inj}{\hookrightarrow}
\newcommand{\R}{\mathbb{R}}
\newcommand{\A}{\mathbb{A}}
\newcommand{\n}{\mathfrak{n}}
\newcommand{\m}{\mathfrak{m}}
\newcommand{\p}{\mathfrak{P}}
\newcommand{\Q}{\mathbb{Q}}
\newcommand{\F}{\mathbb{F}}
\newcommand{\Z}{\mathbb{Z}}
\newcommand{\C}{\mathbb{C}}
\newcommand{\st}{\text{st}}
\newcommand{\gr}{\mathrm{gr}}
\newcommand{\ad}{\mathrm{ad}}
\newcommand{\gal}{\mathrm{Gal}}
\newcommand{\vl}{\mathrm{val}}
\newcommand{\Cor}{\text{Cor}}
\newcommand{\av}{\text{avoid}}
\newcommand{\suff}{\text{suff}}
\newcommand{\sym}{\text{Symm}}
\newcommand{\res}{\mathrm{Res}}
\begin{document}

\title{Potential Automorphy for $GL_n$}

%    Information for first author
\author{Lie Qian}
%    Address of record for the research reported here
\address{Department of Mathematics, Stanford University}
%    Current address
\curraddr{Department of Mathematics, Stanford University}
\email{lqian@stanford.edu}
%    \thanks will become a 1st page footnote.
\thanks{}

%    Information for second author

%    General info

\date{April 19, 2021}

\dedicatory{}

\keywords{}

\begin{abstract}
We prove potential automorphy results for a single Galois representation $G_F\rightarrow GL_n(\overline{\Q}_l)$ where $F$ is a CM number field. The strategy is to use the $p,q$ switch trick and modify the Dwork motives employed in \cite{HSBT} to break self-duality of the motives, but not the Hodge-Tate weights. Another key result to prove is the ordinarity of certain $p$-adic representations, which follows from log geometry techniques. One input is the automorphy lifting theorem in \cite{tap}.
\end{abstract}

\maketitle

\section{Introduction}

In this paper we prove potential automorphy theorems for $n$-dimensional $l$-adic and residual representations of the absolute Galois group of an imaginary CM field. 

The precise statement of the theorem for residual representations is as following.

\begin{theorem}
\label{1.1}

Suppose $F$  is a CM number field, $F^{\mathrm{a}\mathrm{v}}$  is a finite extension of $F$ and $n\geq 2$  is a positive integer. Let $l$  be a prime number and suppose that
$$
\overline{r}: \mathrm{Gal}(\overline{F}/F)\rightarrow GL_n(\mathbb{F}_{l^{s}})
$$
 is a continuous semisimple representation. Then there exists a finite CM Galois extension $F'/F$  linearly disjoint from $F^{\mathrm{a}\mathrm{v}}$  over $F$ such that $\overline{r}\mid_{\mathrm{Gal}(\overline{F}/F')}$  is ordinarily automorphic.

\end{theorem}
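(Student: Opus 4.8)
The plan is to realise $\overline{r}$, after restriction to a suitable CM field, as the residual Galois representation attached to one fibre of a family of $n$-dimensional motives all of whose members can be shown to be potentially ordinarily automorphic, and then to conclude by the automorphy lifting theorem of \cite{tap}. The family will be a \emph{non-self-dual} modification of the Dwork pencils of \cite{HSBT}: the classical Dwork motives have essentially self-dual middle cohomology, with symplectic or orthogonal monodromy, so their specialisations realise only essentially conjugate-self-dual residual representations, whereas here no such hypothesis on $\overline{r}$ is available. Before doing anything geometric I would carry out the standard formal reductions. Passing to the irreducible constituents of $\overline{r}$ and recombining the resulting automorphic representations isobarically (arranging a common CM field by a simultaneous Moret--Bailly step on a product of families, so that no non-solvable base change is needed), one reduces to the case that $\overline{r}$ is irreducible. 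Twisting by a finite-order character, whose automorphy is elementary and which is undone at the end, one normalises $\det\overline{r}$ to agree with the determinant character of the family below; crucially, since essential self-duality has been engineered out of that family, no parity obstruction intervenes. Finally, enlarging $F$ by a CM extension linearly disjoint from $F^{\mathrm{av}}$, one arranges that the restriction of $\overline{r}$ satisfies the bigness (``adequacy'') hypotheses demanded by \cite{tap}; this last point requires extra care when $l$ is small.

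For the geometric input, in place of the part of the middle cohomology of a Dwork hypersurface invariant under its finite automorphism group $W$, I would work with an isotypic component $H^{\bullet}(X_t)[\psi]$ for a character $\psi$ of $W$ with $\psi\neq\psi^{-1}$. Such a component is a local system of hypergeometric type which is \emph{not} essentially self-dual, so, away from the small exceptional cases in Katz's classification of monodromy of hypergeometric sheaves, its $l$-adic monodromy group contains $SL_n$; and by choosing the Fermat--Dwork pencil (its degree and ambient projective dimension) and a Tate twist appropriately one can arrange that $[\psi]$ is $n$-dimensional with Hodge--Tate weights $0,1,\dots,n-1$. Write $\mathcal{V}\rightarrow T$ for the resulting family of rank-$n$ motives, $T$ open in $\mathbb{P}^1_F$. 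Because the geometric monodromy contains $SL_n$, a Moret--Bailly type theorem becomes available: the variety over $F$ parametrising a point of $T$ together with an isomorphism of its mod $l$ realisation with $\overline{r}$ is geometrically irreducible, and hence has a point $t_0$ rational over a finite CM Galois extension $F'/F$ linearly disjoint from $F^{\mathrm{av}}$ (applied, if necessary, to a product of copies of the family so that $F'$ may be taken Galois over $F$); moreover $t_0$ may be chosen to reduce, at the places of $F'$ above $l$, to a prescribed ordinary point of the (logarithmic) special fibre, and to satisfy prescribed conditions at one or two further auxiliary primes $q$. For this $t_0$ one has $\overline{\rho}_{\mathcal{V},t_0,l}\cong\overline{r}|_{\gal(\overline{F}/F')}$, so the problem is reduced to showing that the motive $\mathcal{V}_{t_0}$ becomes ordinarily automorphic over $F'$ (or a further CM extension, absorbed into $F'$).

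To prove that, I would run the $p,q$ switch trick of \cite{HSBT}. The motive $\mathcal{V}_{t_0}$ belongs to a compatible system, so one passes to its mod $q$ realisation; the Fermat fibre $t=0$ of the pencil, whose cohomology is a sum of automorphic inductions of Jacobi-sum Hecke characters (hence of characters of abelian, so solvable, extensions), is automorphic; and a further Moret--Bailly step produces a fibre that is congruent modulo $q$ to $\mathcal{V}_{t_0}$ and congruent modulo another auxiliary prime to the Fermat fibre, so that automorphy propagates from the Fermat fibre — first to that fibre, then to $\mathcal{V}_{t_0}$ — through the automorphy lifting theorem of \cite{tap} applied at the auxiliary primes. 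This forces $\mathcal{V}_{t_0}$ (and the intermediate fibres) to be ordinary at those primes, and, for the final conclusion, at $l$; this is exactly where log geometry enters. The natural integral models of the $\psi$-isotypic pieces acquire only semistable reduction along the boundary of $T$, so I would compute the decomposition-group action on the $l$-adic realisation at a place above $l$ through the Hyodo--Kato (log-crystalline) cohomology of a semistable model, and read off ordinarity from the logarithmic special fibre being ordinary. Assembling these ingredients, $\mathcal{V}_{t_0}$ is ordinarily automorphic over $F'$, and hence so is $\overline{r}|_{\gal(\overline{F}/F')}$; by construction $F'/F$ is CM, Galois, and linearly disjoint from $F^{\mathrm{av}}$, which completes the proof.

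I expect the main obstacle to be twofold and largely geometric. First, one must produce a modification of the Dwork motive having all the required properties simultaneously — Hodge--Tate weights exactly $0,1,\dots,n-1$, failure of essential self-duality, and $l$-adic monodromy large enough ($\supseteq SL_n$) for the Moret--Bailly step — which rests on a delicate monodromy computation for the chosen hypergeometric component together with the exclusion of the exceptional small cases. Second, one must establish the ordinarity of the associated $p$-adic representations, for which one has to extract from log-crystalline cohomology a workable criterion phrased in terms of the logarithmic special fibre and then check that the fibres supplied by the Moret--Bailly argument satisfy it. The remainder — the bookkeeping of the $p,q$ switch and the invocation of \cite{tap} — is by now fairly routine.
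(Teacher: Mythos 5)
Your skeleton does track the paper's strategy (a non-self-dual eigenspace of the Dwork pencil with consecutive Hodge--Tate weights, a Moret--Bailly argument over a connected moduli of isomorphisms, a $p,q$ switch, Hyodo--Kato/log-geometry for ordinarity, and the lifting theorem of \cite{tap}), but the automorphic seed you propose breaks the argument. You propagate automorphy from the Fermat fibre $t=0$, whose mod-$q$ realisation is a direct sum of Jacobi/Gauss-sum characters, "through the automorphy lifting theorem of \cite{tap} applied at the auxiliary primes." The only lifting theorem available in this non-polarized CM setting, Theorem 6.1.2 of \cite{tap}, requires the residual representation at the auxiliary prime to be absolutely irreducible, decomposed generic, and to have enormous image after restriction to the cyclotomic extension; a sum of characters violates all of this, and (unlike the polarized setting of \cite{HSBT}, \cite{BLGHT}, where theorems accommodating induced/"big" dihedral-type residual image exist) there is no CM-field lifting theorem covering residually reducible situations. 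The paper avoids exactly this by seeding with $\mathrm{Symm}^{n-1}$ of a non-CM elliptic curve at a carefully chosen auxiliary prime $l'$ ($l'\equiv 1 \bmod N$, good ordinary reduction, surjective mod-$l'$ image, a suitable scalar element), matching it mod $l'$ with $V[\lambda']_t$ in a single moduli problem; the Fermat fibre is used only to compute determinants and, via the choice $v(t)>0$ at $l'$ and crystalline comparison, to prove ordinarity at $l'$ --- never as the automorphy seed. Without replacing your seed, the chain "Fermat fibre $\Rightarrow$ intermediate fibre $\Rightarrow$ $\mathcal{V}_{t_0}$" does not go through.

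Two further points. First, your preliminary reductions are both unnecessary and partly unworkable: you cannot "arrange" adequacy/enormous image for an arbitrary semisimple $\overline{r}$ by passing to an extension (restriction only shrinks the image, and $\overline{r}$ could be a sum of characters), and no Taylor--Wiles hypothesis on $\overline{r}$ is ever needed --- the lifting theorem is applied only on the auxiliary-prime side, automorphy of the lift $V_{\lambda,t}$ at $l$ being inherited through the compatible system; similarly, splitting $\overline{r}$ into irreducible constituents and "recombining isobarically" is in tension with the definition of automorphy used here (a single cuspidal $\pi$) and is simply avoided in the paper. Second, two steps you treat as routine are genuine work: matching $\det\overline{r}$ with the determinant of the family is not a mere finite-order twist but requires extracting an $n$-th root of the ratio character over a CM extension kept linearly disjoint from the prescribed fields (Lemma \ref{2.1} together with the Gauss-sum computation in Lemma \ref{npower} showing the ratio lands in $n$-th powers), and geometric connectedness of the moduli space needs surjectivity of the \emph{finite} monodromy onto $SL_n(k(\lambda))\times SL_n(k(\lambda'))$ (transvection classification plus Goursat and the combinatorial Lemma \ref{3.5}), which is strictly more than the Zariski monodromy of the hypergeometric sheaf containing $SL_n$.
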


We first breifly recall the definitions of the terms appearing in the theorem.

Recall that for $E$ any CM (or totally real) field, we could attach to any regular algebraic cuspidal automorphic representation $\pi$ of $GL_{n}(\mathbb{A}_{E})$ an $l$-adic Galois representation of $G_{E}$ satisfying certain local-global compatibility condition by the main theorem of \cite{HLLT}. 

More precisely, fix an isomorphism $\overline{\mathbb{Q}}_{l}\rightarrow \mathbb{C}$. For such a $\pi$, there is a unique continuous semisimple representation
$$
r_{l,\iota}(\pi):G_{E}\rightarrow GL_{n}(\overline{\mathbb{Q}}_{l})
$$
 such that, if $p\neq l$  is a rational prime above which $\pi$  and $E$  are unramified and if $v|p$  is a prime of $E$,  then $r_{l,\iota}(\pi)$  is unramified at $v$  
$$
r_{l,\iota}(\pi)|_{W_{E_{v}}}^{ss}=\iota^{-1}rec_{E_{v}}(\pi_{v}|\mathrm{det}|_{v}^{(1-n)/2})
$$
here $rec_{E_{v}}$  denotes the local Langlands correspondance for $E_{v}$ and $|^{ss}$ denotes the semisimplification.

\begin{definition}
For a $p$-adic local field $L$ and a continuous representation $\rho: G_L\rightarrow GL_n(\overline{\Q}_p)$, we say it is ordinary with regular Hodge-Tate weight if there exists a weight $\lambda=(\lambda_{\tau, i})\in (\{(a_1, \ldots, a_n)|a_1\geq\cdots\geq a_n\})^{\mathrm{H}\mathrm{o}\mathrm{m}(L,\overline{\mathbb{Q}}_{p})}=:(\mathbb{Z}_{+}^{n})^{\mathrm{H}\mathrm{o}\mathrm{m}(L,\overline{\mathbb{Q}}_{p})}$  such that there is an isomorphism:

\[
\rho\sim\left(
\begin{matrix}
\psi_{1}&\ast&\ast&\ast\\
0&\psi_{2}&\ast&\ast\\
\vdots&\ddots&\ddots&\ast\\
0&\cdots&0&\psi_{n}
\end{matrix}
\right)
\]
where for each $i=1,\ldots,n$  the character $\psi_{i}$ : $G_{L}\rightarrow\overline{\mathbb{Q}}_{p}^{\times}$  agrees with the character

$$
\sigma\in I_{L}\mapsto\prod_{\tau\in \mathrm{H}\mathrm{o}\mathrm{m}(L,\overline{\mathbb{Q}}_{p})}\tau(\mathrm{Art}_{L}^{-1}(\sigma))^{-(\lambda_{\tau,n-i+1}+i-1)}
$$
 on an open subgroup of the inertia group $I_{L}$.
\end{definition}

\begin{definition}
For a Galois representation $r: G_{E}\rightarrow GL_{n}(\overline{\mathbb{Q}}_{l})$ , we say it is automorphic if there exists a regular algebraic cuspidal automorphic representation $\pi$ such that $r\cong r_{l,\iota}(\pi)$ . And for a residual representation $\overline{r}$ : $G_{E}\rightarrow GL_{n}(\overline{\mathbb{F}}_{l})$ , we say it is automorphic if there exists a lift $r$ of $\overline{r}$ that is automorphic. We say it is ordinarily automorphic if there exists an automorphic lift $r$ which is potentially semistable and ordinary with regular Hodge-Tate weights as $G_{E_v}$ representation for all $v\mid l$.

\end{definition}

We also remark here that restricting to some $G_{F'}$ for an Galois extension $F'/F$ that avoids a prescribed finite extension $F^{\text{av}}$ of $F$ can ensure that the image $\overline{r}(G_{F'})$ does not shrink.

Combine our main theorem for residual representation Theorem \ref{1.1} with the automorphy lifting theorem 6.1.2 from \cite{tap} and the main result of \cite{qian}, we obtain a potential automorphy theorem for a single $l$-adic Galois representation into $GL_n$.

\begin{theorem}
\label{1.4}
 Suppose $F$  is a CM number field, $F^{\mathrm{a}\mathrm{v}}$  is a finite extension of $F$ and $n\geq 2$  is a positive integer. Let $l$  be a prime number. Fix an isomorphism $\iota: \overline{\mathbb{Q}}_{l}\rightarrow \mathbb{C}$ and suppose that
 \[
 r: G_F\rightarrow GL_n(\overline{\Q}_l)
 \]
 is a continuous representation satisfying the following condition:
 \begin{itemize}
     \item $r$ is unramified almost everywhere.
     \item For each place $v|l$ of $F$,  the representation $r|_{G_{F_{v}}}$  is potentially semistable, ordinary with regular Hodge-Tate weights $\lambda\in(\mathbb{Z}_{+}^{n})^{\mathrm{H}\mathrm{o}\mathrm{m}(F,\overline{\mathbb{Q}}_{p})}$.
     \item $\overline{r}$  is absolutely irreducible and decomposed generic (See \cite{tap} Definition 4.3.1). The image of $\overline{r}|_{G_{F(\zeta_{l})}}$  is enormous (See \cite{tap} Definition 6.2.28).
     \item There exists $\sigma\in G_{F}-G_{F(\zeta_{l})}$  such that  $\overline{r}(\sigma)$ is a scalar.
     \end{itemize}
     
Then there exists a finite CM Galois extension $F'/F$  linearly disjoint from $F^{\mathrm{a}\mathrm{v}}$  over $F$ such that $r\mid_{G_{F'}}$  is ordinarily automorphic.
\end{theorem}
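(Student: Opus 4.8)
The plan is to deduce this $l$-adic potential automorphy statement from the residual statement (Theorem \ref{1.1}) by an automorphy lifting argument, following the standard "potential automorphy plus lifting" paradigm. First I would apply Theorem \ref{1.1} to the residual representation $\overline{r}$, together with the finite extension $F^{\mathrm{av}}$ enlarged to also absorb the fixed field of $\ker\overline{r}$, the field $F(\zeta_l)$, and any auxiliary CM extension needed to make the relevant local conditions at primes above $l$ behave well; this produces a finite CM Galois extension $F_1/F$, linearly disjoint from that enlarged $F^{\mathrm{av}}$ over $F$, such that $\overline{r}|_{G_{F_1}}$ is ordinarily automorphic. By the remark following Theorem \ref{1.1}, the disjointness guarantees that $\overline{r}(G_{F_1(\zeta_l)})=\overline{r}(G_{F(\zeta_l)})$, so the enormous-image hypothesis and the "$\sigma$ with $\overline{r}(\sigma)$ scalar" hypothesis descend to $F_1$; decomposed genericity is similarly preserved (or can be arranged by throwing the relevant splitting conditions into $F^{\mathrm{av}}$).

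Next I would check that the hypotheses of the automorphy lifting theorem (Theorem 6.1.2 of \cite{tap}) are met for the pair $(r|_{G_{F_1}}, \overline{r}|_{G_{F_1}})$. The key points: $\overline{r}|_{G_{F_1}}$ is ordinarily automorphic by the previous paragraph, hence has an automorphic lift $r'$ which is potentially semistable and ordinary with regular Hodge-Tate weights at all places above $l$; meanwhile $r|_{G_{F_1}}$ is, by hypothesis, unramified almost everywhere and potentially semistable and ordinary with regular Hodge-Tate weights $\lambda$ at each $v\mid l$. To match the Hodge-Tate weights of $r$ and of the automorphic lift produced by Theorem \ref{1.1}, one should feed the specific weight $\lambda$ (or rather its pullback to $F_1$) into the construction underlying Theorem \ref{1.1} — inspecting its proof, the Dwork-motive construction realizes any prescribed regular Hodge-Tate weight, so the ordinarily automorphic lift of $\overline{r}|_{G_{F_1}}$ can be taken with Hodge-Tate weights equal to those of $r|_{G_{F_1}}$. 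The residual conditions — absolute irreducibility, enormous image of $\overline{r}|_{G_{F_1(\zeta_l)}}$, decomposed genericity, and the existence of a scalar $\overline{r}(\sigma)$ for $\sigma\notin G_{F_1(\zeta_l)}$ — are exactly the hypotheses imposed in Theorem \ref{1.4} and have just been shown to persist after restriction to $F_1$. Then the automorphy lifting theorem applies and shows $r|_{G_{F_1}}$ is automorphic; since it is already potentially semistable and ordinary with regular Hodge-Tate weights at all $v\mid l$, it is ordinarily automorphic.

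Finally I would take $F'=F_1$; it is CM, Galois over $F$, and linearly disjoint from $F^{\mathrm{av}}$ over $F$ by construction (shrinking the enlarged $F^{\mathrm{av}}$ back down can only help disjointness), and $r|_{G_{F'}}$ is ordinarily automorphic. This completes the argument. The main obstacle, and the step requiring the most care, is the compatibility of Hodge-Tate weights and of the ordinarity/potential-semistability local conditions at primes above $l$ between $r|_{G_{F'}}$ and the automorphic lift furnished by Theorem \ref{1.1}: one must make sure that the motivic construction in the proof of Theorem \ref{1.1} can be run with the prescribed weight $\lambda$ (restricted to $F'$) and that its output is genuinely ordinary at all places above $l$ — this is precisely where the log-geometry ordinarity input and the main result of \cite{qian} enter — and that after the necessary base change $F'/F$ no new ramification at $l$ spoils potential semistability. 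Once this matching is arranged, the rest is a formal application of Theorem 6.1.2 of \cite{tap} combined with the image-preservation remark.
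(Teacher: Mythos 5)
Your overall architecture---get potential residual ordinary automorphy from Theorem \ref{1.1} with the avoided field enlarged by (the normal closure of) $\overline{F}^{\ker\overline{r}}(\zeta_l)$, check that the residual hypotheses persist after restriction, and then apply the ordinary automorphy lifting theorem 6.1.2 of \cite{tap}---is the same as the paper's. The paper runs this over the specific field $F'$ and against the specific ordinary automorphic lift $V_{\lambda,t}\otimes\chi_1^{-1}$ built in the proof of Theorem \ref{1.1}, checks the enormous-image and scalar-element conditions by linear disjointness of $F'$ from $\overline{F}^{\ker\overline{r}}(\zeta_l)$, and gets decomposed genericity of $\overline{r}|_{G_{F'}}$ from Lemma 7.1.6 of \cite{tap}.

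However, your resolution of what you yourself single out as the main obstacle is wrong. The Dwork-motive construction cannot be "fed" an arbitrary prescribed regular weight: by Lemmas \ref{3.7} and \ref{3.8}, the Hodge--Tate weights of $V_{\lambda,t}$ at each embedding are forced to be a string of $n$ consecutive integers $M(a),\dots,M(a)+n-1$ (the introduction stresses that the choice of $\chi$ breaks self-duality of the motive but \emph{not} this shape of the weights), so the ordinarily automorphic lift of $\overline{r}|_{G_{F'}}$ it produces will in general not have the weights $\lambda$ of $r$. If weight-matching were actually needed, your argument would fail at exactly this point. It is not needed: Theorem 6.1.2 of \cite{tap} is an \emph{ordinary} automorphy lifting theorem in which the Hodge--Tate weights of $\rho$ need not agree with those of the $\iota$-ordinary $\pi$---the paper itself applies it in the proof of Theorem \ref{1.1} with $\rho=V_{\lambda',t}\otimes\chi_2^{-1}$ (consecutive weights shifted by $M(a)$) against $\sym^{n-1}$ of an elliptic curve (weights $0,\dots,n-1$). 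So the correct move is to delete the weight-matching step, not to justify it. A smaller point: "throwing the relevant splitting conditions into $F^{\mathrm{av}}$" is not a mechanism for preserving decomposed genericity, since linear disjointness cannot force any given prime to split in $F'$; the actual tool is Lemma 7.1.6 of \cite{tap}, applied once one knows the Galois closure of $\overline{F}^{\ker\overline{r}}(\zeta_l)$ is linearly disjoint from $F'$, which is how the paper concludes and is compatible with your enlargement of the avoided field.
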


Previously there are potential automorphy results for $\overline{r}$ and $r$ that take value in $GSp_{2n}$ (\cite{HSBT}) or more generally, the subgroup of $GL_{n}$ that preserves a nondegenerate form up to a scalar (\cite{BLGHT}). The strategy of proving the main theorems in this paper is based on the strategy of proving theorems in these paper. But there are also many crucial differences.

The main idea of the proof of Theorem \ref{1.1} is as the following. The prime $l$ is given. But we will choose some positive integer $N$ and another prime $l'$ with good properties. Note that this choice make certain arguments for the $l'$-related objects easier than their $l$-related counterpart.

Consider the Dwork family $Y\subset \mathbb{P}^{N-1}\times(\mathbb{P}^{1}\backslash (\mu_{N}\cup \{\infty\}))$ defined by the following equation:
$$
X_{1}^{N}+X_{2}^{N}+\ +X_{N}^{N}=NtX_{1}X_{2}\text{ . . . }X_{N}
$$
for the good $N$ we will choose.  The variety comes equipped with an action of the group
$$
H_{0}:=\{(\xi_{1},\ .\ .\ .\ \xi_{N})\in\mu_{N}^{N}:\xi_{1}\cdots\xi_{N}=1\}/\mu_{N}
$$
(see Section \ref{3}). After picking a character $\chi$ of $H_{0}$, we may consider the motives such that their $l$(or $l'$)-adic realisation is the $\chi$-eigenspace of the $(N-2)$-th(middle degree) etale cohomology of any fibre of this family with coefficients $\overline{\mathbb{Q}}_{l}$ (or $\overline{\mathbb{Q}}_{l'}$). We will denote such $l$(or $l'$)-adic cohomology of the fibre over the point $t$ as $V_{\lambda, t}$(or $V_{\lambda', t}$), where $\lambda, \lambda'$ is a place of $\Q(\zeta_N)$ above $l, l'$. Note that here we will choose a $\chi$ that is of a shape artificially made to break the self-duality of the motive. However, the self-duality shape of the Hodge-Tate weights will be preserved. In fact, they are a string of consecutive integers. We will try to find a point $t$ on the base defined over an extension field $F'$, such that the mod-$l$ residual Galois representation $V[\lambda]_t$ given by the fibre of the motive over $t$ is isomorphic to the $\overline{r}$ in the theorem, while the mod-$l'$ residual Galois representation $V[\lambda']_t$ given by the fibre of the motive over $t$ is isomorphic to $\overline{r}_{l',\iota}(\pi)$ for some known ordinarily automorphic representation $r_{l',\iota}(\pi)$, both as representation of $G_{F'}$ . If such a point exists, then we could apply ordinary automorphy lifting theorem 6.1.2 of \cite{tap} to see $V_{\lambda', t}$ is automorphic, and conclude that $V_{\lambda, t}$ is automorphic. Hence $\overline{r}$ is automorphic.

\ 

The above is a very rough summary of what we did in this paper. Let us be more precise now.

The first problem that is worth more explanantion is the existence of such a point $t$. Assume $\overline{r}$ and $\overline{r}_{l',\iota}(\pi)$ can be defined as representation over $k(\lambda)$ and $k(\lambda')$, where these are the residue fields for the places $\lambda$ and $\lambda'$ of $\Z[\zeta_N]$. The existence of such a point $t$ is guaranteed by a careful study of the moduli scheme that detects the isomorphisms between $\overline{r}\times \overline{r}_{l',\iota}(\pi)$ and the varying $V[\lambda]_t\times V[\lambda']_t$ as representation over $k(\lambda)\times k(\lambda')$, such that the top wedge of the isomorphism is fixed to be an a priori choice. Now the main property we use to prove the existence of such a point $t$ is the geometric connectivity of the moduli variety. And the geomoretic connectivity is in turn deduced from the result that the geometric monodromy map of this family surjects onto $SL_{n}(k(\lambda))\times SL_{n}(k(\lambda'))$, over which the fiber over $t$ of the moduli scheme is a torsor. The proof of this surjectivity result involves combinatorial arguments that precisely make use of the shape of the charater $\chi$ we choose. In contrast, we know that if we had chosen the $\chi$ to be of some nice self-dual form, then the image of the geometric monodromy map would be contained in some symplectic group $Sp_{n}(k(\lambda))\times Sp_{n}(k(\lambda'))$. We remark that in previous work, other authors have considered the moduli variety parametrizing similar isomorphisms but with the condition that certain alternating forms on both representation spaces need to be preserved, where the alternating form on the varying cohomology is induced by Poincare duality and the self-dual shape of the $\chi$ they chose.

In the above procedure, after showing that the geometric monodromy has image in $SL_{n}(k(\lambda))\times SL_{n}(k(\lambda'))$ (using  the shape of $\chi$), we see that the spaces $\wedge^n(V[\lambda]_t\times V[\lambda']_t)$ as characters of $G_F$ does not depend on the base point $t$. Thus to construct the moduli scheme, it suffices to construct a fixed isomorphism between $\wedge^n(\overline{r}\times \overline{r}_{l',\iota}(\pi))$ and $\wedge^n(V[\lambda]_t\times V[\lambda']_t)$ for any chosen $t$ an $F$ point of the base. However,   this isomorphism  does not a priori exist.  We get around this problem by restricting to a smaller $G_{F'}$ and twisting $\overline{r}\times \overline{r}_{l',\iota}(\pi)$ by a character $\overline{\chi}_1\times \overline{\chi}_2: G_{F'}\rightarrow k(\lambda)^\times\times k(\lambda')^\times$, so that we would like to construct the moduli scheme as the one detecting isomorphisms between $\left(\overline{r}\times \overline{r}_{l',\iota}(\pi)\right)\otimes\left(\overline{\chi}_1\times \overline{\chi}_2\right)$ and $V[\lambda]_t\times V[\lambda']_t$. Note that we need $\overline{\chi}_1\times \overline{\chi}_2$ to take value in exactly $k(\lambda)^\times\times k(\lambda')^\times$ because we want the fiber of the moduli scheme to be a torsor under the image $SL_{n}(k(\lambda))\times SL_{n}(k(\lambda'))$ of the geometric monodromy map because this is crucial to show the geometric connectivity of the moduli scheme.

Now choosing $t=0$, to construct an isomorphism between $\det\left(\left(\overline{r}\times \overline{r}_{l',\iota}(\pi)\right)\otimes\left(\overline{\chi}_1\times \overline{\chi}_2\right)\right)$ and $\det\left(V[\lambda]_t\times V[\lambda']_t\right)$,   amounts to taking an "$n$-th root" of the character $(\det V[\lambda]_0\times V[\lambda']_0)^{-1}\otimes\det(\overline{r}\times \overline{r}_{l',\iota}(\pi))$ as character valued in $k(\lambda)^\times\times k(\lambda')^\times$, where $V[\lambda]_0, V[\lambda']_0$ denotes the mod $l, l'$ cohomology of the fibre over $0$.    The first step to make this adjustment work is that we need 
\begin{itemize}
    \item $(\det V[\lambda]_0\times V[\lambda']_0)^{-1}\otimes\det(\overline{r}\times \overline{r}_{l',\iota}(\pi))$  has image in $(k(\lambda)^\times)^n\times (k(\lambda')^\times)^n$ as a $G_{F'}$ representation.
\end{itemize}
We remark that this condition is proved by a computation for the fibre over $0$, where there is a good description. The computation is done in \ref{npower}. Then, we will use Lemma \ref{2.1} to deduce that this condition above enables us to construct such an "$n$-th root" of character while also making sure that $F'$ satisfies certain linearly disjoint properties.

\ 

The second problem is that to apply ordinary automorphy lifting theorems, we also need to show $V_{\lambda, t}$ and $V_{\lambda', t}$ are both ordinary. 

The proof of $V_{\lambda', t}$ being ordinary is relatively easy. We just pick $t\in \A^1(F')$ that is $l'$-adicly close to $0$. Applying \ref{2.3}, we may check ordinarity via an examination of $D_{\mathrm{cris}}(V_{\lambda', t})$ . The comparison theorem identifies $D_{\mathrm{cris}}(V_{\lambda', t})$ and $D_{\mathrm{cris}}(V_{\lambda', 0})$, and hence  reduces the proof to the case $t=0$. In that case, $V_{\lambda', 0}$ actually splits into characters as a $G_{F'}$ representation.

To prove that $V_{\lambda, t}$ is ordinary is harder and relies heavily on the machinery of log geometry. This is the result of \cite{qian}. The idea is to choose $t\in \A^1(F')$ that has $l$-adic valuation $<0$. The intuition is that via the construction of a semistable model, the comparison theorems and an application of the log cristalline cohomology theory of Hyodo-Kato, we should have that  the operator $N$ acting on $D_{\mathrm{st}}(V_{\lambda, t})$ is identified with  the residue of the Gauss-Manin connection at $\infty$ of $V_B$ in the notation of Section \ref{3}, upon identifying the underlying space they act on. The latter is some sort of log of the monodromy around the point $\infty$ of $V_B\otimes\C$ and hence $N$ is maximally nilpotent because the monodromy is maximally unipotent by \ref{maxun}. Then ordinarity follows from a $p$-adic Hodge theoretic lemma.

The difference between the case of $l$ and $l'$ arises partially from the fact that $l$ is given but we may choose $l'$ arbitrarily as well.

\

Lastly the $\pi$ we use such that $r_{l',\iota}(\pi)$ is ordinarily automorphic is such that $r_{l',\iota}(\pi)$ is a symmetric tensor power of the Tate module of an elliptic curve over $\mathbb{Q}$.

\

With the above input and an awkward choice of the character $\chi$ of $H_{0}$, plus several technical algebraic number theory lemmas listed in Section 2, we will finally prove the theorem in Section 4.

\subsection*{Acknowledgement}
I would like to first thank Richard Taylor for encouraging me to think about the subject of this paper. I also want to thank him for all the helpful comments on the draft of this paper. I benefit a lot from the many interesting conversation with Richard Taylor, Brian Conrad, Weibo Fu, Ravi Vakil, Jun Su and Jack Sempliner during the preparation of this text.

\section{Several Lemmas}\label{2}

Let us first state the properties we will use throughout the paper regarding the notion of linearly disjoint fields. 
\begin{itemize}
    \item  If $A$ and $B$ are extensions of $C$ then $A$, $B$ linearly disjoint over $C$ implies $A
\cap B =C$, and the converse is true if $A$ or $B$ is finite Galois over $C$.

\item  If $A \supset B \supset C$ and $D\supset C$ with $A$ and $D$ linearly disjoint over $C$, then $A$
and $BD$ are linearly disjoint over $B$. In particular $A \cap BD = B$.

\end{itemize}

\begin{lemma}
\label{2.1}
 {\it For a CM} fi{\it eld} $F, a$ fi{\it nite CM Galois extension} $M/F, a$ fi{\it nite Galois extension} $F_{0}/\Q$ , a finite field $\mathbb{F}_{l^{r}}$ {\it containing all $n$-th roots of unity, and a character} $\chi$ : $G_{M}\rightarrow(\mathbb{F}_{l^{r}}^{\times})^{n}$, {\it there exists a} finite totally real Galois extension $L/\Q$ linearly disjoint with $F_0$ over $\Q$ and such that if we denote $F_{1}=LM$  , there exists a character $\psi$ : $G_{F_{1}}\rightarrow \mathbb{F}_{l^{r}}^{\times}$ {\it such that} $\psi^{n}=\chi|_{G_{F_{1}}}$. 

\end{lemma}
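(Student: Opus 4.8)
The plan is to build $\psi$ one "coordinate" at a time. Write $\chi = (\chi_1, \ldots, \chi_n)$ with each $\chi_i : G_M \to \mathbb{F}_{l^r}^\times$; since $\mathbb{F}_{l^r}^\times$ is cyclic of order $l^r - 1$ prime to $l$, each $\chi_i$ factors through a finite abelian quotient of $G_M$ of order prime to $l$, so by class field theory it corresponds to a finite abelian extension of $M$. The first reduction is therefore to the case $n = 1$: it suffices, given a single character $\chi_1 : G_M \to \mathbb{F}_{l^r}^\times$, to find $L$ (totally real, Galois over $\mathbb{Q}$, linearly disjoint from $F_0$) and $\psi_1 : G_{LM} \to \mathbb{F}_{l^r}^\times$ with $\psi_1^n = \chi_1|_{G_{LM}}$; then one handles the $n$ components successively, absorbing the previously-constructed $L$ into the "$F_0$ to be avoided" at each stage, and takes the compositum of all the resulting $L$'s at the end (still totally real, Galois over $\mathbb{Q}$, and linearly disjoint from $F_0$ by the second bullet on linearly disjoint fields). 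Note $\mathbb{F}_{l^r}^\times$ contains the $n$-th roots of unity, so extracting an $n$-th root of a given element is possible inside the group once one has enlarged the field over which the character lives.

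For the $n=1$ case the idea is: the obstruction to extracting an $n$-th root of a character is a class in $H^1$-type cohomology, and it dies after restricting to a suitable open subgroup. Concretely, let $d$ be the order of $\chi_1$, so $\chi_1$ cuts out a cyclic extension $M'/M$ of degree $d$ with $\gal(M'/M) \cong \mathbb{Z}/d$. We want a character $\psi_1$ of some $G_{F_1}$ with $\psi_1^n = \chi_1$; equivalently, after restricting $\chi_1$ to $G_{F_1}$, its order should be made to admit an $n$-th root in the sense that $\chi_1|_{G_{F_1}}$ should be an $n$-th power in the (pro)cyclic character group. The cleanest route: pick an auxiliary rational prime $q$ (with $q \equiv 1 \pmod{nd}$, say, which exists by Dirichlet) and inside $\mathbb{Q}(\zeta_q)$ — or a well-chosen real subfield of a cyclotomic field — find a cyclic totally real extension $L/\mathbb{Q}$ of degree divisible by $n$ with $L$ linearly disjoint from $F_0 \cdot M'$ over $\mathbb{Q}$ (possible since $F_0 M'$ is a fixed finite extension and we have infinitely many cyclotomic primes to choose from, giving infinitely many such $L$). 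Set $F_1 = LM$. Over $G_{F_1}$ we now have two independent sources of characters: $\chi_1|_{G_{F_1}}$ (of order still $d$, by linear disjointness), and characters of $\gal(F_1/M) \supseteq \gal(L/\mathbb{Q})$ of order $n$. Because the character group of $G_{F_1}$ valued in $\mathbb{F}_{l^r}^\times$ is a torsion abelian group in which we may now find an element of order $n$ that "interpolates" between $\chi_1$ and the trivial character, we can write $\chi_1|_{G_{F_1}} = \psi_1^n$ for a suitable $\psi_1$: explicitly, since $\gcd(d, \text{ord of the }L\text{-character})$ can be arranged and $n \mid [\text{that order}]$, one solves the congruence for exponents.

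The cleaner formulation that I would actually write: it suffices to make $\chi_1|_{G_{F_1}}$ factor through a procyclic quotient $\Delta$ of $G_{F_1}$ on which some generator maps to an $n$-th power in $\mathbb{F}_{l^r}^\times$ — and this can be forced by choosing $L$ so that $\gal(F_1/M)$ contains a cyclic direct factor of order exactly $n$ that $\chi_1|_{G_{F_1}}$ does not see (which follows from linear disjointness of $L$ and $M$), then twisting the tautological degree-$n$ character into $\chi_1|_{G_{F_1}}$ to split off an $n$-th root. Taking compositums handles general $n$-tuples. The main obstacle, and where care is needed, is twofold: first, arranging the totally real and linear-disjointness constraints on $L$ simultaneously — this is handled by using real cyclotomic fields $\mathbb{Q}(\zeta_q)^+$ for auxiliary primes $q$, of which there are infinitely many avoiding any fixed finite $F_0 M'$; second, checking that after restriction the order of $\chi_1$ does not drop in a way that obstructs the exponent arithmetic — but linear disjointness of $L$ from $M'$ over $\mathbb{Q}$ guarantees $\chi_1|_{G_{F_1}}$ still has order $d$, so the arithmetic of solving $\psi_1^n = \chi_1|_{G_{F_1}}$ in the character group $\mathrm{Hom}(G_{F_1}, \mathbb{F}_{l^r}^\times)$ goes through because that group now contains the requisite $n$-torsion coming from $\gal(L/\mathbb{Q})$. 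I expect the delicate bookkeeping to be entirely in making the single auxiliary prime $q$ (equivalently the single extension $L$, reused across all $n$ coordinates or chosen afresh for each) do all the jobs at once.
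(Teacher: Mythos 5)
There is a genuine gap, and it sits exactly at the step you wave through with ``one solves the congruence for exponents.'' Having extra $n$-torsion in $\mathrm{Hom}(G_{F_1},\mathbb{F}_{l^r}^\times)$ coming from a linearly disjoint cyclic $L$ does not make $\chi_1|_{G_{F_1}}$ an $n$-th power: in an abelian group of the shape $\mathbb{Z}/d\times\mathbb{Z}/n$ the element $(1,0)$ is an $n$-th power only when $\gcd(n,d)=1$, and adjoining an independent direct factor of characters never helps extract a root of the given one. The real obstruction is not ``$H^1$-type'': writing $n=l^am$ with $l\nmid m$, the $n$-th power map gives an exact sequence $1\to\mu_m\to\mathbb{F}_{l^r}^\times\to(\mathbb{F}_{l^r}^\times)^n\to 1$, and the existence of $\psi$ is governed by a class $\delta(\chi)\in H^2(G_M,\mathbb{Z}/m\mathbb{Z})$ which must die under restriction to $G_{F_1}$. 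That class has local components at finitely many places, and killing them forces conditions on the \emph{local} degrees of $F_1/M$ at those places (one needs $\zeta_m\in F_{1,w}$ and $m\mid[F_{1,w}:M_v(\zeta_m)]$ there, since restriction on $H^2(G_{M_v(\zeta_m)},\mu_m)\cong\frac1m\mathbb{Z}/\mathbb{Z}$ is multiplication by the local degree); a cyclic $L$ chosen only to be globally linearly disjoint and of degree divisible by $n$ can be completely split at exactly those places, in which case no $n$-th root exists over $F_1$ at all. This is why the paper's proof builds $L$ with prescribed local behavior (via Lemma 4.1.2 of \cite{CHT}) to force the obstruction into $\sha^2(F_2,\mathbb{Z}/m\mathbb{Z})$, and then needs a second, separate step---Poitou--Tate duality against $\sha^1(F_1,\mu_m)$ together with the Grunwald--Wang analysis of Lemma \ref{2.2} and a corestriction trick---to kill the globally-locally-trivial part. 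Nothing in your construction addresses either step, and the difficulty you flag (simultaneously arranging totally real and linear-disjointness constraints) is the easy part.

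A secondary issue: you read $(\mathbb{F}_{l^r}^\times)^n$ as an $n$-fold product and decompose $\chi$ into coordinates, but in the lemma it denotes the subgroup of $n$-th powers of $\mathbb{F}_{l^r}^\times$ (this is how it is used in the proof and in Section 4, e.g.\ for $\det V[\lambda]\otimes(\det\overline{r})^\vee$). Under your reading the statement is in fact false (a character of order $l^r-1$ admits no $n$-th root valued in $\mathbb{F}_{l^r}^\times$), so the hypothesis that the values are $n$-th powers is essential and cannot be discarded by a coordinate-wise reduction. Pointwise $n$-th roots of the values do exist, but choosing them compatibly so as to form a homomorphism is precisely the $H^2$ obstruction described above.
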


\begin{proof}
 Consider the long exact sequence associated to the following short exact sequence of $G_{M}$-module with trivial action:
 \[
\begin{tikzcd}
0\arrow[r]& \mathbb{Z}/m\mathbb{Z}\arrow[r]& \mathbb{F}_{l^{r}}^{\times}\arrow[r, "(\cdot)^n"]&(\mathbb{F}_{l^{r}}^{\times})^{m}\arrow[r]& 0
\end{tikzcd}
\]
where we write $n=l^{a}m, l\nmid m$, we have:
\[
\begin{tikzcd}
H^{1}(G_{M},\mathbb{F}_{l^{r}}^{\times})\arrow[r, "(\cdot)^n"]& H^{1}(G_{M},(\mathbb{F}_{l^{r}}^{\times})^{m})\arrow[r, "\delta"]&
H^{2}(G_{M},\mathbb{Z}/m\mathbb{Z})
\end{tikzcd}
\]

Now $\chi\in H^{1}(G_{M},\ (\mathbb{F}_{l^{r}}^{\times})^{m})$ . If we let $\tilde{\chi}=\delta(\chi)$ , we are reduced to finding a Galois CM extension $F_{1}\supset M$ of the form $F_1=LM$ for some $L$ linearly disjoint with $F_{0}$ over $\Q$ such that the obstruction $\tilde{\chi}$ is killed by the restriction map $H^{2}(G_{M},\mathbb{Z}/m\mathbb{Z})\rightarrow H^{2}(G_{F_{1}}, \mathbb{Z}/m\mathbb{Z})$.

Consider the map $H^{2}(G_{M}, \mathbb{Z}/m\mathbb{Z})\rightarrow\prod_{v}H^{2}(G_{M_{v}},\ \mathbb{Z}/m\mathbb{Z})$ given by restriction. The image actually lands in $\bigoplus_{v}H^{2}(G_{M_{v}}, \mathbb{Z}/m\mathbb{Z})$ because any element in $H^{2}(G_{M}, \mathbb{Z}/m\mathbb{Z})$ is inflated from some $\phi\in H^{2}(\gal(M'/M), \mathbb{Z}/m\mathbb{Z})$ for some $M'/M$ a finite extension and for those primes $v$ of $M$ that is unramified in $M'$, the image of $\phi$ in $H^{2} (G_{M_{v}}, \mathbb{Z}/m\mathbb{Z})$ by restriction actually lands in $H^{2}(\gal(M_{v}^{\mathrm{n}\mathrm{r}}/M_{v}), \mathbb{Z}/m\mathbb{Z})$ , which is $0$ since the cohomological dimension of $\hat{\mathbb{Z}}$ is 1.

\

The first step is to take an CM extension $F_{2}/M$ that is of the form $L_2M$ for a totally real $L_2$ Galois over $\Q$ that is linearly disjoint with $F_{0}$ over $\Q$, such that in the following commutative diagram, the image of $\tilde{\chi}$ in the upper right corner is $0$:
\[
\begin{tikzcd}
H^{2}(G_{F_{2}}, \mathbb{Z}/m\mathbb{Z})\arrow[r]&
\bigoplus_{w}H^{2}(G_{F_{2,w}}, \mathbb{Z}/m\mathbb{Z})\\
H^{2}(G_{M}, \mathbb{Z}/m\mathbb{Z})\arrow[r]\arrow[u]&
\bigoplus_{v}H^{2}(G_{M_{v}}, \mathbb{Z}/m\mathbb{Z})\arrow[u]
\end{tikzcd}
\]

Let $\bigoplus\tilde{\chi}_{v}$ be the image of $\tilde{\chi}$ in $\bigoplus_{v}H^{2}(G_{M_{v}}, \mathbb{Z}/m\mathbb{Z})$ . If we can take a CM extension $F_{2}/M$ of the above form such that for any $v$ with $\tilde{\chi}_{v}\neq 0$ and $w|v$ a place of $F_{2}$, $\zeta_{m}\in F_{2,w}$ and $m|[F_{2,w}: M_{v}(\zeta_{m})]$, then the image of $\tilde{\chi}_{v}$ restricting to $H^{2}(G_{F_{2,w}}, \mathbb{Z}/m\mathbb{Z}) $ is 0, since $H^{2}(G_{M_{v}(\zeta_{m})},\mathbb{Z}/m\mathbb{Z})\cong H^{2}(G_{M_{v}(\zeta_{m})},\mu_{m})\cong\frac{1}{m}\mathbb{Z}/\mathbb{Z}$, and the restriction map $\frac{1}{m}\mathbb{Z}/\mathbb{Z}\cong H^{2}(G_{M_{v}(\zeta_{m})}, \mu_{m})\rightarrow H^{2}(G_{F_{2,w}},\mu_{m})\cong\frac{1}{m}\mathbb{Z}/\mathbb{Z}$ is multiplication by $[F_{2,w}:M_{v}(\zeta_{m})]$. 

We can construct such an extension $F_{2}/M$ coming from $L_2/\Q$ linearly disjoint with $F_{0}$ over $\Q$  with prescribed local behavior for a finite number of primes $v$ of $M$  as the following:

Let $S_{1}$ be the set of rational primes lying under the primes $v$ of $M$ such that $\widetilde{\chi}_{v}\neq 0$. Let $S_{2}=$\{$\infty$\} and $S=S_{1}\cup S_{2}$. For each $q\in S_{1}$, let $M_{q}$ denote the composite of the image of all embeddings $\tau$ : $M\inj\overline{\Q}_{q}$. We fix an extension $E_{q}/M_{q}(\zeta_{m})$ of order divisible by $m$ and Galois over $\Q_q$. Now we apply Lemma 4.1.2 of \cite{CHT} to $F_0/\Q$ and the set of primes $S$ with prescribed local behavior:

\begin{itemize}

\item $E_{q}/\Q_{q}$ for all $q\in S_{1}$

\item Trivial extension $\R/\R$ for $\infty\in S_{2}$
 
 \end{itemize}

We get a finite totally real Galois extension $L_2/\Q$ that is linearly disjoint with $F_0$ over $\Q$ and that for any $w$ a place of $L_2$ over a $q\in S_1$, $(L_2)_{w}\cong E_q$ we defined above over $\Q_q$. Take $F_{2}=ML_2$. For any prime $v$ of the field $M$ with $\tilde{\chi}_{v}\neq 0$ and $v'$ a prime of $F_{2}$ over $v$, then $q=v'_{|\Q}\in S_{1}$, thus $F_{2,v'}\supset (L_2)_{v'\mid_{L_2}}\supset E_{q}$ and so $\gal(F_{2,v'}/M_{v}(\zeta_{m}))$ is of order divisible by $m$. So we have constructed the desired $L_2$ and $F_2$.

\ 

Now for any number field $F$ and $G_{F}$ module $A$, let $\sha^{i}(F, A)$ be
$$
\mathrm{k}\mathrm{e}\mathrm{r}(H^{i}(G_{F}, A)\rightarrow\prod_{v}H^{i}(G_{F_{v}}, A))
$$
where the product is over all places $v$ of $F$ (so is every product that follows).

Thus the first step yields a finite CM Galois extension $F_{2}/F$ containing $M$ that comes from some $L_2/\Q$ as described above such that the image $\tilde{\chi}_{1}$ of  $\tilde{\chi}$ in $H^{2}(G_{F_{2}}, \mathbb{Z}/m\mathbb{Z})$ actually lies in $\sha^{2}(F_{2}, \mathbb{Z}/m\mathbb{Z})$ .

\

The second step is to analyze $\sha^{2}(F_{2}, \mathbb{Z}/m\mathbb{Z})$ and kill it after some further CM extension $F_{1}/F_{2}$ where $F_1=L_1L_2M$, with $L_1$ totally real Galois over $\Q$ that would be specified later and such that $L:=L_1L_2$ is linearly disjoint with $F_0$ over $\Q$.

Poitou-Tate duality(cf. \cite{Neu} Theorem 8.6.7) gives a perfect pairing
$$
\langle\cdot,\ \cdot\rangle:\sha^{2}(F_{2}, \mathbb{Z}/m\mathbb{Z})\times \sha^{1}(F_{2}, \mu_{m})\rightarrow \mathbb{Q}/\mathbb{Z}
$$
satisfying the following compatibility for any finite extension $F_{1}/F_{2}$ and $x\in \sha^{1}(F_{1}, \mu_{m})$, $y\in \sha^{2}(F_{2}, \mathbb{Z}/m\mathbb{Z})$ :
$$
\langle x, {\rm Res}(y)\rangle=\langle \mathrm{C}\mathrm{o}\mathrm{r}(x), y\rangle
$$
So we now choose such an extension $F_{1}/F_{2}$ such that $\mathrm{C}\mathrm{o}\mathrm{r}(x)=0$, $\forall x\in \sha^{1}(F_{1}, \mu_{m})$, then by the perfectness, ${\rm Res}(y)=0$, $\forall y\in \sha^{2}(F_{2}, \mathbb{Z}/m\mathbb{Z})$.

Write $m=2^{r} \prod_{i=1}^{s}p_{i}^{r_{i}}$. Decompose $\sha^{1}(F_{2}, \mu_{m})=\sha^{1}(F_{2}, \mu_{2^{r}})\times\prod_{i=1}^{s}\sha^{1}(F_{2}, \mu_{p_{i}^{r_{i}}})$. The following lemma is basically Theorem 9.1.9 of \cite{Neuk}.

\begin{lemma}
\label{2.2}
 {\it For any number} field $F$, $\sha^{1}(F, \mu_{p^{r}})=0$  or  $\mathbb{Z}/2\mathbb{Z}$. {\it The later case could happen only when} $p=2$.

{\it In any case}, $\sha^{1}(F, \mu_{p^{r}}) =\sha^{1}(F(\mu_{p^{r}})/F, \mu_{p^{r}})$ ({\it de}fi{\it ned in the proof}) .

\end{lemma}

\begin{proof}
 Set $K=F(\mu_{p^{r}})$ . We have the following commutative diagram where each row and column (except the left column) are exact:
\[
\begin{tikzcd}
&0\arrow[r]
&H^{1}(G_{K}, \mu_{p^{r}})\arrow[r]
&\prod_{w}H^{1}(G_{K_{\omega}}, \mu_{p^{r}})\\
0\arrow[r] 
&\sha^{1}(F, \mu_{p^{r}})\arrow[r]\arrow[u]
&H^{1}(G_{F}, \mu_{p^{r}})\arrow[r]\arrow[u]
&\prod_{v}H^{1}(G_{F_{v}}, \mu_{p^{r}})\arrow[u]\\
0\arrow[r]
&\sha^{1}(K/F, \mu_{p^{r}})\arrow[r]\arrow[u, hook]
&H^{1}(\gal(K/F), \mu_{p^{r}})\arrow[r]\arrow[u, hook]
&\prod_{v}H^{1}(\gal(K_{w}/F_{v}), \mu_{p^{r}})\arrow[u, hook]
\end{tikzcd}
\]

where $\sha^{1} (K/F, \mu_{p^{r}})$ is defined by the exactness of the bottom row and the top row is exact because an element in the kernel corresponds to an cyclic extension of $K$ of order dividing $p^r$ that splits at all primes $w$ of $K$, which has to be trivial. (Again $\mu_{p^r}=\Z/p^r\Z$ as a $G_{K}$ module and $H^1$ is just $\mathrm{Hom}$.)

A diagram chasing gives that $\sha^{1} (F, \mu_{p^{r}}) =\sha^{1}(K/F, \mu_{p^{r}})$. By Proposition 9.1.6 of \cite{Neuk}, $H^{1} (\mathrm{G}\mathrm{a}\mathrm{l}(K/F), \mu_{p^{r}}) =0$ except when
\begin{itemize}
    \item  $p=2$, $r\geq 2$
    \item  and $-1$ is in the image of $\mathrm{G}\mathrm{a}\mathrm{l}(K/F)\rightarrow(\mathbb{Z}/2^{r}\mathbb{Z})^{\times}$
\end{itemize}

In this case, $H^{1} (\mathrm{G}\mathrm{a}\mathrm{l}(K/F), \mu_{2^{r}})=\mathbb{Z}/2\mathbb{Z}$. As a subspace of $H^{1}(\mathrm{G}\mathrm{a}\mathrm{l}(K/F), \mu_{2^{r}})$, $\sha^{1} (F, \mu_{p^{r}}) =\sha^{1}(K/F, \mu_{p^{r}})=0$ or $\mathbb{Z}/2\mathbb{Z}$.

\end{proof}

Recall the relation $\mathrm{Cor}\circ{\rm Res}=[F_{1}:F_{2}]$ and the commutative diagram:
\[
\begin{tikzcd}
H^{1}(G_{F_{2}}, \mu_{2^{r}})\arrow[r, "\mathrm{Res}"]
&H^{1}(G_{F_{1}}, \mu_{2^{r}})\\
H^{1} (\mathrm{G}\mathrm{a}\mathrm{l}(F_{2}(\mu_{2^{r}})/F_{2}), \mu_{2^{r}}) \arrow[r, "\mathrm{Res}"]\arrow[u, hook]
&H^{1}(\mathrm{G}\mathrm{a}\mathrm{l}(F_{1}(\mu_{2^{r}})/F_{1}), \mu_{2^{r}})\arrow[u, hook]
\end{tikzcd}
\]

The bottom row is an isomorphism if we pick $F_{1}$ linearly disjoint with $F_{2}(\mu_{2^{r}})$ over $F_{2}$. If this is the case and $2\mid [F_{1}:F_{2}]$, then by Lemma \ref{2.2}
\begin{equation}
\begin{split}
\mathrm{Cor}(\sha^{1}(F_{1}, \mu_{m}))&=\mathrm{C}\mathrm{o}\mathrm{r}(\sha^{1}(F_{1}, \mu_{2^{r}}))\\
&=\mathrm{C}\mathrm{o}\mathrm{r}(\sha^{1}(F_{1}(\mu_{2^{r}})/F_1, \mu_{2^{r}}))\\
&\subset \mathrm{C}\mathrm{o}\mathrm{r}(H^{1}(\mathrm{G}\mathrm{a}\mathrm{l}(F_{1}(\mu_{2^{r}})/F_{1}), \mu_{2^{r}}))\\
&=\mathrm{C}\mathrm{o}\mathrm{r}({\rm Res}(H^{1}(\mathrm{G}\mathrm{a}\mathrm{l}(F_{2}(\mu_{2^{r}})/F_{2}), \mu_{2^{r}})))\\
&=[F_{1}:F_{2}]\cdot H^{1}(\mathrm{G}\mathrm{a}\mathrm{l}(F_{2}(\mu_{2^{r}})/F_{2}), \mu_{2^{r}})\\
&=0
\label{cor}
\end{split}
\end{equation}

Here when we apply $\Cor$ to some group, we always mean $\Cor$ applied to the image of this group in $H^1(G_{F_1}, \mu_{2^r})$.

Now we construct an $L_1$ such that the associated extension $F_{1}/F_{2}$ satisfies the property that $F_{1}$ is linearly disjoint with $F_{2}(\mu_{2^{r}})$ over $F_{2}$ and $2\mid [F_{1}:F_{2}]$. Choose a rational prime $p$ and a local field $E_{p}$ Galois over $\Q_p$ that contains $M'_p$, the composite of the image of all embeddings $\tau: F_2\inj \overline{\Q}_p$, and is of order divisible by $2$ over it. We again apply Lemma 4.1.2 of \cite{CHT} to the extension $F_0F_2(\mu_{2^r})/\Q$ and the set of primes $S$ consisting of $p$ and $\infty$ with prescribed local behavior:   
\begin{itemize}
    \item $E_{p}/\Q_p$
    \item Trivial extension $\R/\R$ for the place $\infty$
\end{itemize}

We get a totally real Galois extension $L_1/\Q$ linearly disjoint with $F_0F_2(\mu_{2^r})$ over $\Q$. The associated $F_1=F_2L_1=ML_2L_1$. Then $2\mid [F_1:F_2]$ because for any place $v$ of $F_1$ above $p$, $(F_1)_v\supset (L_1)_{v\mid_{L_1}}\cong E_p\supset (F_2)_{v\mid_{F_2}}$ and the last inclusion is of order divisible by $2$. The property that $F_1=F_2L_1$ and $F_2(\mu_{2^r})$ are linearly disjoint over $F_2$ follows from the fact that $L_1$ and $F_2(\mu_{2^r})$ are linearly disjoint over $\Q$.  Now $L_1$ and $F_0F_2$ are linearly disjoint  over $\Q$ implies that $L_1L_2$ and $F_0F_2$ are linearly disjoint over $L_2$. Hence  $L_{1}L_2\cap F_{0}=L_1L_2\cap F_{0}F_{2}\cap F_{0}=L_{2}\cap F_{0}=\Q$.

We conclude that the image of $\tilde{\chi}$ in $H^{2}(G_{F_{1}}, \mathbb{Z}/m\mathbb{Z})$ is $0$ by (\ref{cor}) and thus we can take an $n$-th root of $\chi|_{G_{F_{1}}}$ for $F_{1}=LM\supset M$ and $L=L_1L_2$ we constructed above finite totally real Galois over $\Q$ and linearly disjoint with $F_{0}$ over $\Q$.

\end{proof}

\begin{lemma}
\label{2N}
 {\it Let $l$ be a rational prime. Given any positive integer} $s$ {\it and a finite set of rational primes} $S$, {\it we can} fi{\it nd a positive integer} $N$ {\it not divisible by any primes in} $S$ {\it and} $l$, {\it and satisfying}:

\begin{itemize}
    \item  {\it Let} $r$ {\it be the smallest positive integer such that} $N\mid l^{r}-1$, {\it then} $s\mid r.$

    \item {\it When} $r$ {\it is even}, $N\nmid l^{r/2}+1$

 \end{itemize}
\end{lemma}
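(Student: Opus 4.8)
The plan is to take $N$ to be a product of two distinct odd primes, $N=q_1q_2$, chosen via primitive prime divisors. The prime $q_1$ will be picked so that $l$ has \emph{odd} multiplicative order modulo $q_1$; this is exactly what will force the second bulleted condition. The prime $q_2$ will be picked so that $l$ has order divisible by $s$ modulo $q_2$; this forces the first bulleted condition.

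Recall Zsygmondy's theorem: for every integer $m\geq 3$ with $m\neq 6$, and also for $m=6$ unless $l=2$, the integer $l^m-1$ has a primitive prime divisor, i.e. a prime $q$ with $\mathrm{ord}_q(l)=m$ (this can also be seen directly by analysing the prime factorisation of $\Phi_m(l)$, whose prime divisors $q$ with $q\nmid m$ have $\mathrm{ord}_q(l)=m$, and which exceeds the largest prime factor of $m$ once $m$ is large). Since the multiplicative order of $l$ modulo a primitive prime divisor of $l^m-1$ equals $m$, the primitive prime divisors attached to distinct exponents $m$ are distinct primes. Letting $m$ range over odd integers $\geq 3$, only finitely many of the corresponding primitive prime divisors lie in the finite set $S$; hence I may fix an odd integer $a\geq 3$ together with a prime $q_1$ with $\mathrm{ord}_{q_1}(l)=a$ and $q_1\notin S$. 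Similarly, letting $m$ range over the multiples of $s$ that are $\geq 3$ and $\neq 6$, I may fix such an integer $b$ (so $s\mid b$), with $b\neq a$, together with a prime $q_2$ with $\mathrm{ord}_{q_2}(l)=b$, $q_2\notin S$, and $q_2\neq q_1$ (the last because $\mathrm{ord}_{q_2}(l)=b\neq a=\mathrm{ord}_{q_1}(l)$). Note $q_1,q_2\neq l$ automatically, and $q_1,q_2\neq 2$ since the order of $l$ modulo $2$ is at most $1$ when $l$ is odd and $2\nmid l^m-1$ when $l$ is even. Put $N:=q_1q_2$.

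It remains to verify the requirements. By construction $N$ is coprime to $l$ and to all primes of $S$. Since $N$ is squarefree, $r:=\mathrm{ord}_N(l)=\mathrm{lcm}\bigl(\mathrm{ord}_{q_1}(l),\mathrm{ord}_{q_2}(l)\bigr)=\mathrm{lcm}(a,b)$, which is divisible by $b$ and hence by $s$; this is the first bullet. For the second bullet, suppose $r$ is even. Since $a$ is odd and $a\mid r$, $a$ divides the odd part of $r$, which divides $r/2$; hence $l^{r/2}\equiv 1\pmod{q_1}$, i.e. $q_1\mid l^{r/2}-1$. If $q_1$ also divided $l^{r/2}+1$ it would divide $(l^{r/2}+1)-(l^{r/2}-1)=2$, impossible since $q_1$ is odd; so $q_1\nmid l^{r/2}+1$, whence $N=q_1q_2\nmid l^{r/2}+1$.

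The one point to be alert about — and the reason $N$ cannot be taken to be a single prime — is precisely the second condition: if $N=q$ were prime with $r=\mathrm{ord}_q(l)$ even, then $q$ divides $l^r-1=(l^{r/2}-1)(l^{r/2}+1)$ but not $l^{r/2}-1$, so $q\mid l^{r/2}+1$, violating the requirement. The auxiliary prime $q_1$ of odd order is introduced exactly to obstruct this; everything else is bookkeeping with Zsygmondy's theorem to avoid the primes of $S$ and the finitely many exceptional exponents.
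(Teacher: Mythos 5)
Your proof is correct, and it takes a genuinely different route from the paper. You build $N=q_1q_2$ out of two primitive prime divisors supplied by Zsygmondy's theorem: $q_1$ of odd order $a$ (which, since the odd part of $r$ divides $r/2$ whenever $r$ is even, forces $q_1\mid l^{r/2}-1$ and hence $N\nmid l^{r/2}+1$), and $q_2$ of order $b$ divisible by $s$ (which forces $s\mid r=\mathrm{lcm}(a,b)$); avoiding $S$ is free because distinct exponents give distinct primitive primes and $S$ is finite. The paper instead argues by hand, prime by prime in the factorization $s=2^{a_0}\prod p_i^{a_i}$: for each odd $p_i$ it extracts a prime $M_i$ dividing $l^{p_i^{t_i-1}(p_i-1)}+\cdots+l^{p_i^{t_i-1}}+1$ of order exactly $p_i^{t_i}$, dodging $S$ by taking $t_i$ so large that every prime of $S$ either never divides $l^{p_i^k}-1$ or already divides $l^{p_i^{t_i-2}}-1$, and it handles the $2$-part with the two-prime factor $M_0=AB$, $A\mid l^{2^{t_0-2}}+1$, $B\mid l^{2^{t_0-1}}+1$ (the analogue of your auxiliary $q_1$, needed exactly when $s$ is a $2$-power). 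The paper's construction is elementary (it effectively reproves the needed instances of Zsygmondy via explicit congruences) and pins down $r$ exactly as $2^{t_0}\prod p_i^{t_i}$, while your argument is shorter and uniform in all cases at the cost of invoking Zsygmondy as a black box; since the statement only asks for $s\mid r$ and avoidance of $S\cup\{l\}$, that is all you need. Your closing remark on why $N$ cannot be a single prime when $r$ is even correctly identifies the same obstruction that forces the paper's $M_0$ to be a product of two primes.
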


\begin{proof}
 Factorize $s$ as $s=2^{a_{0}}\displaystyle \prod_{i=1}^{m}p_{i}^{a_{i}}$. View $p_{0}=2$. We will construct a sequence of pairwise coprime integers $M_{i}$ (not divisible by any rational prime in $S$) and a sequence of integers $t_{i}$ with $t_{i}\geq a_{i}$ for $i=0,1,\ldots, m$, such that $M_{i}\mid l^{r}-1$ if and only if $p_{i}^{t_{i}}\mid r$. Set $N=\displaystyle \prod_{i=0}^{m}M_{i}$ and consider the order of $l$ in $(\mathbb{Z}/N\mathbb{Z})^{\times}\cong \displaystyle \prod_{i=1}^{m}(\mathbb{Z}/M_{i}\mathbb{Z})^{\times}$, we see that the first condition is satisfied. For the second condition (if $a_{0}>0$), we need to make $M_{0}$ satisfy the following extra property:
$$
M_{0}\nmid l^{2^{t_{0}-1}}+1
$$
Now we work on $i=0$ first. Take $t_{0}>a_{0}$ large enough such that $t_{0}>2$ and for each rational prime $q\in S\cup\{2\}$, one of the following holds:

(1) $q \nmid l^{2^{k}}-1$ for any $k>0$

(2) $q\mid l^{2^{t_{0}-3}}-1$

The fact $t_{0}>2$ gives $l^{2^{t_{0}-2}}+1\equiv 2$ mod $4$ and $l^{2^{t_{0}-1}}+1\equiv 2$ mod $4$ (or are both odd when $l=2$). Thus we may choose  an odd prime divisor $ A$ of $l^{2^{t_{0}-2}}+1$ and  an odd prime divisor $B$ of $l^{2^{t_{0}-1}}+1$. We deduce that
$$
AB\mid l^{2^{t_{0}}}-1,\ B\nmid l^{2^{t_{0}-1}}-1,\ A\nmid l^{2^{t_{0-}1}}+1
$$
Take $M_{0}=AB$. Thus the smallest $r$ such that $M_{0}\mid l^{r}-1$ is $2^{t_0}$ and $M_{0}\nmid l^{2^{t_{0}-1}}+1$. Also, for $q\in S$, if $q\mid M_{0}$, then $q\mid l^{2^{t_{0}-2}}+1$ or $q\mid l^{2^{t_{0}-1}}+1$. In either case (1) won't happen, so $q\mid l^{2^{t_{0}-3}}-1$. Thus $q=2$. And this gives a contradiction with $AB$ being odd. We have constructed an $M_{0}$ with the property stated above.

Now we inductively construct $M_{i}$ and $t_{i}\geq a_{i}$ such that
\begin{itemize}
    \item  $M_{i}$ is not divisible by any rational primes in $S_{i}=\{p_{i}\}\cup S \cup \{$rational prime divisors of $M_{j}$ for $j<i\}\cup\{l\}\cup \{2\}$.
    \item The order of $l$ in $(\mathbb{Z}/M_{i}\mathbb{Z})^{\times}$ is $p_{i}^{t_{i}}$.
\end{itemize}

Choose $t_{i}>a_{i}$ large enough, such that $l^{p_i^{t_i-2}}>p_{i}$ and for each rational prime $q\in S_{i}$, one of the following holds:
\begin{enumerate}
    \item  $q\nmid l^{p_{i}^{k}}-1$ for any $k>0$
    \item $q\mid l^{p_{i}^{t_{i}-2}}-1$
\end{enumerate}

If $q\in S_{i}$ and $q\mid l^{p_{i}^{t_{i}-1}(p_i-1)}+\ldots +l^{p_{i}^{t_{i}-1}}+1$, then $q\mid l^{p_{i}^{t_{i}}}-1$ and so (1) cannot hold. Hence $q\mid l^{p_{i}^{t_{i}-2}}-1$. Thus $l^{p_{i}^{t_{i}-1}(p_{i}-1)}+\ldots +l^{p_{i}^{t_{i}-1}}+1\equiv p_{i}$ mod $q$. We see $q=p_{i}$. In this case, $l^{p_{i}^{t_{i}-1}}\equiv (l^{p_{i}^{t_{i}-2}})^{p_i}\equiv (1+p_iu)^{p_i}\equiv 1$ mod $p_{i}^{2}$ for some integer $u$ , so $l^{p_i^{t_{i}-1}(p_{i}-1)}+\ldots +l^{p_{i}^{t_{i}-1}}+1\equiv p_{i}$ mod $p_{i}^{2}$ and $p_{i}^{2}\nmid l^{p_{i}^{t_{i}-1}(p_{i}-1)}+\ldots +l^{p_{i}^{t_{i}-1}}+1$. Thus, the only prime in $S_{i}$ that divides $l^{p_{i}^{t_{i}-1}(p_{i}-1)}+\ldots +l^{p_{i}^{t_{i}-1}}+1$ is $p_{i}$ and only to the first order. We may take an odd prime divisor $M_{i}$ of $l^{p_{i}^{t_{i}-1}(p_{i}-1)}+\ldots +l^{p_{i}^{t_{i}-1}}+1$($>p_i$) with $M_{i}\not\in S_{i}$. Now $M_{i}\mid l^{p_{i}^{t_{i}}}-1$, but if $M_{i}\mid l^{p_i^{t_i-1}}-1$, then $l^{p_{i}^{t_{i}-1}(p_{i}-1)}+\ldots +l^{p_{i}^{t_{i}-1}}+1\equiv p_{i}$ mod $M_{i}$. So $M_{i}=p_{i}$ giving a contradiction. The two condition on $M_{i}$ is thus satisfied.

Now take $N=\prod_{i=0}^{m}M_{i}$ as promised. The smallest positive integer $r$ such that $N\mid l^{r}-1$ is $2^{t_0}\prod_{i=1}^{m}p_{i}^{t_{i}}$, a multiple of $s$. For the second condition, if $m>0$, i.e. there are odd prime divisors of $s$, then $M_{1}\mid l^{2^{t-1}\prod_{i=1}^{m}p_{i}^{t_{i}}}-1$ yields a contradiction with $N\mid l^{r/2}+1$. If $m=0$, then the construction stops at the first step and we have seen that $M_{0}=N\nmid l^{2^{t_{0}-1}}+1$.

\end{proof}

The following lemmas are taken from \cite{BLGHT} Lemma 2.2 with a little modification for the second one. These lemma will be used to prove that certain reprsentations coming from the Dwork motive are ordinary.

\begin{lemma}
\label{2.3}
  {\it Suppose that a} $\in(\mathbb{Z}^{n})^{\mathrm{H}\mathrm{o}\mathrm{m}(F,\overline{\mathbb{Q}}_{l}),+}$ {\it and that}
$$
r: \gal(\overline{F}/F)\rightarrow GL_{n}(\overline{\mathbb{Q}}_{l})
$$
{\it is crystalline at all primes} $v\mid l$. {\it We think of} $v$ {\it as a valuation} $v$ : $F_{v}^{\times}\twoheadrightarrow \mathbb{Z}$. {\it If} $\tau$: $F\rightarrow\overline{\mathbb{Q}_{l}}$ {\it lies above v, suppose that}
$$
\dim_{\overline{\mathbb{Q}}_{l}}\gr^{i}(r\otimes_{\tau,F_{v}}B_{\mathrm{D}\mathrm{R}})^{\gal(\overline{F}_{v}/F_{v})}=0
$$
{\it unless} $i=a_{\tau,j}+n-j$ {\it for some} $j=1,\ldots,n$, {\it in which case}
$$
\dim_{\overline{\mathbb{Q}}_{l}}\gr^{i}(r\otimes_{\tau,F_{v}}B_{\mathrm{D}\mathrm{R}})^{\gal(\overline{F}_{v}/F_{v})}=1
$$
{\it For} $v\mid l$, {\it let} $\alpha_{v,1},\ldots, \alpha_{v,n}$ {\it denote the roots of the characteristic polynomial of} $\phi^{[F_{v}^{0}:\mathbb{Q}_{l}]}$ {\it on}
$$
(r\otimes_{\tau,F_{v}^{0}}B_{\mathrm{c}\mathrm{r}\mathrm{i}\mathrm{s}})^{\gal(\overline{F}_{v}/F_{v})}
$$
{\it for any} $\tau: F_{v}^{0}\inj\overline{\mathbb{Q}}_{l}$. (Here $F_v^0$ is the maximal unramified subextension in $F_v$. {\it This characteristic polynomial is independent of the choice of} $\tau$.) {\it Let} $\mathrm{v}\mathrm{a}\mathrm{l}_{v}$ {\it denote the valuation on} $\overline{\mathbb{Q}}_{l}$ {\it normalized by} $\vl_{v}(l)=v(l)$. ({\it Thus} $\mathrm{v}\mathrm{a}\mathrm{l}_{v}\circ\tau=v$ {\it for any} $\tau:F_{v}\inj\overline{\mathbb{Q}}_{l}$.) {\it Arrange the} $\alpha_{v,i}$'{\it s such that}
$$
\vl_{v}(\alpha_{v,1})\geq \vl_{v}(\alpha_{v,2})\geq\ldots\geq \vl_{v}(\alpha_{v,n})
$$
{\it Then} $r$ {\it is ordinary of weight} $a$ {\it if and only if for all} $v\mid l$ {\it and all} $i=1,\ldots, n$ {\it we have}
$$
\mathrm{v}\mathrm{a}\mathrm{l}_{v}(\alpha_{v,i})=\sum_{\tau}(a_{\tau,i}+n-i)
$$
{\it where} $\tau$ {\it runs over embedding} $F\inj\overline{\mathbb{Q}}_{l}$ {\it above v}.
\end{lemma}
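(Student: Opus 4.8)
The plan is to localise at each $v \mid l$ --- both ``$r$ is ordinary of weight $a$'' and the displayed condition on the $\alpha_{v,i}$ involve only $r\vert_{G_{F_v}}$ --- and to compare the Galois-theoretic notion of ordinarity with the linear algebra of the weakly admissible filtered $\phi$-module attached to $r\vert_{G_{F_v}}$. Fix $v \mid l$, set $f = [F_v^0 : \Q_l]$, and let $D = (r \otimes_{\Q_l} B_{\mathrm{cris}})^{\gal(\overline{F}_v / F_v)}$, a free $F_v^0 \otimes_{\Q_l}\overline{\Q}_l$-module of rank $n$ with $\phi$-semilinear Frobenius $\phi$ ($\phi$ lying over the arithmetic Frobenius of $F_v^0/\Q_l$), whose base change to $F_v$ decomposes as $\prod_{\tau : F_v \inj \overline{\Q}_l} D_\tau$ with each $D_\tau$ filtered so that its jumps are exactly $a_{\tau,1}+n-1 > a_{\tau,2}+n-2 > \cdots > a_{\tau,n}$; this last is precisely the hypothesis that the graded pieces have dimension $1$ at those integers and $0$ elsewhere. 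By Colmez--Fontaine, $V \mapsto D_{\mathrm{cris}}(V)$ is an exact tensor equivalence from crystalline $G_{F_v}$-representations to weakly admissible filtered $\phi$-modules over $F_v$, and $\{\alpha_{v,1}, \dots, \alpha_{v,n}\}$ is the multiset of eigenvalues of the $\overline{\Q}_l$-linear operator $\phi^f$ on each $\tau_0$-component of $D$. My intermediate claim is that both conditions in the lemma are equivalent to: \emph{$D$ admits a $\phi$-stable complete flag $0 = D_0 \subset D_1 \subset \cdots \subset D_n = D$ each step of which is a sub-filtered-$\phi$-module, with $D_k/D_{k-1}$ of rank one, Hodge jump $a_{\tau, n-k+1}+k-1$ at each $\tau$, and Frobenius slope $\sum_{\tau \mid v}(a_{\tau,n-k+1}+k-1)$.}

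First I would check ``$r$ ordinary of weight $a$'' $\Leftrightarrow$ this flag condition. If $r\vert_{G_{F_v}}$ has the upper-triangular shape of the definition it is an iterated extension of characters $\psi_1, \dots, \psi_n$; since the category of crystalline representations is closed under subquotients each $\psi_i$ is crystalline, and applying the exact functor $D_{\mathrm{cris}}$ to the Galois flag produces a flag of $D$ of the required shape. For a rank-one crystalline module weak admissibility forces $t_N = t_H$, so the Frobenius slope of $D_{\mathrm{cris}}(\psi_i)$ is the sum over $\tau \mid v$ of the Hodge--Tate weights of $\psi_i$, which by the explicit inertial formula equals $\sum_\tau(a_{\tau,n-i+1}+i-1)$. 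Conversely, a flag as above transports through the equivalence to a $G_{F_v}$-stable flag of $r\vert_{G_{F_v}}$ with rank-one crystalline graded pieces $\psi_i$; the Hodge--Tate weights $(a_{\tau,n-i+1}+i-1)_\tau$ pin down $\psi_i\vert_{I_{F_v}}$ up to an unramified character by Lubin--Tate theory for $F_v$, and a direct check identifies this with the character in the definition of ordinarity with $\lambda = a$.

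Next I would check the flag condition $\Leftrightarrow$ the displayed valuation condition. Weak admissibility of $D$ forces the Newton polygon of $(D,\phi)$, whose slopes are the $\vl_v(\alpha_{v,i})$, to lie on or above the Hodge polygon of $D$, with the same right endpoint $\sum_{\tau \mid v}\sum_j (a_{\tau,j}+n-j)$; since the Hodge polygon has the \emph{distinct} slopes $\sum_\tau(a_{\tau,n}) < \cdots < \sum_\tau(a_{\tau,1}+n-1)$ --- here the regularity of the weights is used --- the displayed condition says exactly that the two polygons coincide. Granting that, for each $k$ the span $D_k$ of the generalised $\phi^f$-eigenspaces for the $k$ smallest Frobenius valuations is a sub-$\phi$-module with $t_N(D_k) = \sum_{\tau \mid v}(\text{sum of the } k \text{ smallest Hodge jumps at }\tau)$, whereas $t_H(D_k) \geq$ that same quantity because the induced filtration jumps on $D_k$ form a size-$k$ sub-multiset of the jumps on $D$; weak admissibility then forces $t_H(D_k) = t_N(D_k)$, so $D_k$ is weakly admissible with induced Hodge jumps at each $\tau$ equal to the $k$ smallest, and passing to successive quotients produces the flag with the stated numerics. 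Reading off $\vl_v(\alpha_{v,i})$ from such a flag and re-ordering by the regularity of the weights gives the converse implication, finishing the equivalences and hence the lemma.

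The step I expect to demand the most care is this converse direction, and within it the bookkeeping of normalisations: the factor $f = [F_v^0:\Q_l]$ relating $\phi$ to $\phi^f$, the normalisation $\vl_v(l) = v(l)$ against the slope normalisation of the Newton polygon, and the sign convention on Hodge--Tate weights that makes the rank-one identity ``Frobenius slope $=$ sum of Hodge--Tate weights'' come out with the signs appearing in the statement. The only other genuine input is the identification, via Lubin--Tate theory for $F_v$, of the inertial restriction of a crystalline character with the explicit product $\sigma \mapsto \prod_\tau \tau(\mathrm{Art}_{F_v}^{-1}(\sigma))^{-(a_{\tau,n-k+1}+k-1)}$ from the definition of ordinarity; everything else is formal, and the statement is in fact \cite{BLGHT} Lemma 2.2.
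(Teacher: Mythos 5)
The paper does not supply a proof of this lemma: it is quoted verbatim (up to notation) from \cite{BLGHT}, Lemma 2.2, as you yourself note at the end. Your argument is essentially the proof given there --- comparing Galois-theoretic ordinarity with a $\phi$-stable flag in the weakly admissible filtered $\phi$-module, using $t_H\leq t_N$ on $\phi$-stable subobjects to force the flag when the Newton and Hodge data agree, and identifying rank-one crystalline subquotients via the $t_H=t_N$ identity and the Lubin--Tate description of crystalline characters --- so, modulo the normalisation bookkeeping you already flag, it matches the source the paper relies on.
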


\begin{remark}
We will use $D_{\mathrm{cris},\tau}(r)$, $D_{\st,\tau}(r)$ to denote $(r\otimes_{\tau,F_{v}^{0}}B_{\mathrm{c}\mathrm{r}\mathrm{i}\mathrm{s}})^{\gal(\overline{F}_{v}/F_{v})}$, $(r\otimes_{\tau, F_v^0}B_{\st})^{\mathrm{Gal}(\overline{F}_v/F_v)}$ resp. for any $p$-adic representation $r$ and embedding $\tau$ as above.
\end{remark}

\section{Dwork Motives}
\label{3}

In this section, $l$ can be any prime, $n$ be any integer  $\geq 2$ and $N$ is an integer that is

\begin{itemize}
    \item odd, not divisible by any prime factors of $ln$.
    \item $N>100n+100$
\end{itemize}
but note that the case $n>2$ and $n=2$ differs a little bit, in that there will be a slight change of the category where the objects we considered lie in.

We assume in this section that $F$ is a CM number field containing $\zeta_{N}$.

We will modify the construction and argument in section 4 of \cite{BLGHT} to fit the situation where no self-duality holds.

Let $T_{0}=\mathbb{P}^{1}-(\{\infty\}\cup\mu_{N})/\mathbb{Z}[1/N]$ with coordinate $t$ and $Y\subset \mathbb{P}^{N-1}\times T_{0}$ be a projective family defined by the following equation:
$$
X_{1}^{N}+X_{2}^{N}+\ +X_{N}^{N}=NtX_{1}X_{2}\text{ . . . }X_{N}
$$

$\pi:Y\rightarrow T_{0}$ is a smooth of relative dimension $N-2$. We will write $Y_{s}$ for the fiber of this family at a point $s$. Let $H=\mu_{N}^{N}/\mu_{N}$ where the second $\mu_N$ embeds diagonally and
$$
H_{0}:=\{(\xi_{1},\ldots, \xi_{N})\in\mu_{N}^{N}:\xi_{1}\cdots\xi_{N}=1\}/\mu_{N}\subset H
$$
Over $\mathbb{Z}[1/N, \zeta_{N}]$ there is an $H$ action on $Y$ by:
$$
(\xi_{1},\ldots, \xi_{N})(X_{1},\ldots, X_{N}, t)=(\xi_{1}X_{1},\ldots, \xi_{N}X_{N}, (\xi_{1}\cdots\xi_{N})^{-1}t)
$$
Thus $H_{0}$ acts on every fibre $Y_{s}$, and $H$ acts on $Y_{0}$.

Fix $\chi$ a character $H_{0}\rightarrow\mu_{N}$ of the form:
$$
\chi\ ((\xi_{1},\ldots, \xi_{N}))=\prod_{i=1}^{N}\xi_{i}^{a_{i}}
$$
where
\[
(a_{1},\ldots, a_{N})=(1,2,4,5,\ldots, (N-n+2)/2, (N+n-2)/2,\ldots, N-5, N-4, N-3,0,0,\ldots, 0)
\]
when $n>2$ is odd,
\[
(a_{1},\ldots, a_{N})=(1,2,3,4,5,7,8,\ldots, (N-n+3)/2, (N+n-3)/2, (N+n-1)/2,\ldots, N-4,0,0,\ldots, 0)
\]
when $n>2$ is even, and
\[
(a_{1},\ldots, a_{N})=(1,2,\ldots (N-3)/2, 0, 0, 0, (N+3)/2, \ldots, N-1)
\]
when $n=2$.

Note that $3, N-2, N-1$ do not occur in $(a_1,\ldots, a_N)$ when $n>2$ is odd, $6, N-3, N-2, N-1$ do not occur in $(a_1,\ldots, a_N)$ when $n>2$ is even,  and there are $n+1$ of $0$s in $(a_1,\ldots, a_N)$ in these cases ($n>2$). 

This character is well-defined because $\sum_{i=1}^{N}a_{i}\equiv 0$ mod $N$.

Let $(b_1, \ldots, b_n)$ be mutually distinct residue classes in $\Z/N\Z$ such that $b_i+a_j\neq 0$ in $\Z/N\Z$ for any $j\in\{1,\ldots, N\}$. Hence we have the following expression:
\[
\{b_{1},\ldots, b_{n}\}=\left\{
\begin{array}{ll}
1, 2, (N-n+4)/2, (N-n+6)/2,\ldots, (N+n-6)/2, (N+n-4)/2, N-3& n>2 \  \mathrm{odd}\\
1, 2, 3, (N-n+5)/2, (N-n+7)/2,\ldots, (N+n-7)/2, (N+n-5)/2, N-6& n>2\  \mathrm{even}\\
(N-1)/2, (N+1)/2& n=2
\end{array}\right.
\]
when $n=3,4$,  this is interpretted as $\{b_{1},\ldots, b_{n}\}=\{1,2, N-3\}, \{1,2,3, N-6\}$
  respectively.)

We have the following combinatorial property for the set $\{b_1, \ldots, b_n\}\subset \Z/N\Z$.

\begin{lemma}
\label{3.5}
Let $n>2$. {\it Consider} $\{b_{1},\ldots, b_{n}\}$ {\it as a subset of} $\{0,1,\ldots,N-1\}\cong \mathbb{Z}/N\mathbb{Z}$. {\it If for some} $\alpha\in(\mathbb{Z}/N\mathbb{Z})^{\times}$, $\{\alpha b_{1},\ldots,\alpha b_{n}\}=\{b_{1},\ldots,b_{n}\}$ {\it holds, then} $\alpha=1.$

\end{lemma}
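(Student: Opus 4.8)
\emph{The plan.} I would exploit the very rigid shape of $B:=\{b_1,\dots,b_n\}\subseteq\Z/N\Z$. In every case $B$ splits as a disjoint union $B=S\sqcup M$, where $S$ is a small ``near-$0$'' part --- equal to $\{1,2,N-3\}$ for $n>2$ odd, to $\{1,2,3,N-6\}$ for $n>2$ even, and with $B=S$, $M=\emptyset$ when $n=3,4$ --- and $M$ is a block of consecutive integers clustered about $N/2$: $M=\{(N-n+4)/2,\dots,(N+n-4)/2\}$ for $n$ odd, $M=\{(N-n+5)/2,\dots,(N+n-5)/2\}$ for $n$ even, of size $n-3$ resp.\ $n-4$. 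Because $N>100n+100$, every element of $S$ has cyclic distance $\le 6$ from $0$, whereas every element of $M$ has cyclic distance $\ge(N-n)/2>n$ from $0$; moreover $B$ is contained in $\{1,2,3\}\cup\{N-7,\dots,N-1\}$ together with the block of integers within distance $n$ of $N/2$. (Note $N$ is odd, so $2^{-1}=(N+1)/2$ exists.)

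Since $1\in B$, we have $\alpha=\alpha\cdot 1\in B=S\sqcup M$, which gives two cases. \emph{Case $\alpha\in S$} (this covers $n=3,4$, where $M=\emptyset$). For each candidate $\alpha\in S\setminus\{1\}$ the element $2\alpha=\alpha\cdot 2$ lies in $B$ (because $2\in B$) and has cyclic distance $\le 12$ from $0$, so it cannot lie in $M$; and a short check of the few possibilities (for instance if $\alpha=2$ then $2\alpha=4$, and $4\in S$ would force $4\equiv N-3$, i.e.\ $N\mid 7$) shows it cannot lie in $S$ either, using $N>100n+100$ to exclude the resulting small congruences. This contradiction forces $\alpha=1$.

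\emph{Case $\alpha\in M$}, the substantive case. By the explicit form of $M$ we may write $\alpha=(N+e)/2$ with $|e|\le n$, so $2\alpha\equiv e\pmod N$ has cyclic distance $\le n$ from $0$; since $2\alpha\in B$ and no element of $M$ is within distance $n$ of $0$, necessarily $2\alpha\in S$. Write $2\alpha=s\in S$, so $\alpha=2^{-1}s$. If $s$ is the even element $2$ then $\alpha=1\notin M$, and if $s$ is the even element $N-6$ (the $n$ even case) then $\alpha=-3\notin M$; either way we contradict $\alpha\in M$. So $s$ is an odd residue with $|s|\le 3$, and $\alpha M=s\cdot(2^{-1}M)$. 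The crucial computation is that $2^{-1}$ carries the block $M$, which sits near $N/2$, into a set lying within distance $O(n)$ of $N/4$ and within distance $O(n)$ of $3N/4$ --- concretely, $2^{-1}m$ is $m/2$ or $(m+N)/2$ according to the parity of $m$ --- so that $\alpha M=s\cdot(2^{-1}M)$ lies within distance $O(n)$ of $sN/4$ and of $3sN/4$, which, $s$ being odd, are congruent mod $N$ to $N/4$ or $3N/4$. Since $N>100n+100$, these neighbourhoods of $N/4$ and $3N/4$ are disjoint from $B$. Hence $\alpha M\cap B=\emptyset$, contradicting $\alpha M\subseteq B$ and $M\ne\emptyset$. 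So the case $\alpha\in M$ is impossible, and again $\alpha=1$.

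The main obstacle is precisely the case $\alpha\in M$: ruling out a unit $\alpha\ne1$ lying inside the central block. Its resolution rests on the arithmetic observation that $c\cdot 2^{-1}$, for a small \emph{odd} integer $c$, maps a block near $N/2$ into a region near $N/4$ and $3N/4$ that $B$ avoids because $N\gg n$ --- and that the only way to land the image back near $N/2$ or $0$ is to clear the $2^{-1}$ by taking $c$ even, which then forces $\alpha$ to be a tiny residue, not an element of $M$. Everything else --- the minor differences between the $n$-odd and $n$-even shapes, and verifying that the handful of small congruences that arise are incompatible with $N>100n+100$ --- is routine.
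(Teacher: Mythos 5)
Your proposal is correct, and its skeleton matches the paper's proof: since $1,2\in\{b_1,\ldots,b_n\}$ one gets $\alpha\in\{b_1,\ldots,b_n\}$, the ``small'' candidates ($2$, $3$, $N-3$, $N-6$ as the parity of $n$ dictates) are killed by multiplying by another small member of the set, and for $\alpha$ in the central block one writes $\alpha=(N+e)/2$ and uses $2\alpha\equiv e$ to force $e\in\{1,3\}$ (resp.\ $\{1,-3\}$). The difference lies in how the central-block case is finished. The paper multiplies $\alpha$ by the mirror element $(N-e)/2$, which lies in the set by the symmetry of the block about $N/2$, deduces $-e^{2}/4\in\{b_1,\ldots,b_n\}$, hence $N-1$ or $N-9\in 4\{b_1,\ldots,b_n\}$, and rules this out by listing $4\{b_1,\ldots,b_n\}$ explicitly. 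You instead track the whole block: $\alpha=2^{-1}s$ with $s$ a small odd integer sends your $M$ (an interval of length $O(n)$ about $N/2$) into the $O(n)$-neighbourhoods of $N/4$ and $3N/4$, which the set avoids because $N>100n+100$, so $\alpha M\cap\{b_1,\ldots,b_n\}=\emptyset$ while $\alpha M\subseteq\{b_1,\ldots,b_n\}$ and $M\neq\emptyset$. Your variant is marginally more robust and conceptual: it does not use the symmetry of $M$ about $N/2$, only that it is a short interval centred there, and it explains why the paper's residues $N-1$, $N-9$ cannot appear in $4\{b_1,\ldots,b_n\}$ (they are $4$ times points near $N/4$ or $3N/4$); the paper's version trades this for a purely finite membership check against an explicitly written set. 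All the small congruence eliminations you invoke ($4$, $6$, $N-12$, etc.\ not in the set) do hold under $N>100n+100$, so there is no gap.
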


\begin{proof}
If $n$ is even, then $1\in\{b_{1},\ldots, b_{n}\}$ and so $\alpha\in\{b_{1},\ldots, b_{n}\}$. If $\alpha=2$, then $2\in\{b_{1},\ldots, b_{n}\}$ and so $4\in\{b_{1},\ldots, b_{n}\}$. But by the assumption $N>100n+100$, $3<4<(N-n+5)/2$, so that $4\notin\{b_{1},\ldots, b_{n}\}$. Thus $\alpha\neq 2$. Same argument $(3<9,36<(N-n+5)/2)$ shows that $\alpha\neq 3, N-6$. If $n=4$, we are done. For $n\geq 6$, if $\alpha=(N+x)/2$ for some $x$ odd and $x\in[-n+5, n-5]$, then $2\alpha=x\in\{b_{1},\ldots, b_{n}\}$, but by assumption on $N$ and $n$, we have $(N+n-5)/2<N-n+5\leq N-1$ and $1\leq n-5< (N-n+5)/2$. Therefore $\{1, 3,\ldots, n-5, N-n+5, N-n+7,\ldots, N-1\}\cap\{b_{1},\ldots, b_{n}\}\subset \{1, 3\}$ and so $x=1$ or $3$. In either case, if $\alpha=(N+x)/2\in\{b_{1},\ldots, b_{n}\}$, then $(N-x)/2\in\{b_{1},\ldots, b_{n}\}$, so $-x^{2}/4\equiv(N+x)/2\cdot(N-x)/2\in\{b_{1},\ldots, b_{n}\}$, thus $N-1\in\{4b_{1},\ldots, 4b_{n}\}$ or $N-9\in\{4b_{1},\ldots, 4b_{n}\}$. Viewed as a subset of the representatives $\{0,1,\ldots, N-1\}$, $\{4b_{1},\ldots, 4b_{n}\}\subset\{2, 6 ,\ldots,2n-10, N-(2n-10), N-(2n-14), \ldots, N-2\}\cup \{4,8,12, N-24\}$. But $N-1, N-9>2n-10,  12$, so they must lies in $\{N-(2n-10), N-(2n-14),\ldots, N-2\}\cup\{N-24\}$, a contradiction. 

If $n$ is odd, again $1\in\{b_{1},\ldots, b_{n}\}$ and so $\alpha\in\{b_{1},\ldots, b_{n}\}$. If $\alpha=2$, then again $4\in\{b_{1},\ldots, b_{n}\}$. But $2<4<(N-n+4)/2$ gives a contradiction. Similarly $\alpha\neq N-3$ because $2<9<(N-n+4)/2$. If $n=3$ we are done. For $n\geq 5$, we have $\alpha=(N+x)/2$ for some $x$ odd and $x\in [-n+4, n-4]$. Then $2\alpha=x\in\{b_{1},\ldots, b_{n}\}$, but by assumption on $N$, we have $(N+n-4)/2<N-n+4\leq N-1$ and $1\leq n-4< (N-n+4)/2$. Therefore $\{1, 3,\ldots, n-4, N-n+4, N-n+6,\ldots, N-1\}\cap\{b_{1},\ldots, b_{n}\}\subset\{1, N-3\}$ and so $x=1$ or $-3$. In either case, if $\alpha=(N+x)/2\in\{b_{1},\ldots, b_{n}\}$, then $(N-x)/2\in\{b_{1},\ldots, b_{n}\}$, so $-x^{2}/4\equiv(N+x)/2\cdot(N-x)/2\in\{b_{1},\ldots, b_{n}\}$, thus $N-1\in\{4b_{1},\ldots, 4b_{n}\}$ or $N-9\in\{4b_{1},\ldots, 4b_{n}\}$. Viewed as a subset of the representatives $\{0,1,\ldots, N-1\}$, $\{4b_{1},\ldots, 4b_{n}\}\subset\{2, 6, \ldots,2n-8, N-(2n-8), N-(2n-12), \ldots, N-2\}\cup\{4,8,N-12\}$. But $N-1, N-9>2n-8, 8$, so they must lie in $\{N-(2n-8), N-(2n-12), \ldots, N-2\}\cup\{N-12\}$, a contradiction.
\end{proof}

One reason for this choice is that when $n>2$ we want to avoid making the set $\{a_{1},\ldots, a_{N}\}$ self-dual so that Lemma \ref{3.2} and Lemma \ref{3.4}  hold. And the above lemma will be crucial to avoid self duality. On the contrary, if the set $\{a_{1},\ldots, a_{N}\}$ is chosen to be self-dual , the $V[\lambda]$ defined below would be a Galois representation that takes values in $GSp_n$.

For any prime $\lambda$ of $\mathbb{Z}[1/2N, \zeta_{N}]$ of residue characteristic $l$, we define the lisse sheaf $V_{\lambda}/(T_{0}\times{\rm Spec} \mathbb{Z}[1/2Nl, \zeta_{N}])_{et}$ by:
$$
V_{\lambda}=(R^{N-2}\pi_{\ast}\mathbb{Z}[\zeta_{N}]_{\lambda})^{\chi,H_{0}}
$$
when $n>2$. When $n=2$, we use the same formula to define the object $U_\lambda$ for a prime $\lambda$ of $\mathbb{Z}[1/2N, \zeta_{N}]^+$, following the notation of \cite{BLGHT}.

Similarly, for any nonzero ideal $\n$ of $\mathbb{Z}[1/2N, \zeta_{N}]$ of norm $M$, we can define the lisse sheaf $V[\n]/(T_{0}\times{\rm Spec} \mathbb{Z}[1/2NM, \zeta_{N}])_{et}$ by:
$$
V[\n]=(R^{N-2}\pi_{\ast}(\mathbb{Z}[\zeta_{N}]/\n))^{\chi,H_{0}}
$$
when $n>2$, and we use the same formula to define the object $U[\n]$ for a nonzero ideal $\n$ of $\mathbb{Z}[1/2N, \zeta_{N}]^+$.

Since $H$ acts on $Y_{0}$, we have the following decomposition:
$$
V_{\lambda,0}=\bigoplus_{i=1}^{N}V_{\lambda,i},  \  V[\n]_{0}=\bigoplus_{i=1}^{N}V_{i}[\n]
$$
here $V_{\lambda,i}$ and $V_{i}[\n]$ are the subspace of $V_{\lambda,0}, V[\n]$ where $H$ acts by the character $\chi_{i}$:
$$
\xi\rightarrow\prod_{j=1}^{N}\xi_{j}^{a_{j}+i}
$$

Again, we write same decomposition in the case $n=2$ as into $U_{\lambda, i}$ and $U_i[\n]$.

Fix an embedding $\tau:\mathbb{Q}(\zeta_{N})\inj\mathbb{C}$  such that $\tau(\zeta_{N})=e^{2\pi i/N}$. Let $\tilde{\pi}: Y(\mathbb{C})\rightarrow T_{0}(\mathbb{C})$ denote the base change of $\pi$ along $\tau$, viewed as a map of complex analytic spaces and $V_{B}$ be the locally constant sheaf over $T_{0}(\mathbb{C})$:
$$
V_{B}=(R^{N-2}\tilde{\pi}_{\ast}\mathbb{Z}[\zeta_{N}])^{\chi,H_{0}}
$$
when $n>2$. And we denote the same object in the case $n=2$ as $U_B$.

Let $\tau$ also denote the induced base change $(T_{0})_{\mathbb{C}}\rightarrow T_{0}\times \spec \mathbb{Z}[1/2NMl,\zeta_N]$. Under previous notation, $V_{B}\otimes_{\mathbb{Z}[\zeta_{N}]}\mathbb{Z}[\zeta_{N}]_{\lambda}$ corresponds to $\tau^{*}V_{\lambda}$ under the equivalence between locally constant analytic $\Z[\zeta_N]_\lambda$-sheaf on $T_0(\C)$ and locally constant etale  $\Z[\zeta_N]_\lambda$-sheaf on $(T_{0})_{\mathbb{C}}$. 

Similarly, $
V_{B}\otimes_{\mathbb{Z}[\zeta_{N}]}\mathbb{Z}[\zeta_{N}]/\n$ corresponds to $\tau^{*}V[\n]
$ under the equivalence between locally constant analytic $\Z[\zeta_N]/\n$-sheaf on $T_0(\C)$ and locally constant etale  $\Z[\zeta_N]/\n$-sheaf on $(T_{0})_{\mathbb{C}}$.

Similar relation holds when $n=2$, see \cite{BLGHT} section 4.

\

Let $T_{0}'=\mathbb{P}^{1}-\{0,1, \infty\}$ with coordinate $t'$ and $Y'\subset \mathbb{P}^{N-1}\times T_{0}'$ be a projective family defined by the following equation:
$$
X_{1}^{\prime N}+X_{2}^{\prime N}+\cdots+t^{\prime-1}X_{N}^{\prime N}=NX_{1}'X_{2}'\cdots X_{N}'
$$
Then $t'\mapsto t^{N}$ gives an $N$-fold Galois covering $T_{0}-\{0\}\rightarrow T_{0}'$  and  $X_{1}'\rightarrow X_{1},  X_{2}'\rightarrow X_{2}\ldots, X_{N}'\rightarrow tX_{N}$ identifies the pullback of $\pi': Y '\rightarrow T_{0}'$ along this covering with $\pi: Y-Y_{0}\rightarrow T_{0}-\{0\}$. %In other words, over $\spec \mathbb{Q}(\zeta_N)$ , if we let $\mu_{N}$ act on the family $Y-Y_{0}\rightarrow T_{0}-\{0\}$ by
%$$
%\xi(X_{1}, X_{2},\ldots, X_{N}, t)=(X_{1}, X_{2},\ldots, \xi X_{N}, \xi^{-1}t)
%$$
%then $Y'=(Y-Y_{0})/\mu_{N}$ and $T_{0}'=(T_{0}-\{0\})/\mu_{N}$. 
Over $\mathbb{Z}[1/N, \zeta_{N}]$, $H_{0}$ acts on $Y'$ by
$$
(\xi_{1},\ldots,\xi_{N})(X_{1}',\ldots, X_{N}', t')=(\xi_{1}X_{1}',\ldots, \xi_{N}X_{N}', t')
$$
This $H_{0}$ action is compatible with the $H_{0}$ action on $Y-Y_{0}$.

\

Let $\widetilde{\pi}': Y'(\mathbb{C})\rightarrow T_{0}'(\mathbb{C})$ be the base change of $\pi'$ along $\tau$ viewed as a map of complex analytic spaces and let $V_{B}'=(R^{N-2}\widetilde{\pi}_{\ast}'\mathbb{Z}[\zeta_{N}])^{\chi,H_{0}}$ be a locally constant sheaf over $T_{0}'(\mathbb{C})$. Then the pullback of $V_{B}'$ along the covering $T_{0}(\mathbb{C})-\{0\}\rightarrow T_{0}'(\mathbb{C})$ is naturally identified with $V_{B}$ over $T_{0}(\mathbb{C})-\{0\}$.

Fix a nonzero base point $t\in T_{0}(\mathbb{C})$ and let $t'$ be its image in $T_{0}'(\mathbb{C})$ . Now we study the image of the monodromy representation:
$$
\rho_{t}:\pi_{1}(T_{0}(\mathbb{C}), t)\rightarrow GL(V_{B,t})
$$

We in turn consider the monodromy representation:
$$
\rho_{t'}:\pi_{1}(T_{0}'(\mathbb{C}), t')\rightarrow GL(V_{B,t'}')
$$

\begin{proposition}
 {\it The sheaves}  $V_{\lambda}, V[\n], V_{B}, V_{B}'$ {\it are locally free over}  $\mathbb{Z}[\zeta_{N}]_{\lambda}, \mathbb{Z}[\zeta_{N}]/\n$, $\mathbb{Z}[\zeta_{N}], \mathbb{Z}[\zeta_{N}]$ {\it of rank} $n$ {\it respectively}.

\end{proposition}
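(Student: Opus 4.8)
The plan is to realise each of the four sheaves as a $(\chi,H_0)$-eigensheaf inside the $(N-2)$-nd higher direct image of a constant sheaf along a smooth projective family whose fibres have torsion-free middle integral cohomology, and then to treat separately the two issues involved: local freeness, and the value of the rank. For the first ingredient, observe that every fibre $Y_s$ ($s\in T_0$) is a smooth hypersurface of dimension $d=N-2\ge 2$ in $\mathbb{P}^{N-1}$ — this is part of the smoothness of $\pi$ — and likewise for every fibre of $\pi'$. By the Lefschetz hyperplane theorem together with Poincar\'e duality and the universal coefficient theorem, the integral singular cohomology $H^{j}(Y_s(\C),\Z)$ is torsion-free for all $j$, in particular for $j=N-2$; hence $H^{N-2}(Y_s(\C),\Z[\zeta_N])=H^{N-2}(Y_s(\C),\Z)\otimes_{\Z}\Z[\zeta_N]$ is a finite free $\Z[\zeta_N]$-module. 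By Ehresmann's theorem, $R^{N-2}\widetilde\pi_*\Z[\zeta_N]$ and $R^{N-2}\widetilde{\pi}'_*\Z[\zeta_N]$ are therefore locally constant sheaves of finite free $\Z[\zeta_N]$-modules. On the \'etale side, $R^{N-2}\pi_*(\Z[\zeta_N]/\n)$ is locally constant constructible by smooth proper base change, and its formation commutes with base change, so its stalks at all geometric points of the connected base $T_0\times\spec\Z[1/2NM,\zeta_N]$ are isomorphic; choosing a complex geometric point and invoking the Artin comparison and the torsion-freeness just noted, these stalks are finite free over $\Z[\zeta_N]/\n$, and passing to the limit shows $R^{N-2}\pi_*\Z[\zeta_N]_\lambda$ is a lisse sheaf of finite free $\Z[\zeta_N]_\lambda$-modules.

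\textbf{Local freeness.} For $V_\lambda$ and $V[\n]$, the order $|H_0|$ is a power of $N$, which is a unit in $\Z[\zeta_N]_\lambda$ (as $\lambda$ has residue characteristic $l\nmid N$) and in $\Z[\zeta_N]/\n$ (as $\n$ is prime to $2N$); so $e_\chi=|H_0|^{-1}\sum_{h\in H_0}\chi(h)^{-1}h$ is a constant idempotent in the relevant group ring, and $V_\lambda,V[\n]$ are its images on the lisse sheaves of finite free modules found above. A direct summand cut out by a constant idempotent is again locally constant with projective stalks — hence, over the semilocal rings at hand, free ones — so $V_\lambda$ and $V[\n]$ are locally free. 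For $V_B$ and $V_B'$ one cannot invert $N$ in $\Z[\zeta_N]$, but the subsheaf on which $H_0$ acts through $\chi$ is \emph{saturated} in the ambient torsion-free sheaf: if $m\cdot v$ lies in it for some nonzero integer $m$, then $m\bigl((h-\chi(h))v\bigr)=0$ and torsion-freeness force $(h-\chi(h))v=0$ for all $h\in H_0$; moreover after inverting $N$ the $\chi$-part is a direct summand since $\Q(\zeta_N)[H_0]$ is split semisimple. A saturated subsheaf of a free local system over the Dedekind ring $\Z[\zeta_N]$ that becomes a direct summand generically has projective stalks, so $V_B$ and $V_B'$ are locally free.

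\textbf{The rank.} All four sheaves have locally constant — hence, over the connected bases in play, constant — rank. By the comparisons recorded in this section, $\tau^*V_\lambda$ and $\tau^*V[\n]$ correspond to $V_B\otimes_{\Z[\zeta_N]}\Z[\zeta_N]_\lambda$ and $V_B\otimes_{\Z[\zeta_N]}\Z[\zeta_N]/\n$, and the pullback of $V_B'$ to $T_0(\C)\setminus\{0\}$ is $V_B$; so it suffices to compute $\mathrm{rk}_{\Z[\zeta_N]}V_B=\dim_{\Q(\zeta_N)}\bigl(H^{N-2}(Y_t(\C),\Q(\zeta_N))\bigr)^{\chi,H_0}$ for a single $t$. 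Since $N$ is odd, $N-2$ is odd, so the middle cohomology of $Y_t$ is entirely primitive, and the Griffiths--Dwork description of the eigenspace decomposition of the cohomology of a Dwork hypersurface (as in \cite{HSBT} \S1 and \cite{BLGHT} \S4, following Dwork and Katz) identifies this dimension with
\[
\#\{\,c\in\Z/N\Z:\ c+a_j\neq 0\text{ in }\Z/N\Z\text{ for all }j=1,\dots,N\,\}.
\]
By our choice of $(a_1,\dots,a_N)$, this set is exactly $\{b_1,\dots,b_n\}$, and so the rank is $n$, as claimed. (For $n=2$ one runs the identical argument with $\Z[\zeta_N]^+$ in place of $\Z[\zeta_N]$, cf.\ \cite{BLGHT} \S4.)

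\textbf{Main difficulty.} The two formal ingredients — torsion-freeness of hypersurface cohomology, and the eigensheaf being a direct summand — are routine. The substantive point is the rank identity, and it is precisely the (non-self-dual) combinatorial shape of $\chi$, the same shape that underlies Lemma \ref{3.5}, which forces that dimension to equal $\#\{b_1,\dots,b_n\}=n$ rather than something larger.
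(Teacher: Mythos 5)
Your proof is correct and follows essentially the same route as the paper: local freeness comes from smoothness and properness of the family (plus the eigenspace-projector/saturation bookkeeping you spell out), and the rank is computed at a single fibre by counting the characters of $H$ extending $\chi$ with all exponents nonzero, which by the choice of $(a_1,\dots,a_N)$ is exactly $\{b_1,\dots,b_n\}$ — the paper does this at the Fermat fibre $t=0$ via Proposition I.7.4 of \cite{DMOS}, while you quote the equivalent eigenspace-dimension count from \cite{HSBT}, \cite{BLGHT}. The extra details you supply (torsion-freeness of hypersurface cohomology, the idempotent for $\Z[\zeta_N]_\lambda$ and $\Z[\zeta_N]/\n$ coefficients, saturation over $\Z[\zeta_N]$) are correct and simply make explicit what the paper leaves implicit.
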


\begin{proof}
Locally freeness follows because the family is smooth and proper. For the rank part, one only need to check at the fibre over $0$ and apply Proposition I.7.4 of \cite{DMOS}.

\end{proof}

Similar relation holds in the case $n=2$, see \cite{BLGHT}
section 4. The different aspect for the case $n=2$ is that we may use the locally constant sheaves $V_\lambda, V[\n], V_B$ defined there with coefficients $\Z[\zeta_N]^+_\lambda, \Z[\zeta_N]^+/\n, \Z[\zeta_N]^+$ respectively, such that  $V_\lambda\otimes_{\Z[\zeta_N]^+}\Z[\zeta_N]$, $V[\n]\otimes_{\Z[\zeta_N]^+}\Z[\zeta_N]$, $V_B\otimes_{\Z[\zeta_N]^+}\Z[\zeta_N]$ are isomorphic to $U_\lambda$, $U[\n]$, $U_B$ respectively. Now we consistently work with $V_\lambda, V[\n], V_B$, regardless of whether $n>2$ or $n=2$.

We already have the counterpart of Lemma \ref{3.4} as provided by Corollary 4.7 of \cite{BLGHT}. i.e. $\overline{\rho}_{t}(\pi_{1}(T_{0}(\mathbb{C}), t))=SL(V_{B,t}/\lambda)$. So we focus on the case $n>2$ until the end of the proof of Lemma \ref{3.4}.

Let $\gamma_{0}, \gamma_{1}, \gamma_{\infty}$ be the loop around $0,1, \infty$, generating $\pi_{1}(T_{0}'(\mathbb{C}), t')$ subject only to the relation $\gamma_{0}\gamma_{1}\gamma_{\infty}=1$. Here we let $\gamma_{0}$ be such oriented that its image in $\mathrm{G}\mathrm{a}\mathrm{l}(T_{0}(\mathbb{C})-\{0\}/T_{0}'(\mathbb{C}))$ is $e^{2\pi i/N}=\tau(\zeta_{N})$

\begin{lemma}
\label{3.2}

\begin{enumerate}
    \item $\rho_{t'}(\gamma_0)$ has characteristic polynomial $\prod_{j=1}^{n}(X-\zeta_{N}^{b_{\mathrm{j}}})$ {\it where}

\item $\rho_{t'}(\gamma_{\infty})$ {\it has characteristic polynomial} $(X-1)^{n}.$

\item $\rho_{t'}(\gamma_{1})$ {\it is a transvection, i.e}.: {\it it is unipotent and} $\mathrm{K}\mathrm{e}\mathrm{r}(\rho_{t'}(\gamma_{1})-1)$ {\it has dimension} $n-1$.
\end{enumerate}

\end{lemma}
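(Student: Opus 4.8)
The plan is to compute the three local monodromies separately, following the template of Katz's analysis of the Dwork family (as in \cite{HSBT}) and of \cite{BLGHT} Section~4, and checking that the modified, non-self-dual $\chi$ does not disturb these \emph{local} computations. Two facts are used throughout: that $V_{B}$ is lisse on all of $T_{0}(\C)$ — in particular at $t=0$ — since the family $\pi$ is smooth and proper there; and that along the degree-$N$ covering $p\colon T_{0}(\C)-\{0\}\to T_{0}'(\C)$, $t\mapsto t^{N}$, one has $V_{B}|_{T_{0}(\C)-\{0\}}=p^{\ast}V_{B}'$. (We take $n>2$ throughout, the case $n=2$ being the content of \cite{BLGHT} Section~4.)

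For part~(1): since $p^{\ast}V_{B}'=V_{B}|_{T_{0}(\C)-\{0\}}$ extends to the lisse sheaf $V_{B}$ across $t=0$, the loop $\gamma_{0}^{N}$, which lifts along $p$ to a small loop around $t=0$, acts trivially, so $\rho_{t'}(\gamma_{0})^{N}=1$. To read off the eigenvalues, I would trivialize $V_{B}$ near $0$ and thereby identify the stalk $(V_{B}')_{t'}$ with $V_{B,0}$; under this identification $\rho_{t'}(\gamma_{0})$ becomes the action on $V_{B,0}$ of the deck transformation of $p$ determined by $\gamma_{0}$, which via the explicit identification of the pulled-back family is a specific element of $H$ acting on the Fermat hypersurface $Y_{0}$. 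By Proposition~I.7.4 of \cite{DMOS}, the middle cohomology of $Y_{0}$ is a direct sum of one-dimensional $H$-eigenlines indexed by the $\mathbf{c}\in(\Z/N\Z)^{N}$ with $\sum_{j}c_{j}\equiv 0$ and all $c_{j}\not\equiv 0\pmod N$; its $(\chi,H_{0})$-isotypic part is $\bigoplus_{i}V_{B,i}$ with $V_{B,i}$ the line for $\mathbf{c}=(a_{j}+i)_{j}$ when $i\notin\{-a_{j}:j\}$ and $V_{B,i}=0$ otherwise. As $(\Z/N\Z)\setminus\{-a_{j}:j\}=\{b_{1},\dots,b_{n}\}$, exactly the lines $V_{B,b_{j}}$ survive, and evaluating the relevant element of $H$ on each of them — keeping track of the orientation convention fixed for $\gamma_{0}$ — yields the eigenvalue $\zeta_{N}^{b_{j}}$, hence the stated characteristic polynomial.

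For part~(3): the family $\pi'$ extends across $t'=1$ to a family with smooth total space whose fibre there is the Dwork hypersurface $\sum_{i}X_{i}^{N}=N\prod_{i}X_{i}$; this hypersurface is singular exactly along the $H_{0}$-orbit of points $[\zeta_{1}:\cdots:\zeta_{N}]$ with $\zeta_{i}\in\mu_{N}$, $\prod_{i}\zeta_{i}=1$, and each such point is an ordinary double point (nondegenerate Hessian). So $t'=1$ is a Lefschetz-type degeneration and Picard--Lefschetz applies: $\rho_{t'}(\gamma_{1})$ is unipotent and $\rho_{t'}(\gamma_{1})-\mathrm{id}$ has image in the span of the vanishing cycles of these nodes. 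Since $H_{0}$ acts simply transitively on the nodes, that span — chosen orientation-compatibly along the orbit — is a quotient of the regular representation $\C[H_{0}]$; hence its $(\chi,H_{0})$-isotypic part, which contains the image of $(\rho_{t'}(\gamma_{1})-\mathrm{id})|_{V_{B}'}$, has dimension $\le 1$. Thus $\rho_{t'}(\gamma_{1})|_{V_{B}'}$ is either the identity or a transvection. To exclude the identity I would invoke $\gamma_{0}\gamma_{1}\gamma_{\infty}=1$: if $\rho_{t'}(\gamma_{1})=1$ then $\rho_{t'}(\gamma_{\infty})=\rho_{t'}(\gamma_{0})^{-1}$ has finite order, hence — being unipotent by part~(2) — equals the identity, forcing $\rho_{t'}(\gamma_{0})=1$ and contradicting part~(1), since no $b_{j}$ is $\equiv 0\pmod N$ (because $0\in\{a_{1},\dots,a_{N}\}$). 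Hence $\rho_{t'}(\gamma_{1})$ is a genuine transvection.

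Part~(2) — unipotence of $\rho_{t'}(\gamma_{\infty})$ — is the step I expect to be the main obstacle, as it is the only one needing input beyond elementary topology. As $t'\to\infty$ the Dwork hypersurface degenerates, up to rescaling, to the reduced union of the $N$ coordinate hyperplanes of $\mathbb{P}^{N-1}$ (a normal-crossings divisor); but the naive total space is \emph{not} semistable over a neighbourhood of $t'=\infty$ — the special fibre acquires an isolated, non-normal-crossings singularity at $[0:\cdots:0:1]$ — so one must first pass to a semistable model over a disc around $\infty$. This is precisely the log-geometric analysis of the Dwork family at the cusp carried out in \cite{qian} (and, classically, in \cite{HSBT}); granting such a model, the monodromy theorem gives that the monodromy of the ambient $R^{N-2}\widetilde{\pi}'_{\ast}\Q$, hence of its direct summand $V_{B}'$, at $\infty$ is unipotent. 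One cannot shortcut this through the covering $p$: since $p$ is totally ramified over $t'=\infty$, semistability of the $Y$-family at $t=\infty$ would only yield that $\rho_{t'}(\gamma_{\infty})^{N}$ is unipotent, leaving the semisimple part of $\rho_{t'}(\gamma_{\infty})$ a priori a nontrivial element of $\mu_{N}$. (Alternatively, one could identify $V_{B}'$ with an explicit hypergeometric sheaf and read unipotence at $\infty$ off its known local structure.)
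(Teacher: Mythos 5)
Parts (1) and (3) of your sketch are, in substance, what the paper does. For (1) the paper likewise identifies the action of $\gamma_0$ on $V'_{B,t'}$ with the deck-transformation ($H$-) action on $V_{B,0}$ and reads off the eigenvalues $\zeta_N^{b_j}$ from Proposition I.7.4 of \cite{DMOS}; your preliminary observation that $\rho_{t'}(\gamma_0)^N=1$ is correct but not even needed once that identification is made. For (3) the paper simply cites part 2 of Lemma 4.3 of \cite{BLGHT}, whose content is the Picard--Lefschetz/$H_0$-transitivity argument you reconstruct (note $N-2$ is odd, so the local monodromies at the nodes are genuine transvections, as you need); your way of excluding $\rho_{t'}(\gamma_1)=\mathrm{id}$ via $\gamma_0\gamma_1\gamma_\infty=1$ together with (1) and (2) is valid, since (2) is established independently.

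Part (2) is where your proposal has a genuine gap, and also where it diverges most from the paper. You rest everything on the existence of a semistable (reduced normal crossings, no base change) model of the twisted family $Y'$ over a disc around $t'=\infty$, and you attribute this ``precisely'' to \cite{qian} and \cite{HSBT}. But, as used in this paper, Lemma 2.1 of \cite{qian} produces a semistable model of the untwisted family $Z_0\colon T(X_1^N+\cdots+X_N^N)=X_1\cdots X_N$ over the $T$-line, i.e.\ of the $Y$-family near $t=\infty$, and \cite{HSBT}, \cite{BLGHT} obtain the local monodromy at $\infty$ from Katz's hypergeometric description rather than from a semistable model of $Y'$; so the input your argument needs is not supplied by the places you point to, and producing it is essentially the whole content of (2). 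Ironically, your cautionary remark --- that semistability of the $Y$-family at $t=\infty$ only controls $\rho_{t'}(\gamma_\infty)^N$, because $t\mapsto t^N$ is totally ramified over $t'=\infty$ --- describes exactly the route the paper's own proof takes: it reduces to the monodromy of $R^{N-2}p_*\C$ for $Z_0$ around $T=0$ and applies \cite{qian} plus the local monodromy theorem, with the reduction step (``it suffices'') left unjustified; on its face that only gives quasi-unipotence of $\rho_{t'}(\gamma_\infty)$ with eigenvalues in $\mu_N$. So your diagnosis of the delicate point is correct and sharper than the paper's write-up, but your own part (2) is not closed: you would need either to construct the semistable model of $Y'$ at $t'=\infty$ explicitly, or to identify $V'_B\otimes\C$ with an explicit hypergeometric local system by computing the Picard--Fuchs equation (Katz's route) --- not by Levelt/Beukers--Heckman rigidity, which takes the local data at $\infty$ as input and would be circular with Lemma \ref{3.3} --- and read off unipotence at $\infty$ from that description.
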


\begin{proof}
(1) The action of $\gamma_{0}$ on $V_{B,t'}'$, is equivalent to the $\zeta_{N}$ action on $V_{B,0}$, which is the scalar multiplication by $\zeta_{N}^{i}$ on the $\chi_{i}$ eigenspace of $V_{B,0}$. By proposition I.7.4 of \cite{DMOS}, the $\chi_{i}$-eigenspaces are nonzero if and only if $0\notin\{i+a_1,\ldots,i+a_N\}$, i.e. $i\in\{b_{1},\ldots, b_{n}\}$, in which case the eigenspaces are all of rank 1. Hence the expression of the characteristic polynomial of $\rho_{t'}(\gamma_{0})$ follows.

\ 

(2) Suppose $Z_0$ is the variety $T(X_1^N+X_2^N+\cdots+X_N^N)=X_1X_2\cdots X_N$ contained in $\mathbb{P}^{N-1}\times \A^1$. We use $p$ to denote the projection $Z_0\rightarrow \A^1$. So it suffice to show the monodromy around $0$ of the larger local system $R^{N-2}p_\ast\C$ has charateristic polynomial a power of $(X-1)$.  We apply Lemma 2.1 of \cite{qian} base changed to $\C$ via $W(k)[T, U^\pm]\rightarrow \C[T]$, $U\mapsto 1, T\mapsto T$ and a fixed isomorphism of $\overline{W(k)[\frac{1}{p}]}\cong \C$, to conclude that there is a blowup $\mathbb{X}$ of $Z_0$ that is an isomorphism outside the fiber over $0$ and is semistable over the base $\A^1$. Note that we call a map semistable if the corresponding normal crossing divisor is reduced and does not have self crossing throughout this paper.

Thus, by the vanishing cycle technique used to prove local monodromy theorem (cf. \cite{ill} 2.1), we see that such monodromy is unipotent. Note that this makes use of the fact that our normal crossing divisor is reduced.

\ 

(3) The proof is the same as part 2 of Lemma 4.3 in \cite{BLGHT}. 
\end{proof}

Now we study the image of the monodromy map. Let $\lambda$ be a prime of $\mathbb{Z}[\zeta_{N}]$ (of $\mathbb{Z}[\zeta_{N}]^+$ when $n=2$) of characteristic $l$  and $\overline{\rho}_{t'}: \pi_{1}(T_{0}'(\mathbb{C}), t')\rightarrow GL(V_{B,t'}/\lambda)$, $\overline{\rho}_{t}:
\pi_{1}(T_{0}(\mathbb{C}), t)\rightarrow GL(V_{B,t}/\lambda)$ be the reduction of $\rho_{t'}$ , $\rho_{t}$ by $\lambda$ respectively.

We first give a description of $\rho_{t'}$ by Lemma \ref{3.2} and the following lemma.

\begin{lemma}
\label{3.3}
{\it Let} $\rho$ {\it be the representation} $\rho:\pi_{1}(T_{0}'(\mathbb{C}), t')\rightarrow GL_{n}(\mathbb{Z}[\zeta_{N}])$ {\it sending} $\gamma_{0}$ {\it to} $B^{-1}, \gamma_{\infty}$ {\it to} $A$, {\it and} $\gamma_{1}$ {\it to} $BA^{-1}$, {\it where}
$$
A=\ \left(\begin{array}{lllll}
0 & 0 & \cdots & 0 & -A_{n}\\
1 & 0 & \cdots & 0 & -A_{n-1}\\
0 & 1 & \cdots & 0 & -A_{n-2}\\
 &  & \ddots &  & \\
0 & 0 & \cdots & 1 & -A_{1}
\end{array}\right)
$$
$$
B=\ \left(\begin{array}{lllll}
0 & 0 & \cdots & 0 & -B_{n}\\
1 & 0 & \cdots & 0 & -B_{n-1}\\
0 & 1 & \cdots & 0 & -B_{n-2}\\
 &  & \ddots &  & \\
0 & 0 & \cdots & 1 & -B_{1}
\end{array}\right)
$$
{\it and} $A_{i}, B_{i}\in \mathbb{Z}[\zeta_{N}]$ {\it are the coe}ffi{\it cients of the expansions}:
$$
(X\ -1)^{n}=X^{n}+A_{1}X^{n-1}+\cdots+A_{n}
$$
$$\prod_{i=1}^{n}(X-\zeta_{N}^{-b_i})=X^{n}+B_{1}X^{n-1}+\cdots+B_{n}
$$
{\it Then as representation into} $GL_{n}(\mathbb{C})$, $\rho_{t'}$ {\it and} $\rho$ {\it are equivalent}.

\end{lemma}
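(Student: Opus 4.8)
The plan is to identify $\rho_{t'}$ with the explicit ``Levelt normal form'' $\rho$ by the rigidity theory of hypergeometric local systems on $\mathbb{P}^1-\{0,1,\infty\}$, exactly as in \cite{BLGHT} Lemma 4.4 but with the set $\{b_1,\ldots,b_n\}$ in place of the self-dual set used there. First I would check that $\rho$ has precisely the local monodromy recorded in Lemma \ref{3.2}. Since $B$ and $A$ are the companion matrices of $\prod_i(X-\zeta_N^{-b_i})$ and $(X-1)^n$ respectively, $\rho(\gamma_0)=B^{-1}$ has characteristic polynomial $\prod_i(X-\zeta_N^{b_i})$ and $\rho(\gamma_\infty)=A$ has characteristic polynomial $(X-1)^n$; moreover $\det A=1$ and $\det B=\zeta_N^{-\sum_i b_i}=1$ because $\sum_i b_i\equiv 0\bmod N$ (visible from the explicit lists of the $b_i$, and also forced by Lemma \ref{3.2}(3)). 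Because companion matrices agree in their first $n-1$ columns, $B-A$ is supported in its last column, hence has rank $\le 1$, and rank exactly $1$ as $A\ne B$. Therefore $\rho(\gamma_1)-1=(B-A)A^{-1}$ has rank one, and $\det\rho(\gamma_1)=\det B/\det A=1$ forces this rank-one perturbation of the identity to be traceless, hence nilpotent; so $\rho(\gamma_1)$ is a transvection. Thus $\rho$ and $\rho_{t'}$ have matching characteristic polynomials at $0$ and $\infty$, and $\gamma_1$ acts by a (unipotent) pseudo-reflection in both.

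Next I would show that $\rho$ and $\rho_{t'}$ are each irreducible by the same elementary argument. Writing $g_0,g_\infty$ for the images of $\gamma_0,\gamma_\infty$, in both cases $g_\infty$ is unipotent, $g_0$ is semisimple with all eigenvalues roots of unity $\ne 1$ (since $0<b_i<N$), and $M:=g_0^{-1}-g_\infty$ has rank one: this is $B-A$ for $\rho$, and in general it follows from $g_0^{-1}g_\infty^{-1}-1=Mg_\infty^{-1}$ having rank one. If $W$ were a nonzero proper $\langle g_0,g_\infty\rangle$-invariant subspace of the underlying space $V_{B,t'}'$, then $M(W)\subseteq W\cap\mathrm{Im}(M)$, and since $\mathrm{Im}(M)$ is one-dimensional this forces either $M|_W=0$, hence $g_0^{-1}|_W=g_\infty|_W$, or $\mathrm{Im}(M)\subseteq W$, hence $M$ induces $0$ on the nonzero quotient $V_{B,t'}'/W$ and $g_0^{-1}=g_\infty$ there. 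Either way a nonzero semisimple operator whose eigenvalues are roots of unity $\ne 1$ coincides with a unipotent one, which is absurd. Hence $\rho$ and $\rho_{t'}$ are both irreducible.

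Finally I would invoke Levelt's rigidity theorem for hypergeometric tuples (the input behind \cite{BLGHT} Lemma 4.4): an irreducible representation of $\pi_1(T_0'(\mathbb{C}),t')$, which is a free group on $\gamma_0,\gamma_\infty$, for which $\gamma_0$ and $\gamma_\infty$ have prescribed characteristic polynomials with disjoint root sets and for which $\gamma_1$ acts by a pseudo-reflection, is unique up to conjugacy in $GL_n(\mathbb{C})$. Both $\rho$ and $\rho_{t'}$ meet these hypotheses, with $\gamma_0$-eigenvalues $\zeta_N^{b_i}$, $\gamma_\infty$-eigenvalues all equal to $1$, and $\zeta_N^{b_i}\ne 1$; so $\rho\cong\rho_{t'}$ over $\mathbb{C}$. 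The main obstacle is securing rigidity in exactly this form: at this stage we know $\rho_{t'}(\gamma_\infty)$ only through its characteristic polynomial $(X-1)^n$, not that it is a single Jordan block (that maximal unipotency is a consequence to be extracted afterwards, \ref{maxun}, not a hypothesis here), so one genuinely needs Levelt's theorem, whose input is only the two local characteristic polynomials plus the pseudo-reflection condition plus irreducibility, rather than a cohomological rigidity statement requiring all three local conjugacy classes to be known in advance.
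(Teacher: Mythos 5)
Your proposal is correct and follows essentially the paper's route: the paper's proof is precisely a citation of Theorem 3.5 of \cite{BH} (Levelt's theorem), applied with the local data of Lemma \ref{3.2}, which is exactly the rigidity statement you invoke (your added irreducibility check is harmless but not needed, since Levelt's uniqueness already follows from the disjointness of the eigenvalue sets and the pseudo-reflection condition).
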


\begin{proof}
See Theorem 3.5 of \cite{BH}. Observe also that $\rho(\gamma_{1})=BA^{-1}$ has the form

$$\left(\begin{array}{lllll}
C_{n} & 0 & \cdots & 0 & 0\\
C_{n-1} & 1 & \cdots & 0 & 0\\
 &  & \ddots &  & \\
C_{2} & 0 & \cdots & 1 & 0\\
C_{1} & 0 & \cdots & 0 & 1
\end{array}\right)
$$

with all the $C_{i}\in \mathbb{Z}[\zeta_{N}]$. 
\end{proof}

Note that the matrix $A$ actually has minimal polynomial $(X-1)^n$ and is conjugate to $\rho_{t'}(\gamma_{\infty})$. We have the following corollary.

\begin{cor}
\label{maxun}
$\rho_{t'}(\gamma_{\infty})$ has minimal polynomial $(X-1)^n$ and hence so is the image of the monodromy around $\infty$ under $\rho_t$.
\end{cor}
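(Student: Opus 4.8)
The plan is to read off the statement for $\rho_{t'}$ directly from Lemma \ref{3.3}, and then transport it to $\rho_t$ through the covering $t\mapsto t^{N}$.

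First, by Lemma \ref{3.3} the representation $\rho_{t'}$ is conjugate over $\C$ to $\rho$, and $\rho(\gamma_{\infty})=A$ is precisely the companion matrix of the polynomial $(X-1)^{n}$. For a companion matrix the minimal polynomial coincides with the characteristic polynomial, so $A$ has minimal polynomial $(X-1)^{n}$; equivalently $A$, and hence $\rho_{t'}(\gamma_{\infty})$ which is conjugate to it, is a single unipotent Jordan block of size $n$. This proves the first assertion.

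Next I would relate the monodromy of $\rho_t$ around $\infty$ to that of $\rho_{t'}$. The local system $V_B$ on $T_0(\C)$ restricts, over $T_0(\C)-\{0\}$, to the pullback of $V_B'$ along the $N$-fold covering $c\colon T_0(\C)-\{0\}\to T_0'(\C)$, $t\mapsto t^{N}$; hence for any loop $\delta$ in $T_0(\C)-\{0\}$ we have $\rho_t(\delta)=\rho_{t'}(c_{\ast}\delta)$ up to the conjugacy coming from base points. A loop around $\infty$ in $T_0(\C)$ can be taken small enough to avoid $0$ and $\mu_N$, and in the local coordinate $s=1/t$ near $\infty$ the covering is $s\mapsto s^{N}$ (since $1/t'=s^{N}$), totally ramified of order $N$ at $\infty$. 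Therefore such a loop is sent by $c$ to a loop homotopic to $\gamma_{\infty}^{\pm N}$, so the image of the monodromy of $\rho_t$ around $\infty$ is conjugate to $\rho_{t'}(\gamma_{\infty})^{\pm N}$.

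Finally, since $\rho_{t'}(\gamma_{\infty})$ is a single unipotent Jordan block of size $n$ over a field of characteristic $0$, so is $\rho_{t'}(\gamma_{\infty})^{\pm N}$: writing $\rho_{t'}(\gamma_{\infty})=1+\nu$ with $\nu$ nilpotent of rank $n-1$, the matrix $(1+\nu)^{\pm N}=1\pm N\nu+\cdots$ has nilpotent part of the same rank $n-1$ because $\pm N$ is invertible, so it is again a single Jordan block of size $n$ and has minimal polynomial $(X-1)^{n}$. This yields the claim for $\rho_t$. The only point requiring a little care is the ramification bookkeeping at $\infty$ and the fact that passing to an $N$-th power preserves the single-Jordan-block property; everything else is immediate from the explicit hypergeometric description in Lemma \ref{3.3}, so I do not expect a genuine obstacle here.
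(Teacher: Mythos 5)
Your proposal is correct and follows essentially the same route as the paper: the paper's justification is exactly the observation that $A$ is the companion matrix of $(X-1)^n$ (so conjugate to $\rho_{t'}(\gamma_\infty)$ with that minimal polynomial), with the transfer to $\rho_t$ left implicit via the totally ramified covering $t\mapsto t^N$ at $\infty$. Your explicit bookkeeping that the loop around $\infty$ maps to $\gamma_\infty^{\pm N}$ and that an $N$-th power of a single unipotent Jordan block remains a single Jordan block in characteristic $0$ is exactly the detail the paper suppresses, and it is carried out correctly.
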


Let $\overline{\rho}:\pi_{1}(T_{0}'(\mathbb{C}),\ t')\rightarrow GL_{n}(\mathbb{Z}[\zeta_{N}]/\lambda)$ be the reduction of $\rho$ with respect to $\lambda$. (Following the argument of proposition 3.3 of \cite{BH}) Then if $\overline{\rho}$ has block upper-triangular form when base changed to $\overline{k(\lambda)}$, we see $\overline{\rho}(\gamma_{1})-1$ would vanish on one of the two blocks since it is a transvection, so that the eigenvalue of $\overline{\rho}(\gamma_{0})$ and $\overline{\rho}(\gamma_{\infty})$ would be the same on that block, which gives a contradiction because none of the $b_i$ is $0$. Thus $\overline{\rho}$ is absolutely irreducible. Let $\overline{\rho}_{t'}:\pi_{1}(T_{0}'(\mathbb{C}),\ t')\rightarrow GL(V_{B,t},/\lambda)$ be the reduction of $\rho_{t'}$ by $\lambda$. It has the same trace with $\overline{\rho}$ by Lemma \ref{3.3}. So their semisimplification are equivalent and thus they are equivalent and $\overline{\rho}_{t'}$ is absolutely irreducible.

\begin{lemma}
\label{3.4}
{\it Assume the residue} fi{\it eld} $k(\lambda)$ {\it of} $\lambda$ {\it is} $\mathbb{F}_{l^{r}}$(So $r$ is the smallest integer such that $N\mid l^r-1$). {\it Under the assumption that} $N\nmid l^{r/2}+1$ {\it if} $r$ {\it is even and $n>2$, we have that}  $\overline{\rho}_{t'}(\pi_{1}(T_{0}'(\mathbb{C}), t'))=SL(V_{B,t}',/\lambda)$ {\it and} $\overline{\rho}_{t}(\pi_{1}(T_{0}(\mathbb{C}), t))=SL(V_{B,t}/\lambda)$.

\end{lemma}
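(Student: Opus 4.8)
The plan is to deduce surjectivity of the geometric monodromy from the classification of subgroups of $SL_n$ generated by a transvection, following the strategy of \cite{BLGHT} Lemma 4.6 (itself built on work of Katz and others), while feeding in the new combinatorial input from Lemma \ref{3.5} to rule out the cases that cannot be excluded by the argument used in the self-dual setting. First I would reduce the claim about $\overline{\rho}_t$ to the claim about $\overline{\rho}_{t'}$: since $T_0(\mathbb{C})-\{0\}\to T_0'(\mathbb{C})$ is a finite cover, $\pi_1(T_0(\mathbb{C})-\{0\},t)$ is a finite-index subgroup of $\pi_1(T_0'(\mathbb{C}),t')$, and the image of $\pi_1(T_0(\mathbb{C})-\{0\})$ in $\pi_1(T_0(\mathbb{C}),t)$ is all of $\pi_1(T_0(\mathbb{C}),t)$ since removing one further point only adds a loop $\gamma_\infty$-type generator already present; so it suffices to show that the image of the finite-index subgroup $\overline{\rho}_{t'}(\pi_1(T_0(\mathbb{C})-\{0\}))$ is still all of $SL_n(k(\lambda))$. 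I would then work with $G:=\overline{\rho}_{t'}(\pi_1(T_0'(\mathbb{C}),t'))\subseteq GL(V_{B,t'}'/\lambda)$ directly.

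Next I would record the structural facts available: $\overline{\rho}_{t'}$ is absolutely irreducible (proved in the paragraph before the lemma), $G$ contains a transvection $\overline{\rho}_{t'}(\gamma_1)$ by Lemma \ref{3.2}(3), and $\det\overline{\rho}_{t'}(\gamma_1)=1$, while the determinant character of $G$ is generated by $\det\overline{\rho}_{t'}(\gamma_0)=\prod\zeta_N^{-b_i}$ and $\det\overline{\rho}_{t'}(\gamma_\infty)=1$. One checks $\sum b_i$ is divisible by $N$ from the explicit lists, so $\det$ is trivial and $G\subseteq SL(V_{B,t'}'/\lambda)$; it remains to prove equality. Here I invoke the classification (Kantor; cf. the version quoted in \cite{BLGHT}) of irreducible subgroups of $GL_m(\mathbb{F})$, $\mathbb{F}$ finite, that are generated by transvections: such a group, if it does not contain $SL_m$, is (after conjugation, and up to scalars) either $Sp_m$, a unitary group $SU_m$, an orthogonal-type group $SO_m$ (or $\mathrm{O}$), or one of a short list of small exceptional groups, or imprimitive (preserving a direct sum decomposition permuted transitively) or tensor-decomposable. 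The primitivity and tensor-indecomposability of $G$ follow from the same transvection-plus-eigenvalue argument already used to prove absolute irreducibility of $\overline{\rho}$: an imprimitive or tensor-decomposable action would force $\overline{\rho}_{t'}(\gamma_\infty)$, which has minimal polynomial $(X-1)^n$ by Corollary \ref{maxun}, to act with too-small Jordan blocks, and would also equate some eigenvalue of $\overline{\rho}_{t'}(\gamma_0)$ with $1$, contradicting $b_i\neq 0$. The exceptional small groups are excluded by the hypothesis $N>100n+100$, which forces $\#k(\lambda)=l^r$ to be large (as $N\mid l^r-1$) and $n$ to be in the correct range.

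The main obstacle, and the place where the new choice of $\chi$ does its work, is ruling out the classical form-preserving cases $Sp_n$, $SU_n$, $SO_n$. In the self-dual situation of \cite{BLGHT} one does \emph{not} rule out $Sp_n$ — it genuinely occurs — but here I must show it does not. The key point is that preservation of a nondegenerate bilinear or sesquilinear form up to scalar would force the multiset of eigenvalues $\{\zeta_N^{b_1},\dots,\zeta_N^{b_n}\}$ of $\overline{\rho}_{t'}(\gamma_0)$ to be stable under $x\mapsto \alpha x^{-1}$ for the appropriate scalar $\alpha$ (in the unitary case, under $x\mapsto \alpha x^{-q}$ after a Frobenius twist, with $q$ a power of $l$), i.e. the set $\{b_1,\dots,b_n\}\subset\mathbb{Z}/N\mathbb{Z}$ must be stable under $b\mapsto c-b$ (resp. $b\mapsto cb^{-1}$-type or $b\mapsto c-qb$ arising from combining the Frobenius on $k(\lambda)$ with the form). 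For the symplectic/orthogonal case this is $\{b_i\}=c-\{b_i\}$ for some $c$, which is visibly false for the explicit lists since the $b_i$ are a small cluster near $\{1,2,\dots\}$ together with $N-3$ or $N-6$ and a cluster near $N/2$, not symmetric about any center — a direct inspection finishes it. For the unitary case one reduces, using that the form is defined over the subfield $\mathbb{F}_{l^{r/2}}$ (so $q=l^{r/2}$) and that $\gamma_0$ has order dividing $N$ with $N\mid l^r-1$ but $N\nmid l^{r/2}+1$ by hypothesis, to the statement that $\{b_i\}$ cannot be stable under $b\mapsto \alpha b$ for a suitable $\alpha\in(\mathbb{Z}/N\mathbb{Z})^\times$ coming from $l^{r/2}$; Lemma \ref{3.5} says any $\alpha$ fixing $\{b_1,\dots,b_n\}$ as a set must be $1$, and $\alpha=1$ would force $l^{r/2}\equiv -1$, i.e. $N\mid l^{r/2}+1$, contradicting the hypothesis. (When $r$ is odd there is no subfield of index $2$ and the unitary case does not arise.) Assembling these exclusions leaves $G=SL(V_{B,t'}'/\lambda)$, and then the reduction in the first paragraph gives $\overline{\rho}_t(\pi_1(T_0(\mathbb{C}),t))=SL(V_{B,t}/\lambda)$ as claimed.
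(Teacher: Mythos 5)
Your overall route — exploit the transvection $\overline{\rho}_{t'}(\gamma_1)$, classify the possible irreducible monodromy groups, and use Lemma \ref{3.5} to kill the self-dual-type cases — is the paper's route, but as written there are genuine gaps. First, you only know that $G=\overline{\rho}_{t'}(\pi_1(T_0'(\mathbb{C}),t'))$ \emph{contains} a transvection, while the classification you invoke is for irreducible groups \emph{generated} by transvections; you never bridge this. The paper does so by a short but essential step: letting $H$ be the normal closure of $\gamma_1$, the quotient $\pi_1(T_0'(\mathbb{C}),t')/H$ is cyclic and is generated both by the class of $\gamma_0$ (whose image has order dividing $N$) and by the class of $\gamma_\infty$ (whose image has $l$-power order), so since $(N,l)=1$ one gets $\overline{\rho}_{t'}(H)=G$, i.e.\ $G$ is generated by conjugates of the transvection, and then the main theorem of \cite{SZ} applies. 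Second, any correct form of that classification includes the subfield forms $SL_n(k)$, $Sp_n(k)$, $SU(n,k)$ for proper subfields $k\subsetneq k(\lambda)$; your list omits them and your exclusions never address them, yet they are not ruled out by determinant, primitivity, or form considerations. The paper eliminates them by observing that a proper-subfield form makes the eigenvalue multiset of $\overline{\rho}_{t'}(\gamma_0)$ stable under a nontrivial power of Frobenius, i.e.\ $\{l^jb_1,\ldots,l^jb_n\}=\{b_1,\ldots,b_n\}$ with $l^j\not\equiv 1 \bmod N$, contradicting Lemma \ref{3.5}. Without this you cannot conclude $G=SL(V_{B,t'}'/\lambda)$.

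Third, your passage from $T_0'$ back to $T_0$ is not closed: you correctly reduce to showing that the image of the index-$N$ normal subgroup $\pi_1(T_0(\mathbb{C})-\{0\},t)$ is still all of $SL_n(k(\lambda))$, but then you only prove this for the full group $G$ and declare the reduction finishes it. It does not by itself: the paper's extra input is that the image of this subgroup is normal in $SL_n(k(\lambda))$ with cyclic quotient of order dividing $N$, and $SL_n(k(\lambda))$ has no nontrivial such quotient since its cyclic composition factors have order dividing $n$ and $(N,n)=1$. Two smaller points: your dismissal of the ``form preserved up to scalar'' case by inspection of $\{b_i\}=c-\{b_i\}$ should be pinned down — summing gives $nc\equiv 2\sum_i b_i\equiv 0 \bmod N$, hence $c\equiv 0$ since $(n,N)=1$, and then $\{b_i\}=\{-b_i\}$ is exactly what Lemma \ref{3.5} with $\alpha=-1$ forbids (the paper avoids the scalar twist altogether because \cite{SZ} yields $Sp_n$ on the nose, and it cites Proposition 6.1 of \cite{BH}); and Corollary \ref{maxun} is a characteristic-zero statement, so to use the minimal polynomial $(X-1)^n$ for $\overline{\rho}_{t'}(\gamma_\infty)$ mod $\lambda$ you should pass through the integral companion matrix of Lemma \ref{3.3} (which stays non-derogatory after reduction) together with the isomorphism $\overline{\rho}\cong\overline{\rho}_{t'}$.
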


\begin{proof}
The case when $n=2$ is already resolved by Lemma 4.6 of \cite{BLGHT}. We now focus in the case $n>2$.

Let $H$ be the normal subgroup of $\pi_{1}(T_{0}'(\mathbb{C}), t')$ generated by $\gamma_{1}$. Then $\pi_{1}(T_{0}'(\mathbb{C}),\ t')/H$ is cyclic, and is generated by $\gamma_{0}H$ or $\gamma_{\infty}H$. Therefore the index $[\overline{\rho}_{t'}(\pi_{1}(T_{0}'(\mathbb{C}),\ t')) : \overline{\rho}_{t'}(H)]$ divides both the order of $\overline{\rho}_{t'}(\gamma_{0})$ and $\overline{\rho}_{t'}(\gamma_{\infty})$. The former is a divisor of $N$ and the latter is an $l$-power, thus  $\overline{\rho}_{t'}(\pi_{1}(T_{0}'(\mathbb{C}), t'))=\overline{\rho}_{t'}(H)$. So $\overline{\rho}_{t'}(\pi_{1}(T_{0}'(\mathbb{C}), t'))$ is generated by transvections, hence by the main theorem of \cite{SZ}, $\overline{\rho}_{t'}(\pi_{1}(T_{0}'(\mathbb{C}),\ t'))$ is conjugate in $GL_{n}(k(\lambda))$ to one of the groups $SL_{n}(k), Sp_{n}(k)$ or $SU(n, k)$ for some subfield $k\subset k(\lambda)$. Here $SU(n, k)$ is defined when $[k:\mathbb{F}_{l}]$ even:
$$
SU(n,\ k):=\{g\in SL_{n}(k): \sigma(g)^{t}g=1_{n}\}
$$
where $\sigma$ is the unique order $2$ element in $\mathrm{G}\mathrm{a}\mathrm{l}(k/\mathbb{F}_{l})$. We want to show $\overline{\rho}_{t'}(\pi_{1}(T_{0}'(\mathbb{C}), t'))= SL_{n}(k(\lambda))$ by excluding the other cases.

If $\overline{\rho}_{t'}(\pi_{1}(T_{0}'(\mathbb{C}), t'))$ is conjugate in $GL_{n}(k(\lambda))$ to one of the groups $SL_{n}(k)$, $Sp_{n}(k)$ or $SU(n, k)$ for some proper subfield $k\subsetneqq k(\lambda)$, then there exists a nontrivial $\sigma\in \mathrm{G}\mathrm{a}\mathrm{l}(k(\lambda)/\mathbb{F}_{l})$ that preserve the eigenvalues for any elements in $\overline{\rho}_{t'}(\pi_{1}(T_{0}'(\mathbb{C}), t'))$. Consider $\overline{\rho}_{t'}(\gamma_{0})$, this would contradict Lemma \ref{3.5}.

\ 

If $\overline{\rho}_{t'}(\pi_{1}(T_{0}'(\mathbb{C}), t'))$ is conjugate to $Sp_{n}(k(\lambda))$ , by Proposition 6.1 of \cite{BH}, we have
$$
\{b_{1},\ldots, b_{n}\}=\{-b_{1},\ldots, -b_{n}\}
$$
which contradicts Lemma \ref{3.5}.

\ 

If $\overline{\rho}_{t'}(\pi_{1}(T_{0}'(\mathbb{C}), t'))$ is conjugate to $SU(n, k(\lambda))$, then we are in the situation $[k(\lambda): \mathbb{F}_{l}]$ is even. Assume $k(\lambda)=\mathbb{F}_{l^{2s}}$. Take the eigenvalue of both sides of the equation $\sigma(\overline{\rho}_{t'}(\gamma_{0}))=({}^{t}\overline{\rho}_{t'}(\gamma_{0}))^{-1}$, we have
$$
\{l^{s}b_{1},\ldots, l^{s}b_{n}\}=\{-b_{1},\ldots, -b_{n}\}
$$
By Lemma \ref{3.5}, we must have $l^s\equiv -1$ mod $N$. This contradicts the condition.

Thus $\overline{\rho}_{t'}(\pi_{1}(T_{0}'(\mathbb{C}), t'))=SL_{n}(k(\lambda))$ . View $\overline{\rho}_t$ as defined on $\pi_{1}(T_{0}(\mathbb{C})- \{0\}, t)$ via the surjection $\pi_{1}(T_{0}(\mathbb{C})-\{0\}, t)\rightarrow\pi_{1}(T_{0}(\mathbb{C}), t)$. Since $\pi_{1}(T_{0}(\mathbb{C})-\{0\}, t)\vartriangleleft \pi_{1}(T_{0}'(\mathbb{C}), t')$ with quotient group cyclic of order $N$, we have $\overline{\rho}_{t}(\pi_{1}(T_{0}(\mathbb{C})-\{0\},\ t))\vartriangleleft SL_{n}(k(\lambda))$ with quotient cyclic of order dividing $N$. Now as the only cyclic composition factor of $SL_{n}(k(\lambda))$ have order dividing $n$, we see $\overline{\rho}_{t}(\pi_{1}(T_{0}(\mathbb{C}), t))=\overline{\rho}_{t}(\pi_{1}(T_{0}(\mathbb{C})-\{0\}, t))=SL_{n}(k(\lambda))$ .

\end{proof} 

Given any nonzero ideal $\n$ of  $\mathbb{Z}[1/2N, \zeta_{N}]$ (of $\mathbb{Z}[1/2N, \zeta_{N}]^+$ when $n=2$) and any finite free rank-$n$ $\mathbb{Z}[\zeta_{N}]/\n$($\mathbb{Z}[\zeta_{N}]^+/\n$ when $n=2$)-module $W$ with a continuous $G_{F}$-action, we can view $W$ as a lisse sheaf on $(\spec F)_{et}$. Now $\wedge^{n}V[\n]$ is a lisse sheaf over $(T_{0})_{F}$ of rank 1, and the associated monodromy representation $\det\rho:\pi_{1}(T_{0}, t)\rightarrow GL(\wedge^{n}V[\n]_{t})$ restricted to $\pi_{1}^{\mathrm{g}\mathrm{e}\mathrm{o}\mathrm{m}}(T_{0}, t)$ is trivial by Abhyankar's lemma and $\det(\gamma_{0})=\det(\gamma_{1})=\det(\gamma_{\infty})=1$. Thus $\det\rho$ factors through $\pi_{1}(\spec F)=G_{F}$.

Suppose we are given an isomorphism of lisse sheaf over $(T_{0})_{F}$ via some prescribed isomorphism of $G_F$ characters:
$$
\phi:\bigwedge^n W_{(T_{0})_{F}}\rightarrow\bigwedge^n V[\n]
$$
Let $\phi_{S}$ denote the base change of $\phi$ to some scheme $S$ over $(T_{0})_{F}$. Define the moduli functor $T_{W}$ as following:
\[
T_{W}: (\mathrm{S}\mathrm{c}\mathrm{h}/(T_{0})_{F}) \rightarrow (\mathrm{Sets})
\]
\[
S\mapsto\{\psi\in \mathrm{I}\mathrm{s}\mathrm{o}\mathrm{m}_{S}(W_{S}, V[n]_{S}): \wedge^{n}\psi=\phi_{S}\}
\]
It is reprsentable by a smooth $T_{W}/(T_{0})_{F}$.

\begin{proposition}
\label{3.6}
 {\it Under the notation and assumption above, if} $\n=\p_{1}\p_{2}$, {\it where} $\p_{1}, \p_{2}$ {\it are two prime ideals of} $\mathbb{Z}[\zeta_{N}]$ {\it having different residue characteristic} $l_{1}, l_{2}$ ({\it prime to} $N$) {\it respectively. If each of the} $l_{i}$ {\it satisfy  the following condition}:

\begin{itemize}

    \item   if $n>2$ the smallest positive $r$ {\it such that} $N\mid l_{i}^{r}-1$ {\it is even, then} $N\nmid l_{i}^{r/2}+1$. 
    
\end{itemize}
and $\mathrm{max}\{l_1, l_2\}>10$, {\it then} $T_{W}$ {\it is geometrically connected}.

\end{proposition}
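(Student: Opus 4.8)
The plan is to exhibit $T_W$ as a torsor under a constant finite group scheme and to reduce geometric connectedness to a surjectivity statement for the geometric monodromy of $V[\n]$, which I would then obtain from Lemma~\ref{3.4} by a Goursat argument.

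First I would split everything via the Chinese Remainder Theorem. Write $k_i=\mathbb{Z}[\zeta_N]/\p_i=\mathbb{F}_{l_i^{r_i}}$, where $r_i$ is the smallest positive integer with $N\mid l_i^{r_i}-1$; then $l_i^{r_i}\equiv 1\bmod N$ forces $|k_i|\ge N+1>100n+100$. Since $\mathbb{Z}[\zeta_N]/\n\cong k_1\times k_2$, we get $V[\n]=V[\p_1]\oplus V[\p_2]$, $W=W_1\oplus W_2$, and $\bigwedge^nV[\n]=\bigwedge^nV[\p_1]\oplus\bigwedge^nV[\p_2]$, compatibly with $\phi$. Over $(T_0)_F$ the scheme $\mathrm{Isom}(W,V[\n])$ of isomorphisms of lisse sheaves is finite étale and a torsor under the constant group $\underline{GL_n(k_1)}\times\underline{GL_n(k_2)}$, and $\bigwedge^n$ carries it, equivariantly over $\det$, onto the $\underline{(k_1\times k_2)^\times}$-torsor $\mathrm{Isom}(\bigwedge^nW,\bigwedge^nV[\n])$; as $\det$ is surjective, the fibre $T_W$ over the section $\phi$ is a nonempty finite étale torsor under $\underline G:=\underline{SL_n(k_1)}\times\underline{SL_n(k_2)}$ (in particular $T_W\ne\varnothing$ over $\overline F$). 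Since $(T_0)_{\overline F}$ is connected, such a torsor is connected if and only if its monodromy homomorphism $\pi_1((T_0)_{\overline F},\bar t)\to G$ is surjective (in general the number of geometric connected components equals the index of the image). Here $\overline F=\overline{\mathbb{Q}}$, and because $\bigwedge^nV[\n]$ has trivial geometric monodromy (Abhyankar's lemma, as recorded above), this torsor-monodromy is, up to conjugation, the image $\Gamma$ of the geometric monodromy of $V[\n]$, which already lies in $G$; after fixing $\overline{\mathbb{Q}}\inj\mathbb{C}$, the comparison isomorphisms of this section together with $\pi_1((T_0)_{\overline{\mathbb{Q}}},\bar t)\cong\widehat{\pi_1(T_0(\mathbb{C}),t)}$ identify $\Gamma$ with the image of $\overline\rho_t\bmod\n:\pi_1(T_0(\mathbb{C}),t)\to GL(V_{B,t}/\p_1)\times GL(V_{B,t}/\p_2)$. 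So it remains to prove $\Gamma=SL_n(k_1)\times SL_n(k_2)$.

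For this, the arithmetic condition imposed on each $l_i$ in the statement is exactly the hypothesis of Lemma~\ref{3.4} for the prime $\p_i$ (and for $n=2$ no such condition is needed), so each of the two projections $\Gamma\to SL_n(k_i)$ is onto. Thus $\Gamma\le SL_n(k_1)\times SL_n(k_2)$ surjects onto both factors, and by Goursat's lemma $\Gamma$ is the graph of an isomorphism $SL_n(k_1)/N_1\cong SL_n(k_2)/N_2$ for normal subgroups $N_i$ of $SL_n(k_i)$. Since $|k_i|>9$ and $n\ge2$, each $SL_n(k_i)$ is quasisimple, so each $N_i$ is either all of $SL_n(k_i)$ — which makes the common quotient trivial and $\Gamma$ the full product, as desired — or central. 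In the latter case the common quotient $Q$ is a perfect central extension of $PSL_n(k_i)$ that also surjects onto $PSL_n(k_{3-i})$; the image in the nonabelian simple group $PSL_n(k_{3-i})$ of the central part of $Q$ is trivial, so that surjection factors through $PSL_n(k_i)$, forcing $PSL_n(k_1)\cong PSL_n(k_2)$. But $|k_1|=l_1^{r_1}$ and $|k_2|=l_2^{r_2}$ are powers of distinct primes, hence unequal, and both exceed $100n+100$; for a fixed $n$ the sole coincidence $PSL_n(\mathbb{F}_q)\cong PSL_n(\mathbb{F}_{q'})$ with $q\ne q'$ is $PSL_2(4)\cong PSL_2(5)$, which cannot occur here (and $\max\{l_1,l_2\}>10$ together with $|k_i|>100n+100$ keeps us off all the exceptional small fields where $SL_n$ is not quasisimple or has an exotic Schur multiplier). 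This contradiction gives $\Gamma=SL_n(k_1)\times SL_n(k_2)$, hence $T_W$ is geometrically connected.

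The main obstacle will be this last step. The two separate surjectivities onto the $SL_n(k(\p_i))$ are already handed to us by Lemma~\ref{3.4}; the content is upgrading them to surjectivity onto the \emph{product}, i.e. ruling out a common nontrivial quotient of $SL_n(k(\p_1))$ and $SL_n(k(\p_2))$. This goes through cleanly only because the residue fields are forced to be large ($|k(\p_i)|\ge N+1>100n+100$), so that $SL_n(k(\p_i))$ is quasisimple and the isomorphism type of $PSL_n(\mathbb{F}_q)$ determines $q$ — exactly where the running assumptions $N>100n+100$ and $\max\{l_1,l_2\}>10$, and indirectly the arithmetic hypotheses feeding Lemma~\ref{3.4}, get used.
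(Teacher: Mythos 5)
Your proposal is correct and follows essentially the same route as the paper: use Lemma \ref{3.4} to get surjectivity onto each $SL_n(k(\p_i))$, then Goursat plus quasisimplicity of $SL_n$ over large fields and the absence (by the classification) of same-$n$, cross-characteristic coincidences among the groups $PSL_n$ to force the geometric monodromy to be all of $SL_n(k(\p_1))\times SL_n(k(\p_2))$, and finally deduce connectedness from the monodromy acting transitively on the geometric fibre of $T_W$. The only differences are expository (your torsor-under-a-constant-group packaging and central-extension form of the Goursat step versus the paper's explicit path-lifting and Jordan--H\"older-factor argument), not a different method.
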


\begin{proof}
Since $\pi_{1}(T_{0}(\mathbb{C}), t)\rightarrow SL(V_{B,t}/\p_1)$ and $\pi_{1}(T_{0}(\mathbb{C}), t)\rightarrow SL(V_{B,t}/\p_2)$ are surjective by Lemma \ref{3.4} and our condition, by Goursat Lemma we see that there exist isomorphic quotient $\phi: SL_{n}(\mathbb{F}_{l_{1}^{r}})/H_1\cong SL_{n}(\mathbb{F}_{l_{2}^{s}})/H_2$ such that the image of $\pi_{1}(T_{0}(\mathbb{C}), t)$ in $SL(V_{B,t}/\n)$ is the preimage of the diagonal $\{(t, \phi(t))\in SL_{n}(\mathbb{F}_{l_{1}^{r}})/H_1\times SL_{n}(\mathbb{F}_{l_{2}^{s}})/H_2\}$ under the natural quotient map. Here we let the residue field of $\p_{1}, \p_{2}$ be $\mathbb{F}_{l_{1}^{r}}, \mathbb{F}_{l_{2}^{s}}$ respectively.

Assume without loss of generality that $l_1>10$. Then the only proper normal subgroups of $SL_{n}(\mathbb{F}_{l_1^{r}})$ are contained in its center and the quotient group $PSL_{n}(\mathbb{F}_{l_1^{r}})$ is a simple group. Thus if $SL_{n}(\mathbb{F}_{l_{1}^{r}})/H_1$ is not trivial, then it must have a Jordan-Holder factor isomorphic to $PSL_{n}(\mathbb{F}_{l_1^{r}})$. Since $l_1>10$, any Jordan-Holder factor of $SL_{n}(\mathbb{F}_{l_{2}^{s}})$ with $l_2\neq l_1$ is not isomorphic to $PSL_{n}(\mathbb{F}_{l_1^{r}})$ by checking the duplication relation in the case $A_n$ of the classification of finite simple groups.(cf. \cite{spgp}). This contradiction gives us $SL_{n}(\mathbb{F}_{l_{1}^{r}})/H_1=1$ and the map $\pi_{1}(T_{0}(\mathbb{C}), t)\rightarrow SL(V_{B,t}/\n)$ is surjective.

Hence for any $t\in T_{0}(\mathbb{C})$ and any two geomereic points of $T_W$ above it which correspond to two isomorphisms $\psi_{1}, \psi_{2}: W\rightarrow V[\n]_{t}$ that respects $\phi$(not necessarily respecting any Galois action because the points are geometric, hence such points always exist), we can pick a path $\gamma\in\pi_{1}^{\mathrm{g}\mathrm{e}\mathrm{o}\mathrm{m}}(T_{0}, t)$ such that its image under the monodromy map is $\psi_{2}\circ\psi_{1}^{-1}$. Going along $\gamma$ induces a path in $T_{W}(\mathbb{C})$ connecting $\psi_{1}$ and $\psi_{2}$ (viewed as points in $T_{W}(\mathbb{C}))$, so geometrically connectedness follows.

\end{proof}

\begin{lemma}
\label{npower}
Let $\zeta_N\in F$, $\lambda$ is a prime of $\Q(\zeta_N)$ ($\Q(\zeta_N)^+$ when $n=2$) over the fixed prime $l$ and $k(\lambda)$ be the residue field,  then viewing $\det V[\lambda]$ as a $G_F$ representation as explained above, we have $\det V[\lambda](G_F)\subset (\mathbb{F}_{l^{2}}^{\times}k(\lambda)^{\times})^{n}$.
\end{lemma}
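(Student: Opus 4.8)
The plan is to compute $\det V[\lambda]$ on the fibre over $t=0$. Since $0\in T_{0}$, the fibre $Y_{0}$ is the Fermat hypersurface $\{X_{1}^{N}+\cdots+X_{N}^{N}=0\}\subset\mathbb{P}^{N-1}$, on which the full group $H$ acts. By Lemma \ref{3.4} the geometric monodromy of $V[\lambda]$ lies in $SL_{n}$, so $\wedge^{n}V[\lambda]$ is a rank one lisse sheaf on $(T_{0})_{F}$ with trivial restriction to $\pi_{1}^{\mathrm{geom}}(T_{0},t)$; hence, exactly as in the paragraph preceding Proposition \ref{3.6}, it descends to a character of $G_{F}$ which may be read off from any fibre. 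Taking the fibre over $0$ and decomposing under $H$ gives an isomorphism of $G_{F}$-characters
\[
\det V[\lambda]\;\cong\;\bigotimes_{i\in\{b_{1},\dots,b_{n}\}}V_{i}[\lambda],
\]
where $V_{i}[\lambda]$ is the $\chi_{i}$-eigenspace of $H^{N-2}(Y_{0},\Z[\zeta_{N}]/\lambda)$ under $H$, free of rank $1$ over $k(\lambda)$ and nonzero precisely for $i\in\{b_{j}\}$ by Proposition I.7.4 of \cite{DMOS}.

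Next one makes the $G_{F}$-action on each $V_{i}[\lambda]$ explicit, using the classical description of the cohomology of Fermat hypersurfaces — either directly through the Jacobi-sum computation of Weil and Deligne, or via the inductive structure of Fermat varieties reducing to the Fermat curve, as in Section 4 of \cite{BLGHT}. The upshot is that $V_{i}[\lambda]$ is the reduction modulo $\lambda$ of an algebraic Hecke character $\psi_{\mathbf{a}+i\mathbf{1}}$ of $\Q(\zeta_{N})$, restricted to $G_{F}$: it is unramified outside the primes above $lN$, crystalline and tamely ramified at $l$, with Hodge--Tate weight at the embedding $\sigma_{k}\colon\zeta_{N}\mapsto\zeta_{N}^{k}$ equal to $q_{k}(\mathbf{a}+i\mathbf{1}):=-1+\tfrac1N\sum_{m=1}^{N}\langle k(a_{m}+i)\rangle_{N}$ (representatives taken in $\{1,\dots,N-1\}$, which is legitimate because $a_{m}+i\neq0$ for $i\in\{b_{j}\}$ by the construction of the $b_{j}$), with Frobenius values the corresponding normalised Jacobi sums. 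Thus $\det V[\lambda]$ is the reduction modulo $\lambda$ of an algebraic Hecke character of $F$ whose $\infty$-type at $\sigma_{k}$ is $\sum_{j}q_{k}(\mathbf{a}+b_{j}\mathbf{1})$.

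The step that uses the awkward shape of $(a_{1},\dots,a_{N})$ is the computation of these $\infty$-types. Since $\{b_{1},\dots,b_{n}\}$ is the complement of $\{-a_{1},\dots,-a_{N}\}$ in $\Z/N\Z$ (an $n$-element set, as the $a_{m}$ take $N-n$ distinct values), one rewrites each inner sum $\sum_{j}\langle k(a_{m}+b_{j})\rangle_{N}$ as a ``complement sum'' $\tfrac{N(N-1)}{2}-\sum_{v}\langle k(a_{m}-v)\rangle_{N}$ over the distinct values $v\neq a_{m}$ of the $a$'s, and sums over $m$. Because the Hodge--Tate weights of $V_{\lambda}$ form a string of $n$ consecutive integers — the ``self-dual shape'' of the weights preserved by the choice of $\chi$, cf. the Hodge-number computation in Section 4 of \cite{BLGHT} — the $\infty$-type of $\det V_{\lambda}$ at each $\sigma_{k}$ has the form $n s_{k}+\binom n2$, so that $\det V_{\lambda}=\mu_{0}^{\,n}\cdot\epsilon_{l}^{-\binom n2}\cdot\eta$ with $\mu_{0}$ an algebraic Hecke character, $\epsilon_{l}$ the $l$-adic cyclotomic character, and $\eta$ of finite order; a closer inspection of the Jacobi-sum normalisations shows $\eta$ is, up to a quadratic twist, of order dividing a power of $N$. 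Reducing modulo $\lambda$: $\det V[\lambda]=\overline{\mu_{0}}^{\,n}\cdot\overline{\epsilon}_{l}^{-\binom n2}\cdot\overline{\eta}$.

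Finally one reads off the image. One takes $\mu_{0}$ with values in $k(\lambda)^{\times}$ (possible because the smallest Hodge--Tate weight of $V_{\lambda}$ is attained, independently of the embedding, by a single $H$-eigenspace), so $\overline{\mu_{0}}^{\,n}$ contributes $n$-th powers in $k(\lambda)^{\times}$; since $\gcd(n,N)=1$, the prime-to-$2$ part of $\overline{\eta}$ has order prime to $n$, hence by Bézout its values too are $n$-th powers in $k(\lambda)^{\times}$. When $n$ is odd, $\binom n2\equiv0\pmod n$, so $\overline{\epsilon}_{l}^{-\binom n2}$ is again an $n$-th power and we are done. When $n$ is even, $\binom n2\equiv n/2\pmod n$, and supplying the missing factor — together with the quadratic part of $\overline{\eta}$ — amounts to extracting a square root of (a twist of) $\overline{\epsilon}_{l}$; such a square root takes values in $\mu_{2(l-1)}\subseteq\F_{l^{2}}^{\times}$, which is exactly why the coefficients must be enlarged to $\F_{l^{2}}^{\times}k(\lambda)^{\times}$. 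Combining, $\det V[\lambda](G_{F})\subseteq(\F_{l^{2}}^{\times}k(\lambda)^{\times})^{n}$. I expect the main obstacle to be this last circle of computations: the explicit Jacobi-sum bookkeeping on $Y_{0}$ with the non-symmetric set $\{b_{j}\}$ — which is where the conditions on $N$ (notably $N>100n+100$) are needed to keep the coefficient field $\F_{l^{2}}^{\times}k(\lambda)^{\times}$ large enough — and the delicate even-$n$ case, where a genuine square root of the cyclotomic character, with the attendant Grunwald--Wang-type subtleties, is what forces and is accommodated by the enlargement to $\F_{l^{2}}^{\times}k(\lambda)^{\times}$.
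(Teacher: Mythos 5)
Your opening moves coincide with the paper's: restrict to the fibre over $0$, so that $\det V[\lambda]\cong\bigotimes_{i\in\{b_1,\ldots,b_n\}}V_i[\lambda]$ as $G_F$-characters, with each $V_i[\lambda]$ described by Gauss/Jacobi sums via \cite{DMOS}. But from there the argument has a genuine gap: the decomposition $\det V_{\lambda}=\mu_0^{\,n}\,\epsilon_l^{-n(n-1)/2}\,\eta$ with $\mu_0$ reducing into $k(\lambda)^\times$ and $\eta$ finite of order (up to a quadratic twist) a power of $N$ is essentially the content of the lemma, and you assert it via ``a closer inspection of the Jacobi-sum normalisations'' without carrying it out. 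An $\infty$-type computation cannot supply it: two characters with the same $\infty$-type differ by an arbitrary finite-order character, and controlling that discrepancy at the level of Frobenius values is exactly the work to be done. The paper does this with the exact Gauss-sum identity
$\prod_{j=1}^{n}\prod_{i=1}^{N}g(v,a_i+b_j)=\bigl(\prod_{j}g(v,b_j)\bigr)^{n}\,q^{n(N-1)/2}/q^{n(n-1)/2}$
(exploiting the $n+1$ zeros among the $a_i$, the description of $\{b_j\}$ as the complement of $\{-a_1,\ldots,-a_N\}$, and $g(v,a)g(v,-a)=q$), together with the verification that $\prod_j g(v,b_j)$ lies in $\Q_l(\zeta_N)$ and hence reduces into $k(\lambda)^\times$; that verification uses $\sum_j b_j\equiv 0 \bmod N$ (and $\{b_1,b_2\}=\{-b_1,-b_2\}$ when $n=2$), a specific arithmetic property of the chosen $b_j$ that your proposal never invokes. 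After this identity there is no finite-order remainder at all: the only non-$n$-th-power factor is $q^{-n(n-1)/2}$, which for even $n$ requires $\sqrt q\in\F_{l^2}^\times$ — this, and nothing about square roots of global characters or Grunwald--Wang, is why $\F_{l^2}^\times$ enters.

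Moreover, the justification you offer for the existence of $\mu_0$ — that the smallest Hodge--Tate weight of $V_\lambda$ is attained, independently of the embedding, by a single $H$-eigenspace — is false: by Lemma \ref{3.7} the eigenspace realizing the minimal weight $M(a)$ at an embedding $\sigma$ with $\sigma^{-1}(\zeta_N)=\zeta_N^{a}$ is the $i\in\{b_1,\ldots,b_n\}$ minimizing $\overline{ai}$, which varies with $a$. Even granting some algebraic Hecke character with the required $\infty$-type (existence of which needs an admissibility argument you do not give), you would still have to show its reduction is $k(\lambda)^\times$-valued and that $\eta$ has the claimed order; neither is addressed, and the Bézout step only helps once those facts are in hand. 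Finally, $N>100n+100$ plays no role in this lemma — it is used for the monodromy combinatorics of Lemma \ref{3.5} — so the ``main obstacle'' you identify is misplaced; the missing ingredient is the explicit Gauss-sum identity and the relation $\sum_j b_j\equiv 0\bmod N$.
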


\begin{proof}
By the analysis before, it suffices to calculate the $G_{F}$ action on $V_{\lambda,0}=\oplus_{i=1}^{N}V_{\lambda,i}=\oplus_{i=1}^{N}H^{N-2}(Y_{0},\ \mathbb{Z}[\zeta_{N}]_{\lambda})^{\chi_{i},H}$, where $Y_{0}$ is the Fermat hypersurface $X_{1}^{N}+\cdots+X_{N}^{N}=0$ in $\mathbb{P}^{N-1}$ and $\chi_{i}: H\rightarrow\mu_{N}$ is a character defined by:
$$
\xi=(\xi_{1},\ldots,  \xi_{N})\mapsto\prod_{j=1}^{N}\xi_{j}^{a_{j}+i}
$$
By \cite{DMOS} Proposition 1.7.10,  $V_{\lambda,i}\neq 0$ (in fact $1$-dimensional) only when $i\in\{b_{1},\ldots, b_{n}\}$, and $\mathrm{F}\mathrm{r}\mathrm{o}\mathrm{b}_{v}$ acts on it by a scalar $q^{-1}\displaystyle \prod_{j=1}^{N}g(v, a_{j}+i)$ , where $v$ is any place of $F$ whose residue characteristic does not divide $N$ or $l$, $q=\# k(v)$, and $g(v, a)$ (for $a\in \mathbb{Z}/N\mathbb{Z}$) is the Gauss sum defined with respect to an fixed additive character $\psi$ : $\mathbb{F}_{q}\rightarrow(\overline{\mathbb{Q}}_{l})^{\times}$:
\[
g(v, a)=-\sum_{x\in \F_{q}^{\times}}t(x^{\frac{1-q}{N}})^{a}\psi(x)
\]
here we also fix an isomorphism $t$ from the group of $N$-th roots of unity in $\mathbb{F}_{q}^\times$ and the group of $N$-th roots of unity in $\overline{\mathbb{Q}}_{l}$. We remark that each $g(v, a)$ depends on the choice of $\psi$ but $q^{-1}\displaystyle \prod_{j=1}^{N}g(v, a_{j}+i)$ does not. 

Thus $\mathrm{F}\mathrm{r}\mathrm{o}\mathrm{b}_{v}$ acts as
$$
q^{-n}\prod_{j=1}^{n}\prod_{i=1}^{N}g(v, a_{i}+b_{j})
$$
under $\det V_{\lambda,0}$.

Considering the choice of $a_{i}$ and $b_{j}$, the product can be rewritten as
\begin{equation}
\begin{split}\label{4.1}
 \prod_{j=1}^{n}\prod_{i=1}^{N}g(v, a_{i}+b_{j})&=(\prod_{j=1}^{n}g(v, b_{j}))^{n}\prod_{j=1}^{n}\prod_{s\neq -b_{k},\forall k}g(v, s+b_{j})\\
 &=(\prod_{j=1}^{n}g(v, b_{j}))^{n}(\prod_{s\neq 0}g(v, s))^{n}/\prod_{i,j\in\{1,\ldots,n\},i\neq j}g(v, b_{j}-b_{i})\\
 &=(\prod_{j=1}^{n}g(v, b_{j}))^{n}(\prod_{s\neq 0}g(v, s))^{n}/\prod_{i,j\in\{1,\ldots,n\},i<j}q\\
 &=(\prod_{j=1}^{n}g(v,\ b_{j}))^{n}(q^{(N-1)/2})^{n}/q^{n(n-1)/2}
\end{split}
\end{equation}
for $v$ whose residue characterisitc is odd, and here $s$ always ranging through the residue class in $\Z/N\Z$. In the last two steps, we use that for any nonzero $a\in \mathbb{Z}/N\mathbb{Z}$,
$$
g(v,a)g(v,-a)=(-1)^{a\frac{1-q}{N}}q=q
$$

We further verify that $\displaystyle \prod_{j=1}^{n}g(v, b_{j})\in \mathbb{Q}_l(\zeta_{N})$ by checking: $\forall\sigma\in G_{\Q_l(\zeta_{N})}$, if $\sigma(\zeta_{p})=\zeta_{p}^{a}$ ($p$ is the residue characteristic of $v$), then
\begin{equation}
\begin{split}\label{4.2eq}
\sigma(\prod_{j=1}^{n}g(v, b_{j}))&=\sigma(\prod_{j=1}^{n}\sum_{x\in \F_{q}^{\times}}-t(x^{\frac{1-q}{N}})^{b_{j}}\psi(x))\\
&=\prod_{j=1}^{n}\sum_{x\in \F_{q}^{\times}}-t(x^{\frac{1-q}{N}})^{b_{j}}\psi(x)^{a}\\
&=\prod_{j=1}^{n}\sum_{x\in \F_{q}^{\times}}-t((a^{-1}x)^{\frac{1-q}{N}})^{b_{j}}\psi(x)\\
&=\prod_{j=1}^{n}t(a^{\frac{q-1}{N}})^{b_{j}}\prod_{j=1}^{n}\sum_{x\in \mathrm{F}_{q}^{\times}}-t(x^{\frac{1-q}{N}})^{b_{j}}\psi(x)\\
&=\prod_{j=1}^{n}g(v, b_{j})
\end{split}
\end{equation}
since $\sum_{j=1}^{n}b_{j}=0$ mod $N$. 

This suffices when $n>2$. When $n=2$, we have to show $\displaystyle \prod_{j=1}^{n}g(v, b_{j})\in \mathbb{Q}_l(\zeta_{N})^+$. For this, it suffices to take a $\sigma\in G_{\mathbb{Q}_l(\zeta_{N})^+}$ such that $\sigma(\zeta_N)=\zeta_N^{-1}$ and $\sigma(\zeta_p)=\zeta_p$, and
\begin{equation}
\begin{split}
\sigma(\prod_{j=1}^{2}g(v, b_{j}))&=\sigma(\prod_{j=1}^{n}\sum_{x\in \F_{q}^{\times}}-t(x^{\frac{1-q}{N}})^{b_{j}}\psi(x))\\
&=\prod_{j=1}^{2}\sum_{x\in \F_{q}^{\times}}-t(x^{\frac{1-q}{N}})^{-b_{j}}\psi(x)\\
&=\prod_{j=1}^{2}\sum_{x\in \F_{q}^{\times}}-t(x^{\frac{1-q}{N}})^{b_{j}}\psi(x)\\
&=\prod_{j=1}^{2}g(v, b_{j})
\end{split}
\end{equation}
because $\{b_1, b_2\}=\{-b_1, -b_2\}$.

Therefore, we deduce that $\det V[\lambda](G_{F})$ lands in $(\mathbb{F}_{l^{2}}^{\times}k(\lambda)^{\times})^{n}$.

\end{proof}

\ 

We now use the comparison theorems to deduce some $p$-adic Hodge theoretic properties of some $V_{\lambda,t}$. Before doing that, let us fix some notation, following \cite{BLGHT}.

Let $\mathcal{H}_{\mathrm{D}\mathrm{R}}$ denote the degree $N-2$ relative de Rham cohomology of $Y$:
$$
\mathcal{H}_{\mathrm{D}\mathrm{R}}=\mathcal{H}_{\mathrm{D}\mathrm{R}}^{N-2}(Y/(T_{0}\times \mathbb{Q}(\zeta_{N})))
$$
It is a locally free sheaf over $T_{0}\times \mathbb{Q}(\zeta_{N})$ with a decreasing filtration $F^{j}\mathcal{H}_{\mathrm{D}\mathrm{R}}$ by local direct summands. For $\sigma\in \mathrm{G}\mathrm{a}\mathrm{l}(\mathbb{Q}(\zeta_{N})/\mathbb{Q})$ , let $\mathcal{H}_{\mathrm{D}\mathrm{R},\sigma}$, $F^{j}\mathcal{H}_{\mathrm{D}\mathrm{R},\sigma}$ be the "twist" of $\mathcal{H}_{\mathrm{D}\mathrm{R}}, F^{j}\mathcal{H}_{\mathrm{D}\mathrm{R}}$ respectively:
$$
\mathcal{H}_{\mathrm{D}\mathrm{R},\sigma}=\mathcal{H}_{\mathrm{D}\mathrm{R}}\otimes_{\sigma^{-1},\mathbb{Q}(\zeta_{N})}\mathbb{Q}(\zeta_{N})\text{ , }F^{j}\mathcal{H}_{\mathrm{D}\mathrm{R},\sigma}=F^{j}\mathcal{H}_{\mathrm{D}\mathrm{R}}\otimes_{\sigma^{-1},\mathbb{Q}(\zeta_{N})}\mathbb{Q}(\zeta_{N})
$$

$H_{0}$ acts on $\mathcal{H}_{\mathrm{D}\mathrm{R}}, F^{j}\mathcal{H}_{\mathrm{D}\mathrm{R}}$ in the usual way. Let $V_{\mathrm{D}\mathrm{R},\sigma}$, $F^{j}V_{\mathrm{D}\mathrm{R},\sigma}$ denote the $\chi$ eigenspace of $\mathcal{H}_{\mathrm{D}\mathrm{R}}, F^{j}\mathcal{H}_{\mathrm{D}\mathrm{R}}$:
$$
V_{\mathrm{D}\mathrm{R},\sigma}=(\mathcal{H}_{\mathrm{D}\mathrm{R},\sigma})^{\chi,H_{0}},\ F^{j}V_{\mathrm{D}\mathrm{R},\sigma}=(F^{j}\mathcal{H}_{\mathrm{D}\mathrm{R},\sigma})^{\chi,H_{0}}
$$
where we view $\mathcal{H}_{\mathrm{D}\mathrm{R},\sigma}$, $F^{j}\mathcal{H}_{\mathrm{D}\mathrm{R},\sigma}$ as $\mathbb{Q}(\zeta_{N})$ vector space by acting on the right. Let $\mathrm{g}\mathrm{r}^{j}V_{\mathrm{D}\mathrm{R},\sigma}=F^{j}V_{\mathrm{D}\mathrm{R},\sigma}/F^{j+1}V_{\mathrm{D}\mathrm{R},\sigma}$ be the associated graded pieces.

Again $H$ acts on $\mathcal{H}_{\mathrm{D}\mathrm{R},\sigma, 0}$, $F^{j}\mathcal{H}_{\mathrm{D}\mathrm{R},\sigma,0}$, and we have:
$$
V_{\mathrm{D}\mathrm{R},\sigma,0}=\bigoplus_{i=1}^{N}V_{\mathrm{D}\mathrm{R},\sigma,i},\ F^{j}V_{\mathrm{D}\mathrm{R},\sigma,0}=\bigoplus_{i=1}^{N}F^{j}V_{\mathrm{D}\mathrm{R},\sigma,i}
$$
here $V_{\mathrm{D}\mathrm{R},\sigma,i}$, $F^{j}V_{\mathrm{D}\mathrm{R},\sigma,i}$ are the subspace of  $V_{\mathrm{D}\mathrm{R},\sigma,0}$, $F^{j}V_{\mathrm{D}\mathrm{R},\sigma,0}$ resp.  where $H$ acts by:
$$
\xi\rightarrow\prod_{j=1}^{N}\xi_{j}^{a_{j}+i}
$$
and we let $\mathrm{g}\mathrm{r}^{j}V_{\mathrm{D}\mathrm{R},\sigma,i}=F^{j}V_{\mathrm{D}\mathrm{R},\sigma,i}/F^{j+1}V_{\mathrm{D}\mathrm{R},\sigma,i}$ be the associated graded pieces. Let $\lambda$ and $v$ be primes of $\mathbb{Z}[\zeta_N]$ both of characteristic $l$. Here $\lambda$ is the prime of the coefficients field as before and $v$ is the place we will restrict to in the $p$-adic Hodge theory setting. If $F$ is a finite extension over $\mathbb{Q}(\zeta_{N})_{v}$ and $t\in T_{0}(F)$, for an embedding $\sigma:F\inj\overline{\mathbb{Q}(\zeta_{N})}_{\lambda}$, by the etale comparison theorem, we have
\[
((H^{N-2}_{et}(Y_{t}\times\overline{F},\ \mathbb{Z}[\zeta_{N}]_{\lambda})\otimes_{\mathbb{Z}[\zeta_{N}]_{\lambda}}\overline{\mathbb{Q}(\zeta_{N})}_{\lambda})\otimes_{\sigma,F}B_{\mathrm{D}\mathrm{R}})^{\gal(\overline{F}/F)}\cong H_{\mathrm{D}\mathrm{R}}^{N-2}(Y_{t}/F)\otimes_{F,\sigma}\overline{\mathbb{Q}(\zeta_{N})}_{\lambda}
\]
as filtered vector space. Taking the $\chi$ eigenspace of $H_{0}$ action on both sides gives (notice the twist):
$$
((V_{\lambda,t}\otimes_{\mathbb{Z}[\zeta_{N}]_{\lambda}}\overline{\mathbb{Q}(\zeta_{N})}_{\lambda})\otimes_{\sigma,F}B_{\mathrm{D}\mathrm{R}})^{\gal(\overline{F}/F)}\cong V_{\mathrm{D}\mathrm{R},\sigma_{|\Q(\zeta_{N})},t}\otimes_{F,\sigma}\overline{\mathbb{Q}(\zeta_{N})}_{\lambda}
$$
as filtered vector space.

Similarly, for $i\in\{1,2,\ldots, N\}$ and $\sigma:\mathbb{Q}(\zeta_{N})_{v}\inj\overline{\mathbb{Q}(\zeta_{N})}_{\lambda}$, view $0\in T_{0}(\mathbb{Q}(\zeta_{N})_{v})$, we have:
\[
((V_{\lambda,i}\otimes_{\mathbb{Z}[\zeta_{N}]_{\lambda}}\overline{\mathbb{Q}(\zeta_{N})}_{\lambda})\otimes_{\sigma,\mathbb{Q}(\zeta_{N})_{v}}B_{\mathrm{D}\mathrm{R}})^{\gal(\overline{\mathbb{Q}(\zeta_{N})_{v}}/\mathbb{Q}(\zeta_{N})_{v})}\cong V_{\mathrm{D}\mathrm{R},\sigma_{|\Q(\zeta_{N})},i}\otimes_{\mathbb{Q}(\zeta_{N})_{v},\sigma}\overline{\mathbb{Q}(\zeta_{N})}_{\lambda}
\]

For $a\in \mathbb{Z}/N\mathbb{Z}$, we will write $\overline{a}$ as the representative element in the range $\{$1, 2, \ldots, $N\}$ of $a$. Let $\tau_{0}:\mathbb{Q}(\zeta_{N})\inj\mathbb{C}$ be the  embedding $: \zeta_N\mapsto e^{2\pi i/n}$. Assume $\sigma^{-1}(\zeta_{N})=\zeta_{N}^{a}$.

\begin{lemma}
\label{3.7}
{\it Under the notation and assumption above, we have}
\begin{enumerate}
    \item $V_{DR,\sigma,i}\neq(0)$ {\it only when} $i\in\{b_{1},\ldots, b_{n}\}$. {\it And for each such $i$}, $V_{DR,\sigma,i}$ {\it is a one-dimensional} $\Q(\zeta_N)$-{\it vector space and}  $\mathrm{g}\mathrm{r}^{j}V_{DR,\sigma,i}\neq 0$ {\it only when}
$$
j=M(a)+\#\{b\in\{b_{1},\ldots, b_{n}\}: \overline{ab}<\overline{ai}\}
$$
{\it here} $M(a)$ {\it is some constant determined by} $a$.

\item $\mathrm{g}\mathrm{r}^{j}V_{DR,\sigma}$ {\it is locally free of rank} 1 {\it over}  $T_{0}\times \mathbb{Q}(\zeta_{N})$ {\it when} $M(a)\leq j\leq M(a)+ n-1$ {\it and is} (0) {\it otherwise}. $V_{DR,\sigma}$ {\it is a locally free sheaf over} $T_{0}\times \mathbb{Q}(\zeta_{N})$ {\it of  rank} $n$.
\end{enumerate}
\end{lemma}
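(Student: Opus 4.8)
The plan is to reduce both assertions to a computation on the fibre over $t=0$, where $Y_0$ is the Fermat hypersurface $X_1^N+\cdots+X_N^N=0$ and the classical description of the cohomology of a Fermat variety as an $H$-module carrying a Hodge structure is available (cf.\ \cite{DMOS}, Proposition 1.7.10, or the analogous computation in Section~4 of \cite{BLGHT}), and then to spread the conclusion out over $T_0\times\Q(\zeta_N)$ by flatness. First I would record that $\mathcal{H}_{DR}=\mathcal{H}_{DR}^{N-2}(Y/(T_0\times\Q(\zeta_N)))$ is locally free and that the Hodge filtration $F^\bullet\mathcal{H}_{DR}$ is by local direct summands, because $\pi\colon Y\to T_0$ is smooth and proper and the base lies over a field of characteristic $0$, so the Hodge--de Rham spectral sequence degenerates in families. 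Since $|H_0|=N^{N-2}$ is invertible on the base, the idempotent cutting out the $\chi$-eigenspace --- together with the idempotent of $\Z[\zeta_N]\otimes\Q(\zeta_N)$ attached to $\sigma$ --- preserves this structure, so $V_{DR,\sigma}$, each $F^jV_{DR,\sigma}$, and hence each $\gr^jV_{DR,\sigma}$ are locally free over $T_0\times\Q(\zeta_N)$. This scheme is connected, so all the ranks in question are constant and may be read off from the fibre over $0\in T_0$.

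On that fibre $V_{DR,\sigma,0}=\bigoplus_i V_{DR,\sigma,i}$, and I would feed the Fermat computation in as follows. Write $\sigma^{-1}(\zeta_N)=\zeta_N^a$, let $\overline{x}$ denote the representative of $x\in\Z/N\Z$ in $\{1,\dots,N\}$, and note that twisting by $\sigma$ rescales the exponents defining $\chi_i$ by $a$. Then: (i) $V_{DR,\sigma,i}=(\mathcal{H}_{DR,\sigma,0})^{\chi_i,H}$ is nonzero --- and then a one-dimensional $\Q(\zeta_N)$-space --- exactly when the tuple $(a_1+i,\dots,a_N+i)$ is nondegenerate modulo $N$, i.e.\ $a_j+i\neq 0$ in $\Z/N\Z$ for every $j$, which by the defining property of the $b_k$ means precisely $i\in\{b_1,\dots,b_n\}$; and (ii) for such $i$ the Hodge filtration on the line $V_{DR,\sigma,i}$ has its single jump at
\[
j=\frac1N\sum_{m=1}^{N}\overline{a(a_m+i)}\;-\;1 .
\]

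It then remains to rewrite this jump. Using that the nonzero entries of $(a_1,\dots,a_N)$ are pairwise distinct with $0$ occurring with multiplicity $n+1$, so that --- by the disjointness of $\{-b_1,\dots,-b_n\}$ from $\{a_m\}$ together with a count --- the multiset $\{a_1,\dots,a_N\}$ is $n$ copies of $0$ together with one copy of each element of $\Z/N\Z\setminus\{-b_1,\dots,-b_n\}$, and using also $\sum_{0\neq x\in\Z/N\Z}\overline{ax}=N(N-1)/2$ together with the fact that $\overline{a(b-b')}-\overline{ab}+\overline{ab'}$ equals $N$ if $\overline{ab'}>\overline{ab}$ and $0$ otherwise (for $b\neq b'$ in $\Z/N\Z$), one computes for $i\in\{b_1,\dots,b_n\}$ that the jump above equals
\[
M(a)+\#\{\,b\in\{b_1,\dots,b_n\}:\overline{ab}<\overline{ai}\,\},\qquad M(a):=\frac{N-1}{2}+\frac1N\sum_{l=1}^{n}\overline{ab_l}-n ,
\]
where $M(a)$ depends only on $a$ (and is automatically an integer, being a difference of Hodge jumps). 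This gives part (1). For part (2), the $\overline{ab_l}$ being pairwise distinct makes $b_k\mapsto\#\{l:\overline{ab_l}<\overline{ab_k}\}$ a bijection $\{b_1,\dots,b_n\}\to\{0,1,\dots,n-1\}$, so among the $n$ lines $V_{DR,\sigma,i}$ each of $M(a),M(a)+1,\dots,M(a)+n-1$ occurs as the jump exactly once; hence $V_{DR,\sigma,0}$ is $n$-dimensional and $\gr^jV_{DR,\sigma,0}$ is one-dimensional for $M(a)\le j\le M(a)+n-1$ and zero otherwise. Constancy of rank then carries these statements over to $V_{DR,\sigma}$ and $\gr^jV_{DR,\sigma}$ on all of $T_0\times\Q(\zeta_N)$.

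The hardest part is really the bookkeeping: the conceptual input is entirely the classical Fermat Hodge computation, so what needs care is (a) correctly accounting for the $\sigma$-twist, which rescales the operative character by $a$ and thereby permutes both the set of nonvanishing eigenlines and their Hodge positions, and (b) the passage from the fractional-part formula for Fermat Hodge numbers to the integer representatives $\overline{x}$ and to the count $\#\{\overline{ab}<\overline{ai}\}$ --- the place where off-by-one errors are easiest --- and matching the deliberate asymmetry of our $(a_1,\dots,a_N)$ and $(b_1,\dots,b_n)$ against the general formula.
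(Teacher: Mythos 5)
Your proposal is correct and takes essentially the same route as the paper: reduce everything to the fibre over $t=0$ via local freeness of $\gr^{j}V_{\mathrm{DR},\sigma}$ over the connected base, invoke the Fermat-hypersurface computation of \cite{DMOS} (nonvanishing of the $\chi_i$-eigenlines exactly for $i\in\{b_1,\ldots,b_n\}$ and the Hodge degree $\frac1N\sum_m\overline{a(a_m+i)}-1$), and then do the bookkeeping with the representatives $\overline{ax}$. The only difference is cosmetic: the paper gets the jump formula by induction on $d$ via $j(d)=(\overline{aa_1+d}+\cdots+\overline{aa_N+d})/N-1$ with $M(a)=j(1)$ left implicit, whereas you derive a closed form for the jump (and hence an explicit $M(a)=\frac{N-1}{2}+\frac1N\sum_l\overline{ab_l}-n$) by the direct summation identity — your multiset description of $(a_1,\ldots,a_N)$ and the identity for $\overline{a(b-b')}$ check out, so the two computations agree.
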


\begin{proof}
Base change to $\mathbb{C}$ gives that
$$
\mathrm{g}\mathrm{r}^{j}V_{\mathrm{D}\mathrm{R},\sigma,i}\otimes_{\mathbb{Q}(\zeta_{N}),\tau_{0}\sigma^{-1}}\mathbb{C}\cong H^{j,N-2-j}(Y_{0}(\mathbb{C}),\ \mathbb{C})_{(a(a_{1}+i),\ldots,a(a_{N}+i))}
$$
where we define $Y(\mathbb{C})$ via the embedding $\tau_{0}$ and right hand side of the isomor-phism is defined as the eigenspace of $H^{j,N-2-j}(Y_{0}(\mathbb{C}), \mathbb{C})$ where $H$ acts by $\xi\rightarrow  \prod_{j=1}^{N}\xi_{j}^{a(a_{j}+i)}$. Proposition I.7.4 and I.7.6 of \cite{DMOS} gives that right hand side is nonzero if and only if

\begin{itemize}
    \item  indices $a(a_{1}+i),\ldots, a(a_{N}+i)$ are all nonzero mod $N$

    \item and
$$
j+1=(a(a_{1}+i)+\ldots+a(a_{N}+i))/N
$$
\end{itemize}
i.e. $i\in\{b_{1},\ldots, b_{n}\}$ and we derive the formula of $j$ for a fixed such $i$ as below. For $1\leq d\leq N$, let
$$
j(d)=(\overline{aa_{1}+d}+\ldots +\overline{aa_{N}+d})/N-1
$$

Note that the nonzero $a_{i}$ only appears once in the sum. Then $j(d+1)=j(d)+1$ if $d\equiv ab_{j}$ mod $N$ for some $b_{j}$ (in this case, none of $\overline{aa_{i}+d}$ is $N$), and $j(d+1)=j(d)$ if otherwise (in this case, exactly one of $\overline{aa_{i}+d}$ is $N$). Use this formula to induct,we see that taking $M(a)=j(1)$ gives the formula in (1).

Since $\mathrm{g}\mathrm{r}^{j}V_{\mathrm{D}\mathrm{R},\sigma}$ is locally free, it suffice to look at the fibre over $0$. Lining up $\overline{ab_{i}}$ in order, we see that the $j$ such that $\mathrm{g}\mathrm{r}^{j}V_{\mathrm{D}\mathrm{R},\sigma,0}\neq 0$ are precisely $M(a),\ldots, M(a)+ n-1$. (2) follows immediately.

\end{proof}

\begin{lemma}
\label{3.8}
{\it Under the notation and assumption above, and let} $t\in F$ {\it as a point in} $T_{0}(F)$ $\zeta_N\in F$, {\it we have}

\begin{enumerate}
    \item  $V_{\lambda,t}$ {\it is a de Rham representation of} $G_{F}$. {\it For} $\sigma:F\inj\overline{\mathbb{Q}(\zeta_{N})}_{\lambda}$, {\it if} $\sigma^{-1}(\zeta_{N})=\zeta_{N}^{a}$, {\it then the Hodge-Tate weight of} $V_{\lambda,t}\otimes_{\mathbb{Z}[\zeta_{N}]_{\lambda}}\overline{\mathbb{Q}(\zeta_{N})_{\lambda}}$ {\it with respect to} $\sigma$ {\it are}
$$
\{M(a),\ M(a)+1,\ldots, M(a)+n-1\}.
$$

    \item {\it If} $t\in \mathcal{O}_{F}$ {\it and} $t^{N}-1\not\in \m_{F}$, {\it then} $V_{\lambda,t}$ {\it is crystalline}.

     \item {\it If} $l\equiv 1$ {\it mod} $N$ {\it and} $t\in \m_{F}$, {\it then} $V_{\lambda,t}$ {\it is ordinary of weight} $(\lambda_{\sigma,i})$ {\it with} $\lambda_{\sigma,i}=M(a_{\sigma}),\ \forall i$, where $a_{\sigma}$ satisfy $\sigma^{-1}(\zeta_{N})=\zeta_{N}^{a_{\sigma}}$ .
     
     \item If $v(t)<0$, {\it then} $V_{\lambda,t}$ {\it is regular and ordinary of weight} $(\lambda_{\sigma,i})$ {\it with} $\lambda_{\sigma,i}=M(a_{\sigma}),\ \forall i$, where $a_{\sigma}$ satisfy $\sigma^{-1}(\zeta_{N})=\zeta_{N}^{a_{\sigma}}$ .

\end{enumerate}
\end{lemma}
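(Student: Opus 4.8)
The plan is to treat the four parts in order, the genuine difficulty lying in (4).

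\emph{Parts (1) and (2).} Since $\pi\colon Y\to T_0$ is smooth and proper, $H^{N-2}_{et}(Y_t\times_F\overline{F},\overline{\Q(\zeta_N)}_\lambda)$ is de Rham as a $G_F$-representation, hence so is the $\chi$-eigenspace $V_{\lambda,t}\otimes\overline{\Q(\zeta_N)}_\lambda$ (it is cut out by an idempotent of $\Z[\zeta_N]_\lambda[H_0]$ with denominator prime to $l$, so it is a direct summand as a $G_F$-representation). To read off the Hodge--Tate weights at an embedding $\sigma$ I use the de Rham comparison isomorphism recorded just before the lemma, which identifies the filtered vector space $(V_{\lambda,t}\otimes_{\Z[\zeta_N]_\lambda}\overline{\Q(\zeta_N)}_\lambda\otimes_{\sigma,F}B_{\mathrm{DR}})^{\gal(\overline{F}/F)}$ with $V_{\mathrm{DR},\sigma|_{\Q(\zeta_N)},t}\otimes_{F,\sigma}\overline{\Q(\zeta_N)}_\lambda$; by Lemma \ref{3.7}(2) its graded pieces are rank one in degrees $M(a_\sigma),\dots,M(a_\sigma)+n-1$ and zero otherwise, giving (1). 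For (2), the hypotheses $t\in\Of_F$ and $t^N-1\notin\m_F$ say exactly that the reduction of $t$ lands in $T_0(k_F)=\mathbb{P}^1(k_F)\setminus(\{\infty\}\cup\mu_N)$; hence $t$ extends to a section $\spec\Of_F\to T_0$ and $Y_t$ spreads out to a smooth proper $\Of_F$-scheme $\Y_t=Y\times_{T_0}\spec\Of_F$. The crystalline comparison theorem for schemes with good reduction then shows $H^{N-2}_{et}(Y_t\times\overline{F},\overline{\Q(\zeta_N)}_\lambda)$ is crystalline, and so is its direct summand; i.e. $V_{\lambda,t}$ is crystalline.

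\emph{Part (3).} If $l\equiv 1\pmod N$ and $t\in\m_F$, then $t^N\in\m_F$, so $t^N-1\notin\m_F$ and part (2) applies: $V_{\lambda,t}$ is crystalline, with Hodge--Tate weights at every $\sigma$ equal to $\{M(1),\dots,M(1)+n-1\}$ by part (1) (note that $l\equiv 1\pmod N$ forces $\Q(\zeta_N)_v=\Q_l$ and hence $a_\sigma=1$ for all $\sigma\colon F\inj\overline{\Q(\zeta_N)}_\lambda$). I then invoke the ordinarity criterion Lemma \ref{2.3} with the constant weight $\lambda_{\sigma,i}=M(1)$; what remains is to match the Frobenius valuations. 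Because $t$ reduces to $0$, the special fibre of $\Y_t/\Of_F$ is the Fermat hypersurface $Y_0\otimes k_F$, and smooth--proper base change for crystalline cohomology identifies the characteristic polynomial of $\phi^{[F_v^0:\Q_l]}$ on $D_{\mathrm{cris},\tau}(V_{\lambda,t})$ with that of the corresponding power of Frobenius on the $\chi$-eigenspace of $H^{N-2}_{\mathrm{cris}}(Y_0\otimes k_F/W(k_F))$. As in the computation of Lemma \ref{npower}, $Y_0$ is already defined over $\F_l$ and this eigenspace decomposes as $\bigoplus_{i\in\{b_1,\dots,b_n\}}$ of rank-one pieces on which Frobenius acts by Gauss/Jacobi sums; the classical ordinariness of the Fermat motive when $l\equiv 1\pmod N$ (Stickelberger, cf.\ \cite{DMOS} I.7) shows the $\chi_i$-eigenvalue has $l$-adic valuation $M(1)+\#\{b\in\{b_j\}:\bar b<\bar i\}$, so these valuations range over $\{M(1),\dots,M(1)+n-1\}$. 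Keeping track of the normalization $\vl_v(l)=v(l)$ and of the $[F:\Q_l]$ embeddings $\tau$ above $v$, this is precisely the condition $\vl_v(\alpha_{v,i})=\sum_\tau(M(1)+n-i)$, so by Lemma \ref{2.3} the representation $V_{\lambda,t}$ is ordinary of weight $\lambda_{\sigma,i}=M(a_\sigma)$ for all $i$.

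\emph{Part (4).} Now $v(t)<0$, so $t$ reduces to $\infty\in\mathbb{P}^1\setminus T_0$ and $Y_t$ has only potentially semistable reduction; this is the hard case and relies on \cite{qian}. Regularity of the weight is immediate from part (1). For ordinarity the plan is: first, use the semistable model of $Y_t$ at $v$ built in \cite{qian} together with the log-crystalline ($C_{\st}$) comparison of Hyodo--Kato to see $V_{\lambda,t}$ is potentially semistable and to realize the monodromy operator $N$ on $D_{\st,\tau}(V_{\lambda,t})$; second, identify this $N$, via the de Rham and Hyodo--Kato comparisons, with the residue at $\infty$ of the Gauss--Manin connection on $V_B$, equivalently with the logarithm of the topological monodromy of $V_B$ around $\infty$; third, note that by Corollary \ref{maxun} this monodromy has minimal polynomial $(X-1)^n$, so $N$ is maximally nilpotent, $N^{n-1}\neq 0$; finally, apply the $p$-adic Hodge-theoretic lemma of \cite{qian} to the effect that a potentially semistable representation with maximally nilpotent $N$ whose Hodge--Tate weights at each $\sigma$ form the string $\{M(a_\sigma),\dots,M(a_\sigma)+n-1\}$ is ordinary of weight $\lambda_{\sigma,i}=M(a_\sigma)$ for all $i$. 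The main obstacle is the second step: making the identification of the $p$-adic monodromy operator with the topological monodromy residue precise and compatible with all the embeddings $\sigma$ and with the $H_0$- and $\chi$-eigenspace decompositions — which is exactly the input imported from \cite{qian}.
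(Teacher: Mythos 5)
Your proposal is correct and, for parts (1), (2) and (4), follows the paper's own route: (1) is the de Rham comparison plus Lemma \ref{3.7}, (2) is good reduction, and (4) is --- both for you and for the paper --- simply the main theorem of \cite{qian} (the paper's entire proof of (4) is that citation, so your sketch of the semistable model, Hyodo--Kato, residue of Gauss--Manin, and maximal unipotence from Corollary \ref{maxun} is a description of that reference, which is all that is required here). Where you genuinely diverge is part (3). The paper notes that, since $t\in\m_F$, the fibres $Y_t$ and $Y_0$ have the same reduction, so $D_{\mathrm{cris}}(V_{\lambda,t})\cong D_{\mathrm{cris}}(V_{\lambda,0})$ as $\phi$-modules while the Hodge--Tate weights agree by (1); by the criterion of Lemma \ref{2.3} one is then reduced to the ordinarity of $V_{\lambda,0}$ itself, which is immediate because $V_{\lambda,0}=\oplus_i V_{\lambda,i}$ splits into one-dimensional crystalline pieces whose inertia action is by the prescribed consecutive powers of the cyclotomic character --- no Frobenius-eigenvalue computation is needed, since for a crystalline character the Newton slope is forced by its Hodge--Tate weight. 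You instead verify the valuation condition of Lemma \ref{2.3} directly on the special fibre, writing the crystalline Frobenius eigenvalues as the Gauss-sum products of Lemma \ref{npower} and invoking Stickelberger for their $l$-adic valuations. This works and yields the same multiset of valuations, but it imports a classical computation (which you assert rather than carry out) that the paper's splitting argument renders unnecessary; the paper's route is the cleaner of the two, while yours has the mild virtue of exhibiting the slopes explicitly. One small correction: when $l\equiv 1\pmod N$ every $\Q_l$-linear embedding $\sigma$ fixes $\zeta_N\in\Q_l$, so all $a_\sigma$ coincide, but they need not equal $1$ --- the common value is determined by the relative position of the place $v$ of $F$ and the coefficient place $\lambda$ --- so your weights should read $M(a)$ rather than $M(1)$; the argument is unchanged by this relabelling.
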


\begin{proof}
(1) is clear from the comparison theorem and Lemma \ref{3.7}.

(2) follows from the fact that these $Y_{t}$ have good reduction modulo the maximal ideal of $F$.

(3) we observe that since $t\in \m_{F}$,
$$
(V_{\lambda,0}\otimes_{\sigma, F}B_{\mathrm{c}\mathrm{r}\mathrm{i}\mathrm{s}})^{\gal(\overline{F}/F)}\cong(V_{\lambda,t}\otimes_{\sigma, F}B_{\mathrm{c}\mathrm{r}\mathrm{i}\mathrm{s}})^{\gal(\overline{F}/F)}
$$
as $\phi$-module because they can both be written as the $\chi$-eigenspace of the crystalline cohomology of the reduction of $Y_{0}$. Moreover, the Hodge-Tate weights of $V_{\lambda,0}$ and $V_{\lambda,t}$ are the same by (1). Thus by Lemma \ref{2.3}, $V_{\lambda,t}$ is ordinary of weight $(M(a_\sigma))$ if and only if $V_{\lambda,0}$ is ordinary of weight $(M(a_\sigma))$ .

Recall $V_{\lambda,0}=\oplus_{i=1}^{N}V_{\lambda,i}$ as $G_{\mathbb{Q}(\zeta_{N})_{v}}=G_{\mathbb{Q}_{l}}$ representation. They are both $\mathbb{Q}(\zeta_{N})_{\lambda}=\mathbb{Q}_{l}$ vector space. Since
\[
((V_{\lambda,i}\otimes_{\mathbb{Z}[\zeta_{N}]_{\lambda}}\overline{\mathbb{Q}(\zeta_{N})}_{\lambda})\otimes_{\sigma,\mathbb{Q}(\zeta_{N})_{v}}B_{\mathrm{D}\mathrm{R}})^{\gal(\overline{\mathbb{Q}(\zeta_{N})_{v}}/\mathbb{Q}(\zeta_{N})_{v})}\cong V_{\mathrm{D}\mathrm{R},\sigma_{|\mathbb{Q}(\zeta_{N})},i}\otimes_{\mathbb{Q}(\zeta_{N})_{v},\sigma}\overline{\mathbb{Q}(\zeta_{N})}_{\lambda}
\]
$V_{\lambda,i}$ is $1$-dimensional when $i\in\{b_{1},\ldots, b_{n}\}$ and has Hodge-Tate weight $M(a_\sigma)+ \#\{b\in\{b_{1},\ldots, b_{n}\}: \overline{a_\sigma b}<\overline{a_\sigma i}\}$ in this case. Thus
$$
V_{\lambda,0}\otimes_{\mathbb{Q}_{l}}\overline{\mathbb{Q}}_{l}\cong\bigoplus_{i=0}^{n-1}\overline{\mathbb{Q}}_{l}(-M(a_\sigma)-i)
$$
as $I_{\mathbb{Q}(\zeta_{N})_{v}}=I_{\mathbb{Q}_{l}}$-representation. Therefore (3) follows.

(4) This is the main theorem of \cite{qian}.

\end{proof}

\section{Proof of Main Result Theorem \ref{1.1}}

We fix a non-CM elliptic curve $E/\mathbb{Q}$. For any prime $l'$, let $\overline{r}_{E, l'}$ be the $G_\Q$ representation $H^1_{et}(E, \F_{l'})$. Write $n=l^{a}m, l\nmid m$

We could find (by Lemma \ref{2N}) a positive integer $N$ satisfying the following properties related only to $\overline{r}, F^{\mathrm{a}\mathrm{v}}$ and $n$ as given in Theorem \ref{1.1}.

\begin{itemize}
    \item $N$ is odd, and is not divisible by any prime factors of $ln$, any prime that is ramified in $F^{\mathrm{a}\mathrm{v}}$ and $F^{\mathrm{k}\mathrm{e}\mathrm{r}\overline{r}}$ and any prime where the elliptic curve $E$ has bad reduction.

    \item  $N>100n+100$

    \item $\mathbb{F}_{l^{2}}\mathbb{F}'\subset \mathbb{F}_{l}(\zeta_{N})$ , where $\mathbb{F}'$ is the finite field generated by all the $m$-th roots(hence {\it n}-th roots) of elements in the field $\mathbb{F}_{l^{s}}$ we choose such that the residual representation $\overline{r}: G_F\rightarrow GL_n(\mathbb{F}_{l^{s}})$. And when $n=2$, we further want that $\F_l(\zeta_N)=\F_{l^r}$ for some $r$ even and $\mathbb{F}_{l^{2}}\mathbb{F}'\subset \mathbb{F}_{l}(\zeta_{N})^+$. These all amounts to the condition that the smallest positive integer $r$ such that $N\mid l^r-1$ is divisible by certain integers.

    \item Let $\F_l(\zeta_N)=\F_{l^r}$. When $r$ is even, we have $N\nmid l^{r/2}+1$.

\end{itemize}

Set  $F^{\mathrm{a}\mathrm{v}\mathrm{o}\mathrm{i}\mathrm{d}}$ to be the normal closure over $\mathbb{Q}$ of $F^{\mathrm{a}\mathrm{v}}\overline{F}^{\mathrm{K}\mathrm{e}\mathrm{r}\overline{r}}(\zeta_l)$. Thus by the condition above, $\Q(\zeta_N)$ and $F^{\av}$ are linearly disjoint over $\Q$, since any rational prime $p$ that is ramified in their intersection has to divide $N$ while also ramified to $F^\av$. Such prime does not exist, so their intersection is unramified over $\Q$ and thus must be $\Q$. Hence $F^{\mathrm{a}\mathrm{v}\mathrm{o}\mathrm{i}\mathrm{d}}$ and $F(\zeta_{N})$ are linearly disjoint over $F$. Following the proof of Corollary 7.2.4 of \cite{tap}, we can prove the following statement:

\begin{proposition}
\label{po4.1}
{\it In the above notation, there exists a rational prime $l'$  such that}:
\begin{itemize}
    \item $ l'\equiv 1$ {\it mod} $N$

    \item $l'>2ln+5$ and is unramified in $F$.

    \item $\overline{r}_{E,l'}(G_{\widetilde{F}})=GL_{2}(\mathbb{F}_{l'})$, {\it here} $\widetilde{F}$ {\it is the normal closure of $F$ over} $\mathbb{Q}$.

    \item $\exists\sigma\in G_{F}-G_{F(\zeta_{l'})}$ {\it such that} $\overline{r}_{E,l'}(\sigma)$ {\it is a scalar}

    \item $E$ {\it has good ordinary reduction at} $l'$.
\end{itemize}

{\it And there exists a} fi{\it nite Galois extension} $F_{2}^{\mathrm{a}\mathrm{v}\mathrm{o}\mathrm{i}\mathrm{d}}/\mathbb{Q}$ {\it and a finite totally real Galois  extension} $F^{\mathrm{s}\mathrm{u}\mathrm{f}\mathrm{f}}/\mathbb{Q}$ {\it unramified above the prime divisors of} $N$ {\it such that}:
\begin{itemize}
    \item  $F_2^\av\cap F^\av=\mathbb{Q}$

    \item $F^\suff\cap F^\av F_2^\av=\mathbb{Q}$
    \item $\overline{\mathbb{Q}}^{\mathrm{k}\mathrm{e}\mathrm{r}\overline{r}_{E,l'}}\subset F_{2}^{\mathrm{a}\mathrm{v}\mathrm{o}\mathrm{i}\mathrm{d}}$

    \item $F^{\mathrm{a}\mathrm{v}\mathrm{o}\mathrm{i}\mathrm{d}}$ {\it and} $F_{2}^{\mathrm{a}\mathrm{v}\mathrm{o}\mathrm{i}\mathrm{d}}$ {\it are unramified above} prime divisors of $N$
\end{itemize}

{\it and for any finite totally real extension} $F'/F^{\mathrm{s}\mathrm{u}\mathrm{f}\mathrm{f}}$ {\it such that} $F'\cap F_{2}^{\mathrm{a}\mathrm{v}\mathrm{o}\mathrm{i}\mathrm{d}}=\mathbb{Q}$, $\mathrm{S}\mathrm{y}\mathrm{m}\mathrm{m}^{n-1}r_{E,l'}|_{G_{F'}}$ {\it is automorphic}.

\end{proposition}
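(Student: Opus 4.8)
The plan is to follow the proof of Corollary 7.2.4 of \cite{tap}: produce the prime $l'$ by a density argument, verify the group-theoretic and $p$-adic Hodge-theoretic hypotheses that feed into potential automorphy, and then run that argument while imposing on the auxiliary fields the asserted linear-disjointness and unramifiedness constraints. Throughout, $\widetilde F$ is the normal closure of $F$ over $\Q$ as in the statement.

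To produce $l'$: by Serre's open image theorem $\overline{r}_{E,l'}\colon G_{\Q}\to GL_2(\F_{l'})$ is surjective for all sufficiently large $l'$, with $\det\overline{r}_{E,l'}$ the mod-$l'$ cyclotomic character. If $l'$ is unramified in $\widetilde F$ then $\widetilde F\cap\Q(\zeta_{l'})=\Q$ (as $\Q(\zeta_{l'})/\Q$ is totally ramified at $l'$), so $\det\overline{r}_{E,l'}|_{G_{\widetilde F}}$ is onto $\F_{l'}^{\times}$; since $G_{\widetilde F}\vartriangleleft G_{\Q}$ the image $\overline{r}_{E,l'}(G_{\widetilde F})$ is normal in $GL_2(\F_{l'})$ with quotient a quotient of the fixed group $\gal(\widetilde F/\Q)$, and as $\mathrm{PSL}_2(\F_{l'})$ is simple and non-abelian for $l'$ large this forces $\overline{r}_{E,l'}(G_{\widetilde F})=GL_2(\F_{l'})$, hence also $\overline{r}_{E,l'}(G_F)=GL_2(\F_{l'})$. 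Picking $\sigma\in G_F$ with $\overline{r}_{E,l'}(\sigma)$ a scalar matrix other than $\pm 1$ gives $\det\overline{r}_{E,l'}(\sigma)\neq 1$, so $\sigma\notin G_{F(\zeta_{l'})}$. Since $E$ is non-CM its primes of good ordinary reduction have density one; intersecting this with the positive-density set of primes $\equiv 1\pmod N$ and discarding the finitely many $l'$ that are small, that are $\leq 2ln+5$, or that ramify in $F$ or in $F^{\av}$, leaves a nonempty set from which $l'$ is chosen.

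Next I would choose $F_2^{\av}$ and check the potential-automorphy hypotheses. Because $N$ is prime to $l'$ and to the bad primes of $E$, the field $\Q(E[l'])=\overline{\Q}^{\ker\overline{r}_{E,l'}}$ is unramified above the primes dividing $N$; and since $l'$ is unramified in $F^{\av}$ while $\gal(\Q(E[l'])/\Q(\zeta_{l'}))\cong SL_2(\F_{l'})$ has no non-trivial abelian quotient for $l'>3$, every subfield of $\Q(E[l'])$ abelian over $\Q$ lies in $\Q(\zeta_{l'})$, so $\Q(E[l'])\cap F^{\av}=\Q$. Thus I can take a finite Galois $F_2^{\av}/\Q$ containing $\Q(E[l'])$, unramified above the primes dividing $N$, with $F_2^{\av}\cap F^{\av}=\Q$. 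From $\overline{r}_{E,l'}(G_{\widetilde F})=GL_2(\F_{l'})$ and $l'>2ln+5$, a computation inside $SL_2(\F_{l'})$ shows $\mathrm{Symm}^{n-1}\overline{r}_{E,l'}$ is absolutely irreducible and $\mathrm{Symm}^{n-1}\overline{r}_{E,l'}(G_{\widetilde F(\zeta_{l'})})$ is enormous in the sense of \cite{tap} Definition 6.2.28; the $\sigma$ above yields $\sigma\in G_F\setminus G_{F(\zeta_{l'})}$ with $\mathrm{Symm}^{n-1}\overline{r}_{E,l'}(\sigma)$ scalar; and decomposed-genericity (\cite{tap} Definition 4.3.1) is arranged by requiring $F^{\suff}$ to split completely a prime $v$, produced by Chebotarev, at which $\mathrm{Symm}^{n-1}\overline{r}_{E,l'}$ is unramified with the required eigenvalue configuration. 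Good ordinary reduction of $E$ at $l'$ gives that $\mathrm{Symm}^{n-1}r_{E,l'}$ is potentially semistable and ordinary with regular Hodge-Tate weights at all places above $l'$.

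Finally I would run the argument of \cite{tap} Corollary 7.2.4. A Moret-Bailly-type theorem in its linearly-disjoint form (as in \cite{CHT}) produces, for any totally real $F'\supseteq F^{\suff}$ with $F'\cap F_2^{\av}=\Q$, a point over $F'$ of the relevant moduli scheme --- equivalently a suitable object over $F'$ (an elliptic curve, or a fibre of the hypergeometric/Dwork family) --- whose mod-$l'$ representation is $\mathrm{Symm}^{n-1}\overline{r}_{E,l'}|_{G_{F'}}$ and whose mod-$l''$ representation, for an auxiliary prime $l''$, is residually ordinarily automorphic; two applications of the ordinary automorphy lifting theorem 6.1.2 of \cite{tap}, first at $l''$ and then at $l'$, using the enormous-image, scalar-element, decomposed-generic and ordinarity hypotheses of the previous step --- which survive restriction to $F'$ exactly because $F'\cap F_2^{\av}=\Q$ and $\Q(E[l'])\subseteq F_2^{\av}$ --- then give that $\mathrm{Symm}^{n-1}r_{E,l'}|_{G_{F'}}$ is ordinarily automorphic. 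I expect the hard part to be the Moret-Bailly step: one must choose the local conditions cutting out the moduli point at a finite set of auxiliary places, selected to avoid the primes dividing $N$ and the ramification of $F^{\av}F_2^{\av}$, so that the resulting $F^{\suff}$ comes out totally real, unramified above the primes dividing $N$, and linearly disjoint from $F^{\av}F_2^{\av}$ over $\Q$; verifying that all these prescribed local conditions are simultaneously realizable --- in particular that demanding unramifiedness above the primes dividing $N$ does not conflict with the local conditions needed to make the point rational and the lifting hypotheses valid --- is the delicate bookkeeping that refining the cited corollary requires.
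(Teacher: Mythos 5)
Your first paragraph (choosing $l'$ by a density argument, with the scalar element obtained from a scalar $u$ with $u^2\neq 1$) is essentially the paper's argument for the first half of the proposition. The genuine gap is in your last paragraph, and it is a quantifier problem at the heart of the statement: you claim that \emph{for each} totally real $F'\supseteq F^{\suff}$ with $F'\cap F_2^{\av}=\Q$ a Moret--Bailly argument produces a point of the relevant moduli scheme \emph{over $F'$ itself}. Moret--Bailly (Proposition \ref{4.2} here, or its linearly disjoint refinements as in \cite{CHT}) never yields points over a prescribed number field: the field carrying the point is an auxiliary finite extension \emph{constructed by the theorem}, controlled only up to linear disjointness and local conditions. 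Run over a given $F'$, your argument therefore only produces automorphy of $\mathrm{Symm}^{n-1}r_{E,l'}$ over some further extension $F''/F'$, i.e.\ potential automorphy over $F'$, which is strictly weaker than the assertion and useless for the way the proposition is applied later in Section 4, where automorphy is needed over the specific field $(F')^+$ that the main Moret--Bailly application has just produced, with no room for a further extension. The mechanism behind Corollary 7.2.4 of \cite{tap}, which the paper simply carries out with the inputs $F^{\av}$, $\mathcal{M}=\{n-1\}$, $\mathcal{L}=\{p: p\mid N\}$, $l=l'$ and then inspects, has the quantifiers the other way around: the Moret--Bailly/Dwork-family step is performed \emph{once}, and is what produces $F^{\suff}$ together with residual congruences linking $\mathrm{Symm}^{n-1}\overline{r}_{E,l'}$ to representations induced from Hecke characters; for an arbitrary admissible $F'$ one merely restricts this data, the induced representations remain automorphic over $F'$, and only the ordinary automorphy lifting theorem is applied over $F'$, its hypotheses holding because $F_2^{\av}$ is built to contain \emph{all} the auxiliary fixed fields occurring in that proof (the fields cut out by the characters $\overline{\psi}_m$, the auxiliary residual representations of Proposition 7.2.3 of \cite{tap}, $\zeta_{l'}$, as well as $\overline{\Q}^{\ker\overline{r}_{E,l'}}$), not merely $\Q(E[l'])$ as in your sketch.

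Two related points. First, your way of arranging decomposed genericity, by asking $F^{\suff}$ to split a well-chosen prime, cannot work with the correct quantifier order: $F^{\suff}$ is fixed before $F'$ is given, and a prime split in $F^{\suff}$ need not behave well in an arbitrary totally real $F'\supseteq F^{\suff}$; in the paper this condition over $F'$ is obtained group-theoretically from largeness of the image and linear disjointness of $F'$ from the relevant normal closures (Lemma 7.1.5(4)/7.1.6 of \cite{tap}, reproved in Section 4). Second, the actual content of this proposition beyond quoting \cite{tap} is precisely the inspection you omit: checking from the internal construction of Corollary 7.2.4 (the fields $L$, $L_2$, $L_3=L_2\overline{\Q}^{\ker\overline{r}_{E,l'}}$, $F_1^{\av}$, and $F_2^{\av}=F_1^{\av}L_3$) that one may take $F_2^{\av}\supseteq\overline{\Q}^{\ker\overline{r}_{E,l'}}$ and that $F^{\av}$, $F_2^{\av}$, $F^{\suff}$ are unramified above the divisors of $N$. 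Decreeing $F_2^{\av}$ to be some Galois field containing $\Q(E[l'])$ with these properties does not give the automorphy conclusion, since that conclusion is only known for the specific fields produced by the cited proof.
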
 

\begin{proof}
We first pick an $l'$ satisfying the listed properties. This can be done because the first condition give a set of primes of positive density. The second, third and fifth condition exclude a set of primes of density $0$(the third by \cite{Ser72} and the fifth by \cite{Ser81} Theorem 20), while the fourth condition follows from the second and the third.(Just pick a $u\in \F_{l'}^\times$ with $u^2\neq 1$ and $\sigma\in G_{\widetilde{F}}$ such that $\overline{r}_{E, l'}(\sigma)=u$.) 

Carry out the proof of Corollary 7.2.4 of \cite{tap} to $F^{\mathrm{a}\mathrm{v}\mathrm{o}\mathrm{i}\mathrm{d}}= F^{\mathrm{a}\mathrm{v}\mathrm{o}\mathrm{i}\mathrm{d}}, E=E, \mathcal{M}=\{n-1\}, \mathcal{L}=$ \{the prime divisors of $N$\} and take the $l$ in the proof to be the rational prime $l'$ we just picked. Note that the properties of $l'$ listed in our proposition implies all the properties of $l$ needed in the first paragraph of proof of Corollary 7.2.4 of \cite{tap}.

Inspecting the proof closely would give that the additional properties (the third and fourth of the lower bullet list) also hold:

The third property follows from the choice of $F_{2}^{\mathrm{a}\mathrm{v}\mathrm{o}\mathrm{i}\mathrm{d}}=F_{1}^{\mathrm{a}\mathrm{v}\mathrm{o}\mathrm{i}\mathrm{d}}L_{3}$ in the 12th line of page 183 and $L_{3}=L_{2}\overline{\mathbb{Q}}^{\mathrm{k}\mathrm{e}\mathrm{r}\overline{r}_{E,l'}}$ in the 14th line of page 182.

For the fourth property, $F^{\mathrm{a}\mathrm{v}\mathrm{o}\mathrm{i}\mathrm{d}}$ unramified over $\mathcal{L}$ follows from the choice of $N$ and $F_{2}^{\mathrm{a}\mathrm{v}\mathrm{o}\mathrm{i}\mathrm{d}}$ unramified above $\mathcal{L}$ follows from

\begin{enumerate}
    \item  $L_{3}$ unramified above $\mathcal{L}$  since $\overline{\mathbb{Q}}^{\mathrm{k}\mathrm{e}\mathrm{r}\overline{r}_{E,l'}}$ is unramified above $\mathcal{L}$ and that each $\overline{\mathbb{Q}}^{\mathrm{k}\mathrm{e}\mathrm{r}\mathrm{I}\mathrm{n}\mathrm{d}_{G_{\mathbb{Q}}}^{G_{L}}\overline{\psi}_{m}}$ is unramified over $\mathcal{L}$ ($\psi_{m}$ is unramified over $\mathcal{L}$, see the properties of $\psi_m$ in the beginning of Page 182 and $L$ is also unramified above $\mathcal{L}$, see the paragraph before the last paragraph in Page 181) gives that their composite $L_{2}$ (page 182) is unramified over $\mathcal{L}$.

    \item $F_{1}^{\mathrm{a}\mathrm{v}\mathrm{o}\mathrm{i}\mathrm{d}}$ is obtained from applying Proposition 7.2.3 of \cite{tap} to $F=F_{0}=\mathbb{Q}$ (the $F$ is that in the proposition, not our $F$), $\{r_{m},\ m\in \mathcal{M}\}$, $\mathcal{L}$ and $F^{\mathrm{a}\mathrm{v}\mathrm{o}\mathrm{i}\mathrm{d}}L_{3}$ (See the second paragraph of Page 183). In terms of the proof of Proposition 7.2.3 of \cite{tap}, $F_{1}^{\mathrm{a}\mathrm{v}\mathrm{o}\mathrm{i}\mathrm{d}}=\overline{\mathbb{Q}}^{\mathrm{k}\mathrm{e}\mathrm{r}\prod_{i}\overline{r}_{i}'}(\zeta_{l'})$ with $\overline{r}_{i}'$ unramified above $\mathcal{L}$ and $l'\notin \mathcal{L}$ (See the last line of Page 180).
\end{enumerate}

\end{proof}

Note that the condition on $N, l, l'$ guarantess the hypothesis of $N, l, l'$ in section \ref{3} are all satisfied.

Now, apply Lemma \ref{2.1} to $F=F, M=F(\zeta_{N}), F_{0}=F^{\mathrm{a}\mathrm{v}\mathrm{o}\mathrm{i}\mathrm{d}}F_{2}^{\mathrm{a}\mathrm{v}\mathrm{o}\mathrm{i}\mathrm{d}}F^{\mathrm{s}\mathrm{u}\mathrm{f}\mathrm{f}}(\zeta_N)$,  to see we may take a finite CM Galois extension $E/F$ with $E=LM$ for some totally real Galois extension $L/\Q$ such that $L$ and $F_0$ are linearly disjoint over $\Q$, and that we may find some characters $\chi_{1}$ : $\gal(\overline{F}/E)\rightarrow(\mathbb{Z}[\zeta_{N}]/\lambda)^{\times}$ and $\chi_{2}$ : $\gal(\overline{F}/E)\rightarrow(\mathbb{Z}[\zeta_{N}]/\lambda')^{\times}$ such that $(\overline{\chi_{1}}\times\overline{\chi_{2}})^{n}\cong(\det V[\lambda\lambda'])\otimes\det(\overline{r}\times \mathrm{S}\mathrm{y}\mathrm{m}\mathrm{m}^{n-1}r_{E,l'})^{\vee}$ as $G_{E}$-module. The condition of Lemma \ref{2.1} is verified below.

On the side of characteristic $l$, fix a prime $\lambda$ of $\Q(\zeta_N)$ ($\Q(\zeta_N)^+$ when $n=2$) and denote the residue field by $k(\lambda)$. Note that the condition of $N$ in the beginning of this section gives that $\det\overline{r}$ actually has image landing in $(k(\lambda)^{\times})^{n}$. We also assumed that $\F_{l^2}\subset k(\lambda)$, $n\mid \#(k(\lambda)^\times)$. Hence, applying Lemma \ref{npower} to the field $F(\zeta_N)$, we see that on the characteristic $l$ side, we have $\det V[\lambda]\otimes (\det \overline{r})^\vee$ (as a $G_{F(\zeta_N)}$ representation) has image in $(k(\lambda)^{\times})^{n}$.

On the other side of characteristic $l'$, fixing a prime $\lambda'$ of $\mathbb{Z}[\zeta_{N}]$ ($\mathbb{Z}[\zeta_{N}]^+$ when $n=2$) over $l'$, we have that $\det \mathrm{S}\mathrm{y}\mathrm{m}\mathrm{m}^{n-1}\overline{r}_{E,l'}=(\chi_{\mathrm{c}\mathrm{y}\mathrm{c}})^{n(n-1)/2}$, which have image in $\mathbb{F}_{l^2}^\times$. Applying Lemma \ref{npower} to the prime $l'$ to see that  $(\det\text{Symm}^{n-1}\overline{r}_{E,l'})^{\vee}\otimes (\det V[\lambda'])$ (as a $G_{F(\zeta_N)}$ representation) has image in $(k(\lambda')^{\times})^{n}$.

Let $W$ be the $\mathbb{Z}[\zeta_{N}]/\lambda\lambda'$-module with a $G_E$ action given by the representation $(\overline{\chi_{1}}\otimes\overline{r})\times(\overline{\chi_{2}}\otimes \mathrm{S}\mathrm{y}\mathrm{m}\mathrm{m}^{n-1}\overline{r}_{E,l'})$. The isomorphism $(\overline{\chi_{1}}\times\overline{\chi_{2}})^{n}\cong(\det V[\lambda\lambda'])\otimes\det(\overline{r}\times \mathrm{S}\mathrm{y}\mathrm{m}\mathrm{m}^{n-1}r_{E,l'})^{\vee}$ induces an isomorphism
$$
\phi:\bigwedge^n W_{(T_{0})_{E}}\rightarrow\bigwedge^n V[\lambda\lambda']
$$
. In this way, the moduli functor $T_{W}$ is well-defined by $\phi$ over $E.$

We see that the conditions of Proposition \ref{3.6} are satisfied for $N$ and $l, l'$. Thus $T_{W}$ is geometrically connected.

Note that $F^{\mathrm{a}\mathrm{v}\mathrm{o}\mathrm{i}\mathrm{d}}F_{2}^{\mathrm{a}\mathrm{v}\mathrm{o}\mathrm{i}\mathrm{d}}F^{\mathrm{s}\mathrm{u}\mathrm{f}\mathrm{f}}$ and $M=F(\zeta_N)$ are linearly disjoint over $F$ because $F^{\mathrm{a}\mathrm{v}\mathrm{o}\mathrm{i}\mathrm{d}}F_{2}^{\mathrm{a}\mathrm{v}\mathrm{o}\mathrm{i}\mathrm{d}}F^{\mathrm{s}\mathrm{u}\mathrm{f}\mathrm{f}}$ and $\mathbb{Q}(\zeta_{N})$ linearly disjoint over $\mathbb{Q}$, which in turn comes from $F^{\mathrm{a}\mathrm{v}\mathrm{o}\mathrm{i}\mathrm{d}}, F_{2}^{\mathrm{a}\mathrm{v}\mathrm{o}\mathrm{i}\mathrm{d}}, F^{\mathrm{s}\mathrm{u}\mathrm{f}\mathrm{f}}$ all unramified over the prime divisors of $N$.

Since $L$ is linearly disjoint with $F_0$ over $\Q$ and $M\subset F_0$, we have that $E=LM$ and $F_0$ are linearly disjoint over $M$. Now $E$ and $F^{\mathrm{a}\mathrm{v}\mathrm{o}\mathrm{i}\mathrm{d}}F_{2}^{\mathrm{a}\mathrm{v}\mathrm{o}\mathrm{i}\mathrm{d}}F^{\mathrm{s}\mathrm{u}\mathrm{f}\mathrm{f}}$ are linearly disjoint over $F$, because $E\cap F^{\mathrm{a}\mathrm{v}\mathrm{o}\mathrm{i}\mathrm{d}}F_{2}^{\mathrm{a}\mathrm{v}\mathrm{o}\mathrm{i}\mathrm{d}}F^{\mathrm{s}\mathrm{u}\mathrm{f}\mathrm{f}}=LM\cap F_0\cap F^{\mathrm{a}\mathrm{v}\mathrm{o}\mathrm{i}\mathrm{d}}F_{2}^{\mathrm{a}\mathrm{v}\mathrm{o}\mathrm{i}\mathrm{d}}F^{\mathrm{s}\mathrm{u}\mathrm{f}\mathrm{f}}=M\cap F^{\mathrm{a}\mathrm{v}\mathrm{o}\mathrm{i}\mathrm{d}}F_{2}^{\mathrm{a}\mathrm{v}\mathrm{o}\mathrm{i}\mathrm{d}}F^{\mathrm{s}\mathrm{u}\mathrm{f}\mathrm{f}}=F$

We will need a  theorem of Moret-Bailly from \cite{HSBT}.

\begin{proposition}
\label{4.2}
{\it Let} $F$ {\it be a number} fi{\it eld and let} $S=S_{1}\amalg S_{2}\amalg S_{3}$ {\it be a} fi{\it nite set  of places of}  $F$, {\it so that every element of} $S_{2}$ {\it is non-archimedean. Suppose that} $T/F$ {\it is a smooth, geometrically connected variety. Suppose also that}
\begin{itemize}
    \item {\it for} $v\in S_{1}$ , $\Omega_{v}\subset T(F_{v})$ {\it is a non-empty open subset} ({\it for the} $v$-{\it topology})
    \item {\it for} $v\in S_{2}$ , $\Omega_{v}\subset T(F_{v}^{\text{nr}})$ {\it is a non-empty open} $\mathrm{G}\mathrm{a}\mathrm{l}(F_{v}^{nr}/F_{v})$-{\it invariant  subset}.

    \item {\it for} $v\in S_{3}, \Omega_{v}\subset T(\overline{F}_{v})$ {\it is a non-empty open} $\gal(\overline{F}_{v}/F_{v})$-{\it invariant subset}.
\end{itemize}

Suppose finally that $H/F$ is a finite Galois extension. Then there is a finite Galois extension $F'/F$  and a point $P\in T(F')$  such that:

\begin{itemize}
    
    \item $F'/F$ {\it is linearly disjoint from} $H/F$

    \item {\it every place} $v$ {\it of} $S_{1}$ {\it splits completely in} $F'$ {\it and if} $w$ {\it is a prime of} $F'$ {\it above} $v$, {\it then} $P\in\Omega_{v}\subset T(F_{w}')$

    \item {\it every plae} $v$ {\it of} $S_{2}$ {\it is unramified in} $F'$ {\it and if} $w$ {\it is a prime of} $F'$ {\it above $v$,then} $P\in\Omega_{v}\cap T(F_{w}')$

    \item {\it if} $w$ {\it is a prime of} $F'$ {\it above some $v$} $\in S_{3}$, {\it then} $P\in\Omega_{v}\cap T(F_{w}')$.
\end{itemize}
\end{proposition}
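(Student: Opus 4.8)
This is the form of Moret-Bailly's theorem recalled in \cite{HSBT}, and the plan is to follow that circle of ideas: reduce to the case of a smooth affine curve, and then solve the curve case by a Skolem-type construction carried out on a very ample linear system, i.e. on a rational parameter space over which both weak approximation and Hilbert irreducibility are available. \textbf{Reduction to a curve.} If $\dim T=0$ the statement is trivial, so assume $\dim T\geq 1$. Since $T$ is smooth and geometrically connected it is geometrically integral, hence for any nonempty Zariski-open $U\subseteq T$ and any finite extension $E$ of $F_v$ the set $U(E)$ is $v$-adically dense in $T(E)$ (the complement $T\setminus U$ has smaller dimension, so by the implicit function theorem $(T\setminus U)(E)$ is nowhere dense in the $v$-adic manifold $T(E)$). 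Thus, after replacing $T$ by an affine open $T\subseteq\A^M$ and each $\Omega_v$ by its nonempty open intersection with $T$, we may assume $T$ affine. Fix $x_v\in\Omega_v$ for $v\in S$, taking $x_v\in T(E_v)$ with $E_v/F_v$ finite unramified for $v\in S_2$ and $E_v/F_v$ finite for $v\in S_3$. A Bertini argument with simultaneous $v$-adic approximation then yields a smooth geometrically irreducible affine curve $C\subseteq T$ over $F$ passing $v$-adically close to each $x_v$: the cutting linear forms range over an $F$-variety, ``the section is a smooth geometrically irreducible curve'' is a Zariski-dense condition on it, ``it passes close to $x_v$'' is $v$-adically open, and $F$ is dense in $\prod_{v\in S}F_v$. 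Shrinking $C$, we keep $C(F_v)\cap\Omega_v\neq\emptyset$, and the analogue over $F_v^{\mathrm{nr}}$ and over $\overline F_v$.

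\textbf{The curve case.} Let $\overline C$ be the smooth projective model of $C$ and fix a very ample line bundle $L$ on $\overline C$ of large degree $d$; the complete linear system $|L|$ is a projective space $\mathbb{P}^n$ over $F$ parametrizing the effective degree-$d$ divisors on $\overline C$ linearly equivalent to $L$. A prime such divisor $D$ is a closed point of $\overline C$, so $F':=F(D)$ has degree $d$ over $F$ and $D$ determines a point of $\overline C(F')$. For each $v\in S$ I would fix a local divisor $D_v\in|L|(F_v)$ which is étale, supported in $C$, and has a point $v$-adically near $x_v$, and moreover: for $v\in S_1$, totally split over $F_v$; for $v\in S_2$, a sum of closed points all with residue field unramified over $F_v$. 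A short $v$-adic geometry argument — using that $L$ is ample of large degree, so $|L|$ contains an abundance of such divisors, together with Krasner's lemma for the stability of the splitting type under approximation — shows these requirements cut out nonempty $v$-adic open subsets of $|L|(F_v)$. Since $|L|\cong\mathbb{P}^n$ satisfies weak approximation and Hilbert irreducibility over $F$ (indeed over $FH$), one can then choose $D\in|L|(F)$ approximating every $D_v$, prime, with $\mathrm{Gal}(F'/F)\cong S_d$ (the monodromy of the universal hyperplane section of a curve is the full symmetric group) and with $F'$ linearly disjoint from $H$. The resulting $F'$ is Galois over $F$ and linearly disjoint from $H$; because $D$ is totally split over $F_v$ for $v\in S_1$ and has unramified residue fields above $v\in S_2$, the places of $S_1$ split completely in $F'$ and those of $S_2$ are unramified in $F'$; and the point $P\in C(F')$ given by $D$ lies in $\Omega_v$ at every $v\in S$, the $S_3$ conditions being automatic. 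Since $P\in C(F')\subseteq T(F')$, this finishes the proof.

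\textbf{Main obstacle.} The substantive point — the heart of Moret-Bailly's argument — is the construction in the curve case: producing a single global prime divisor $D$ that simultaneously (i) $v$-adically approximates the prescribed local divisors, (ii) has the prescribed splitting and ramification behavior of its residue field at the places of $S_1\cup S_2$, and (iii) has an $S_d$-generic splitting field linearly disjoint from $H$. This is ``Hilbert irreducibility with imposed local conditions'' (the Skolem property), and the delicate part is to verify that the thin set one must avoid is genuinely compatible with the finitely many $v$-adic open constraints rather than accidentally absorbing them. The reductions to the affine and curve cases, and the treatment of $S_3$, are routine once this is in place.
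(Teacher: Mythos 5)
The paper does not prove this proposition at all: it is quoted verbatim as a known theorem of Moret-Bailly, in the form given as Proposition 2.1 of \cite{HSBT}, and is used as a black box. So your attempt is not being compared with an argument in the paper but with Moret-Bailly's own proof, and as such it falls short of being a proof: the step you yourself flag as the ``main obstacle'' --- producing a single irreducible divisor $D$ that simultaneously approximates the prescribed local divisors, has the prescribed splitting/ramification over $S_1\cup S_2$, has large Galois image, and gives a field linearly disjoint from $H$ --- \emph{is} the content of Moret-Bailly's theorem (the Skolem problem with local conditions). Describing it as ``Hilbert irreducibility with imposed local conditions'' and deferring its verification means the substantive mathematical work is not done; everything you do carry out (reduction to an affine curve, Bertini with approximation, the use of a very ample linear system) is the routine part.

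Two concrete defects in the sketch itself. First, you only ask that each local divisor $D_v$ ``has a point $v$-adically near $x_v$.'' The conclusion requires $P\in\Omega_v\subset T(F'_w)$ for \emph{every} place $w$ of $F'$ above $v$, i.e. every point of $D$ lying over $v\in S$ must land in $\Omega_v$ (Galois-invariance of $\Omega_v$ only reduces this to one point per $G_{F_v}$-orbit); with your condition, the other branches of $D$ over $v$ can sit anywhere on the curve and the conclusion fails for those $w$. You must impose membership in (a small neighbourhood inside) $\Omega_v$ on the whole local divisor, totally split for $v\in S_1$ and with unramified residue fields for $v\in S_2$. Second, you set $F'=F(D)$ of degree $d$ and simultaneously claim $\gal(F'/F)\cong S_d$; these are incompatible for $d\geq 3$, and the proposition demands $F'/F$ Galois. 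The repair is to take $F'$ to be the Galois closure of $F(D)$, and then one must check that complete splitting at $S_1$ and unramifiedness at $S_2$ persist (they do, being stable under compositum and Galois closure), that $P\in T(F(D))\subset T(F')$ still satisfies the $\Omega_v$-conditions at all $w\mid v$, and that linear disjointness from $H$ holds for the closure --- the latter is where your parenthetical ``Hilbert irreducibility over $FH$'' must actually be invoked, since $K\cap H=F$ follows from $\gal(KH/H)\cong S_d$ for the splitting field $K$. Until the central Skolem-type existence statement is proved rather than postulated, the proposal remains an outline of Moret-Bailly's argument rather than a proof of the proposition.
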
 

\

Let $F^{+}\subset F, E^{+}\subset E$ be the maximal totally real subfield respectively. We apply Proposition \ref{4.2} to  the smooth geometrically connected variety $T={\rm Res}_{\Q}^{EF^{\mathrm{s}\mathrm{u}\mathrm{f}\mathrm{f}}}T_{W}$ defined over $\Q$. We take $H=F_0L=F_0E$.

We take $S_{1}=\{\infty\}$,  $S_{2}=\emptyset$ and $S_{3}=\{l, l'\}$. For $v\in S_1$, we take $\Omega_{v}={\rm Res}_{\Q}^{EF^{\mathrm{s}\mathrm{u}\mathrm{f}\mathrm{f}}}T_{W}(\R)$, i.e. the whole set which is clearly open and non-empty since each copy of $T_W(\C)$ are non-empty. For $v\in S_{3}$, there exists an algebraic morphism $p: T\rightarrow \res^{EF^\suff}_{\Q}T_0$ and we define 
\[
\Omega_{l,0}=\{t=(t_\tau)\in \res^{EF^\suff}_{\Q}T_0(\overline{\Q}_l)=\prod_{\tau: EF^\suff\inj\overline{\Q}_l}T_{0, \tau}(\overline{\Q}_l) \ |\  v_l(t_\tau)<0, \forall \tau\}
\]
\[
\Omega_{l', 0}=\{t=(t_\tau)\in \res^{EF^\suff}_{\Q}T_0(\overline{\Q}_{l'})=\prod_{\tau: EF^\suff\inj\overline{\Q}_{l'}}T_{0, \tau}(\overline{\Q}_{l'})\  |\  v_{l'}(t_\tau)>0, \forall \tau\}
\]
and we define $\Omega_l=p^{-1}(\Omega_{l, 0})$, $\Omega_{l'}=p^{-1}(\Omega_{l', 0})$. Both sets are clearly open, non-empty and Galois invariant.

Hence, we get a finite totally real Galois extension $L'/\Q$ with $L'$ linearly disjoint with $F_0L$ over $\Q$ and a point $ t\in T_{0}(L'EF^\suff)$ (because $L'$ and $EF^\suff$ are linearly disjoint over $\Q$) such that if we denote $L'EF^\suff$ by $F'$ and $L'E^+F^\suff$ by $(F')^+$ then
\begin{itemize}
    \item  $F'\supset EF^{\mathrm{s}\mathrm{u}\mathrm{f}\mathrm{f}}$ is a CM Galois extension over $F$

\item $\overline{\chi_{1}}^{-1}V[\lambda]_{t}\cong\overline{r}|_{G_{F'}}$

\item $\overline{\chi_{2}}^{-1}V[\lambda']_{t}\cong \overline{\mathrm{S}\mathrm{y}\mathrm{m}\mathrm{m}^{n-1}r_{E,l'}}|_{G_{F'}}$

\item  $v(t)<0$ for all primes $v|l$ of $F'$

\item $v(t)>0$ for all primes $v|l'$ of $F'$
\end{itemize}

Now $ F'\cap F_2^\av=L'EF^\suff\cap F_0L\cap F_2^\av=EF^\suff\cap F_2^\av$ because $L'$ and $F_0L$ are linearly disjoint over $\Q$. Moreover, $EF^\suff\cap F_2^\av=LMF^\suff\cap F_0\cap F_2^\av=MF^\suff \cap F_2^\av$ because $L$ and $F_0$ are linearly disjoint over $\Q$. Furthermore, $MF^\suff \cap F_2^\av=F^\suff F(\zeta_N)\cap F^\suff F^\av F_2^\av\cap F_2^\av=F^\suff F\cap F_2^\av$ because $F^{\mathrm{a}\mathrm{v}\mathrm{o}\mathrm{i}\mathrm{d}}F_{2}^{\mathrm{a}\mathrm{v}\mathrm{o}\mathrm{i}\mathrm{d}}F^{\mathrm{s}\mathrm{u}\mathrm{f}\mathrm{f}}$ and $\mathbb{Q}(\zeta_{N})$ linearly disjoint over $\mathbb{Q}$. Finally, $F^\suff F\cap F_2^\av=F^\suff F\cap F^\av F_2^\av\cap F_2^\av=F\cap F_2^\av\subset F^\av\cap F_2^\av=\Q$ because $F^\suff$ and $F^\av F_2^\av$ are linearly disjoint over $\Q$. Therefore,  we conclude that $F'\supset F^\suff$ and is linearly disjoint with $F_2^\av$ over $\Q$, so we see by Proposition \ref{po4.1} that $\mathrm{S}\mathrm{y}\mathrm{m}\mathrm{m}^{n-1}r_{E,l'}|_{G_{(F')^+}}$ and hence $\mathrm{S}\mathrm{y}\mathrm{m}\mathrm{m}^{n-1}r_{E,l'}|_{G_{F'}}$  is automorphic. Since $F'$ and $\overline{\Q}^{\mathrm{K}\mathrm{e}\mathrm{r}\overline{r}_{E,l'}}(\subset F_{2}^{\mathrm{a}\mathrm{v}\mathrm{o}\mathrm{i}\mathrm{d}})$ are linearly disjoint over $\Q$, we again have
\begin{itemize}
    \item $\overline{r}_{E,l'}(G_{F'})\supset SL_{2}(\mathbb{F}_{l'})$

\item $\exists\sigma\in G_{F'}-G_{F'(\zeta_{l'})}$ such that $\overline{r}_{E,l'}(\sigma)$ is a scalar
\end{itemize}

Note that by similar reasoning as the previous paragraph, we have that $ F'\cap F^\av F_2^\av=L'EF^\suff\cap F_0L\cap F^\av F_2^\av=EF^\suff\cap F^\av F_2^\av=LMF^\suff\cap F_0\cap F^\av F_2^\av=MF^\suff \cap F^\av F_2^\av=F^\suff F(\zeta_N)\cap F^\suff F^\av F_2^\av\cap F^\av F_2^\av=F^\suff F\cap F^\av F_2^\av=F$  and thus $F'$ is linearly disjoint with $F^{\mathrm{a}\mathrm{v}}$ over $F$ as we wanted in the main theorem.

Let $\chi_2: G_E\rightarrow \Q(\zeta_N)^\times$ be the Teichmuller lift of $\overline{\chi}_2$. We would like to apply Theorem 6.1.2 of \cite{tap} to $p=l'$, $\rho=V_{\lambda', t}\otimes \chi_2^{-1}$ and $r_{\iota}(\pi)=\sym^{n-1}r_{E,l'}|_{G_{F'}}$. Clearly $\overline{\rho}\cong \overline{r_{\iota}(\pi)}$. For the residual representation $\sym^{n-1}\overline{r}_{E,l'}|_{G_{F'}}$, the two properties of $\overline{r}_{E,l'}$ listed in \ref{po4.1} gives that it is absolutely irreducible and condition (4) of Theorem 6.1.2 of \cite{tap} is satisfied. Now apply Lemma 7.1.5 (2) of \cite{tap} to $F=F$, $F_1=F'$, $l=l'$, $\overline{r}=\overline{r}_{E,l'}$ and the fact that $F'$ and $\overline{\Q}^{\ker \overline{r}_{E,l'}}$ are linearly disjoint over $\Q$, to see $(\sym^{n-1}\overline{r}_{E, l'})(G_{F'(\zeta_{l'})})=(\sym^{n-1}\overline{r}_{E, l'})(G_{F(\zeta_{l'})})$ is enormous. Apply Lemma 7.1.5 (4) of \cite{tap} to the same situation ($H\subset \widetilde{F}F\overline{\Q}^{\ker \overline{r}_{E, l'}}\subset \widetilde{F}F_2^\av\subset F^\av F_2^\av$, thus $H'\subset F^\av F_2^\av$ since $F^\av$ and $F_2^\av$ are both Galois over $\Q$, and then $F'$ linearly disjoint with $F^\av F_2^\av$ over $F$ is satisfied) to see that $\overline{\rho}$ is decomposed generic. Now $V_{\lambda', t}$ is an regular ordinary representation for any place $v_{l'}\mid l'$ of $F'$ by Lemma \ref{3.8} (3) ($v_{l'}(t)>0$) and thus so is $\rho$. $\sym^{n-1}r_{E, l'}|_{G_{F'}}$ is ordinarily automorphic by our choice of $l'$. Theorem 6.1.2 of \cite{tap} thus gives that $V_{\lambda', t}$ is automorphic as a $G_{F'}$ representation. And so $V_{\lambda, t}$ is automorphic as a $G_{F'}$ representation. Now by Lemma \ref{3.8} (4) ($v_l(t)<0$ for any $v_l\mid l$), we see that $V_{\lambda, t}\otimes \chi_1^{-1}$ is regular ordinary as $G_{F'}$ representation and hence $\overline{r}\mid_{G_{F'}}$ is ordinarily automorphic. This finishes the proof of Theorem \ref{1.1}.

Since there are different versions of \cite{tap} with different Lemma 7.1.5, we state it as the following. The proof is taken from an old version of \cite{tap}:

\begin{lemma}
Suppose $F/\Q$ is a finite extension with normal closure $\widetilde{F}/\Q$ and $n\in\Z_{>0}$. Suppose also that $l>2n+5$ is a rational prime and that $\overline{r}:G_F\rightarrow GL_2(\overline{\F}_l)$ is a continuous representation such that $\overline{r}(G_{\widetilde{F}})\supset SL_2(\F_l)$. Finally assume $F_1/F$ is a finite extension that is linearly disjoint from $\overline{F}^{\ker \overline{r}}$ over $F$. Then:

(2) $(\sym^{n-1}\overline{r})(G_{F(\zeta_l)})$ is enormous.

(4) If $F_1/F$ is Galois and linearly disjoint over $F$ from the normal closure $H'$ of $H=\widetilde{F}\overline{F}^{\ker \ad\overline{r}}$ over $\Q$, then $\sym^{n-1}\overline{r}|_{G_{F_1}}$ is decomposed generic.
\end{lemma}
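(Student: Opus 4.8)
The plan is to deduce both assertions from the hypothesis $\overline{r}(G_{\widetilde F})\supseteq SL_2(\F_l)$ by transporting it to $G_{F(\zeta_l)}$ and to $G_{F_1}$, and then to combine purely group-theoretic facts about $\sym^{n-1}$ of $SL_2(\F_l)$ with a Chebotarev argument; throughout one uses $l>2n+5$ (so $SL_2(\F_l)$ is perfect and $PSL_2(\F_l)$ is nonabelian simple) and the two linear-disjointness lemmas recalled at the start of Section \ref{2}. For part (2), first I would note that $\overline{r}(G_{F(\zeta_l)})\supseteq SL_2(\F_l)$: since $\widetilde F(\zeta_l)\supseteq F(\zeta_l)$ with $\widetilde F(\zeta_l)/\widetilde F$ abelian, we get $\overline{r}(G_{F(\zeta_l)})\supseteq\overline{r}(G_{\widetilde F(\zeta_l)})\supseteq[\overline{r}(G_{\widetilde F}),\overline{r}(G_{\widetilde F})]\supseteq[SL_2(\F_l),SL_2(\F_l)]=SL_2(\F_l)$. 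Hence $\sym^{n-1}\overline{r}(G_{F(\zeta_l)})$ is an overgroup of $l$-prime index of $\Gamma:=\sym^{n-1}(SL_2(\F_l))\subseteq GL_n(\F_l)$, and since enormousness passes to such overgroups (absolute irreducibility is preserved; $H^1(\Gamma',W)\to H^1(\Gamma,W)$ is injective when $[\Gamma':\Gamma]$ is prime to $l$; a regular semisimple witness in $\Gamma$ still works for $\Gamma'$), it suffices to prove $\Gamma$ enormous. That is a group-theoretic verification: (a) $\sym^{n-1}$ of the standard module of $SL_2(\F_l)$ is absolutely irreducible because $l>n-1$; (b) by Clebsch--Gordan $\ad(\sym^{n-1})\cong\bigoplus_{j=0}^{n-1}\sym^{2j}(\F_l^2)$, so one needs $H^1(SL_2(\F_l),\sym^{2j}(\F_l^2))=0$ for $0\leq j\leq n-1$, which holds because $2(n-1)$ lies in the vanishing range for $SL_2(\F_l)$-cohomology in this weight, and this is where $l>2n+5$ enters (cf. the corresponding computation in \cite{tap}); (c) the image of a generator $\mathrm{diag}(t,t^{-1})$ of the split torus, $t$ of order $l-1$, is a regular semisimple element of $\Gamma$ (its eigenvalues $t^{n-1-2i}$ are distinct as $l-1>2(n-1)$) whose eigenvalue ratios realize every weight occurring in each $\sym^{2j}(\F_l^2)$. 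Assembling (a)--(c) gives that $\Gamma$, hence $\sym^{n-1}\overline{r}(G_{F(\zeta_l)})$, is enormous.

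For part (4), recall that $\sym^{n-1}\overline{r}|_{G_{F_1}}$ is decomposed generic as soon as one exhibits a finite place $v\nmid l$ of $F_1$, lying above a rational prime $p$ that splits completely in $F_1$, at which $\sym^{n-1}\overline{r}$ is unramified and with $\sym^{n-1}\overline{r}(\mathrm{Frob}_v)$ satisfying the defining eigenvalue genericity. The crucial point is that this condition depends only on $\ad\overline{r}(\mathrm{Frob}_v)$, since scaling $\overline{r}(\mathrm{Frob}_v)$ leaves the ratios of the eigenvalues of its $(n-1)$st symmetric power unchanged. So I would fix a regular semisimple $g\in SL_2(\F_l)\subseteq\overline{r}(G_{\widetilde F})\subseteq\overline{r}(G_{F_1})$ (the last inclusion from the standing hypothesis that $F_1$ is linearly disjoint from $\overline{F}^{\ker\overline{r}}$ over $F$), chosen so that $\sym^{n-1}(g)$ has distinct eigenvalues with pairwise ratios avoiding the finitely many forbidden residues; note $\ad g\in PSL_2(\F_l)$. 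Then I would apply Chebotarev to $\widetilde{F_1}\cdot H'$ over $\Q$: because $F_1/F$ is Galois and linearly disjoint over $F$ from $H'$, which is Galois over $\Q$ and contains $\widetilde F$, applying the second disjointness lemma of Section \ref{2} to each $\Q$-conjugate of $F_1$ gives $\widetilde{F_1}\cap H'=\widetilde F$; hence $\gal(\widetilde{F_1}H'/\widetilde{F_1})\cong\gal(H'/\widetilde F)$ still surjects onto $\gal(\overline{F}^{\ker\ad\overline{r}}\cdot\widetilde F/\widetilde F)\subseteq PGL_2(\F_l)$, and one can find $\mathrm{Frob}_p$ trivial on $\widetilde{F_1}$ (so $p$ splits completely in $F_1$) while realizing the conjugacy class of $\ad g$ on $\overline{F}^{\ker\ad\overline{r}}$; any remaining residual or cyclotomic constraint on $p$ is compatible because $\ad g\in PSL_2(\F_l)$, which has no nontrivial abelian quotient. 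Discarding the finitely many primes above $l$ or of ramification of $\overline{r}$, any such $v\mid p$ in $F_1$ witnesses that $\sym^{n-1}\overline{r}|_{G_{F_1}}$ is decomposed generic.

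The step I expect to be the main obstacle is the Chebotarev bookkeeping of part (4): one must check carefully that linear disjointness of $F_1$ from $H'$ over $F$, together with $H'$ being Galois over $\Q$ and containing both $\widetilde F$ and $\overline{F}^{\ker\ad\overline{r}}$, really forces $\widetilde{F_1}\cap H'=\widetilde F$, and that ``$\mathrm{Frob}_p$ trivial on $\widetilde{F_1}$'' can be imposed simultaneously with ``$\ad\overline{r}(\mathrm{Frob}_p)$ in the class of $\ad g$'' and with any constraint on $p\bmod l$; the choice $g\in SL_2(\F_l)$, which places $\ad g$ in the simple group $PSL_2(\F_l)$, is exactly what makes these last compatibilities automatic. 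For part (2) the only genuine input is the weight-range vanishing $H^1(SL_2(\F_l),\sym^{2j}(\F_l^2))=0$ for $0\leq j\leq n-1$, which is what the hypothesis $l>2n+5$ is tailored to guarantee; the remaining checks are routine manipulation with Clebsch--Gordan and the diagonal torus.
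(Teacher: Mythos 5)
The decisive gap is in your part (4), at the step ``applying the second disjointness lemma of Section \ref{2} to each $\Q$-conjugate of $F_1$ gives $\widetilde{F_1}\cap H'=\widetilde F$.'' That statement is not a formal consequence of $F_1\cap H'=F$: linear disjointness of each conjugate $\sigma F_1$ from $H'$ does not pass to the compositum of the conjugates. A toy counterexample with exactly your hypotheses: take $F=\widetilde F=\Q(\sqrt{2})$, $F_1=F\bigl(\sqrt{3(1+\sqrt{2})}\bigr)$ (quadratic, hence Galois, over $F$) and $H'=\Q(\sqrt{2},i)$, which is Galois over $\Q$ and contains $\widetilde F$. Then $F_1\cap H'=F$ (the quartic field $F_1$ has $\Q(\sqrt 2)$ as its only quadratic subfield and is not contained in $H'$), so $F_1$ and $H'$ are linearly disjoint over $F$; yet $\widetilde{F_1}\supseteq F_1\cdot\sigma F_1$ contains $\sqrt{3(1+\sqrt{2})}\cdot\sqrt{3(1-\sqrt{2})}=\pm 3i$, so $\widetilde{F_1}\cap H'=H'\neq\widetilde F$. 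In the situation of the lemma the needed disjointness of the normal closure from $H$ is true, but only because of the structure of $\gal(H/F)$, and proving it is the entire content of the paper's proof of (4): one first reduces (via \cite{DDT} Theorem 2.47 and an at most quadratic base change) to the case $\ad\overline{r}(G_F)=PSL_2(k)$ with $F/\Q$ Galois, then filters $F_1/F$ by Galois steps with simple Galois groups and shows, using Goursat, the simplicity of $PSL_2(k)$ and a commuting-subgroups/trivial-center argument, that if $H$ met the normal closure of $F_1$ over $\Q$ nontrivially then some conjugate $\sigma H$ would be contained in $F_1$, contradicting $F_1\cap H'=F$; only after this does \cite{tap} Lemma 7.1.5(3) apply. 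Your proposal assumes precisely this hard step, and the Chebotarev argument you build on it (which in any case essentially re-proves \cite{tap} 7.1.5(3) rather than citing it) therefore has no foundation as written.

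For part (2) the paper simply quotes Lemma 7.1.5(2) of \cite{tap}, whereas you attempt a proof, and your reduction has its own gap: there is no reason that $\sym^{n-1}\overline{r}(G_{F(\zeta_l)})$ contains $\Gamma=\sym^{n-1}(SL_2(\F_l))$ with index prime to $l$. The hypothesis only gives $\overline{r}(G_{\widetilde F})\supseteq SL_2(\F_l)$; the image of $G_{F(\zeta_l)}$ may contain $SL_2(\F_{l^m})$ with $m\geq 2$, and $[SL_2(\F_{l^m}):SL_2(\F_l)]$ is divisible by $l$, so the restriction--corestriction argument for $H^1$ does not apply; moreover enormousness is not inherited by arbitrary overgroups (for instance the ``no nontrivial $l$-power quotient'' condition and the $H^1$-vanishing can both fail to pass). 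So one must either cite \cite{tap} 7.1.5(2), as the paper does, or argue directly for every group $H$ with $SL_2(\F_{l^m})\subseteq H\subseteq\overline{\F}_l^{\times}GL_2(\F_{l^m})$; your Clebsch--Gordan/torus computation for $SL_2(\F_l)$ alone does not suffice.
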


\begin{proof}
(2) This is Lemma 7.1.5(2) of \cite{tap}.

(4) It suffices to show $\sym^{n-1}\overline{r}_{G_{F_2}}$ is decomposed generic for some finite extension $F_2/F_1$.

We may assume without loss of generality that $F$ is Galois over $\Q$. Now by \cite{DDT} Theorem 2.47(b), $\ad\overline{r}(G_F)=PGL_2(k)$ or $PSL_2(k)$ for some finite extension $k/\F_l$. We first take an at most qurdratic extension $E/F$ such that $\ad\overline{r}(G_E)=PSL_2(k)$. Then by Goursat Lemma, $\gal(\widetilde{E}/E)=(\Z/2\Z)^r$ for some $r\geq 0$. Hence $\overline{F}^{\ker \ad\overline{r}}$ and $\widetilde{E}$ are linearly disjoint over $E$ by an analysis of the simple factors of each Galois group. Thus $\ad\overline{r}(G_{\widetilde{E}})=PSL_2(k)\supset PSL_2(\F_l)$. Now since $E\subset\overline{F}^{\ker\ad\overline{r}}$, $\widetilde{E}\overline{\widetilde{E}}^{\ker\ad\overline{r}}\subset\widetilde{\overline{F}^{\ker\ad\overline{r}}}=H'$, and so its normal closure over $\Q$ is still $H'$. Now $F_1$ and $H$ linearly disjoint over $F$ implies that $E_1:=\widetilde{E}F_1$ is linearly disjoint from $H'$ over $\widetilde{E}$ as well. We therefore can assume (replacing $F$ by $\widetilde{E}$ and $F_1$ by $E_1$) without loss of generality that $\ad\overline{r}(G_F)=PSL_2(k)$ and $F$ is Galois over $\Q$.

Now we choose a sequence of subfields $F=F_0'\subset F_1'\subset \cdots\subset F_s'=F_1$ such that $F_i'$ is Galois over $F_{i-1}'$ and $\gal(F_i'/F_{i-1}')$ is simple. Set $\widetilde{F}_i'$ to be the normal closure of $F_i'$ over $\Q$. Hence $\gal(\widetilde{F}_i'/\widetilde{F}_{i-1}')$ is trivial if and only if $F_i'\subset\widetilde{F}_{i-1}'$ and is of form $\Delta_i^m$ for some $m>0$ where $\Delta_i=\gal(F_i'/F_{i-1}')$ otherwise (Goursat Lemma). Now if $H\cap\widetilde{F}_s'=F$, then we may apply Lemma 7.1.5 (3) of \cite{tap} to conclude.

Otherwise, there should exist a minimal $i$ such that $H\cap \widetilde{F}_i'\neq F$. Since $\gal(H/F)=PSL_2(k)$ is simple, we see that $H\subset \widetilde{F}_i'$. Now minimality gives $\widetilde{F}_{i-1}'\cap H=F$, so that $\Delta_i^m=\gal(\widetilde{F}_i'/\widetilde{F}_{i-1}')\twoheadrightarrow \gal(H/F)$, thus $\Delta_i=PSL_2(k)$, $m>0$. We claim there exists $\sigma\in G_\Q$ such that $\sigma H\subset F_i'\widetilde{F}_{i-1}'$. We may write $\widetilde{F}_i'$ as the composite of $\sigma_jF_i'\widetilde{F}_{i-1}'$, where $\sigma_1,\ldots, \sigma_m$ are elements of $G_\Q$, the fields $\sigma_jF_i'\widetilde{F}_{i-1}'$ is Galois over $\widetilde{F}_{i-1}'$ with Galois group $PSL_2(k)$, and for any two disjoint subsets $I, J$ of $\{1,\ldots, m\}$, the composite of $\sigma_jF_i'\widetilde{F}_{i-1}'$ for $j\in I$ and the composite of $\sigma_jF_i'\widetilde{F}_{i-1}'$ for $j\in J$ are linearly disjoint over $\widetilde{F}_{i-1}'$. Now because $H\subset \widetilde{F}_i'$ and $\widetilde{F}_{i-1}'\cap H=F$, we see that $H\widetilde{F}_{i-1}'$ is a Galois subextension of $\widetilde{F}_i'/\widetilde{F}_{i-1}'$ with Galois group $PSL_2(k)$. We may pick the smallest $j$ such that $E_j:=(\sigma_1F_i')\cdots(\sigma_jF_i')\widetilde{F}_{i-1}'\supset H\widetilde{F}_{i-1}'$. The minimality gives that $E_{j-1}$ as a Galois extension of $\widetilde{F}_{i-1}'$ does not contain $H\widetilde{F}_{i-1}'$, whose Galois group over $\widetilde{F}_{i-1}'$ is simple. It follows that $E_{j-1}$ is linearly disjoint with $H\widetilde{F}_{i-1}'$ over $\widetilde{F}_{i-1}'$. Therefore, restriction map takes $\gal(E_j/E_{j-1})$ onto $\gal(H\widetilde{F}_{i-1}'/\widetilde{F}_{i-1}')$. Observe that $E_{j-1}$ and $\sigma_jF_i'\widetilde{F}_{i-1}'$ are linearly disjoint over $\widetilde{F}_{i-1}'$, hence $\gal(E_j/\widetilde{F}_{i-1}')=\gal(E_j/E_{j-1})\times \gal(E_j/\sigma_jF_i'\widetilde{F}_{i-1}')$. The latter group $\gal(E_j/\sigma_jF_i'\widetilde{F}_{i-1}')$ commutes with $\gal(E_j/E_{j-1})$ inside $\gal(E_j/\widetilde{F}_{i-1}')$. Hence under restriction map, by the surjective result proved above, $\gal(E_j/\sigma_jF_i'\widetilde{F}_{i-1}')$ maps into the center of $\gal(H\widetilde{F}_{i-1}'/\widetilde{F}_{i-1}')$, which is trivial. This gives us that $\sigma_jF_i'\widetilde{F}_{i-1}'\supset H\widetilde{F}_{i-1}'$. Thus taking $\sigma_j$ is sufficient for our claim.

Consider the image of $\gal(\widetilde{F}_{i-1}'F_i'/F_i')$ in $\gal(\sigma H/F)$ under the natural restriction map. The fact $m>0$ gives that $F_i'$ and $\widetilde{F}_{i-1}'$ are linearly disjoint over $F_{i-1}'$, and so $\gal(\widetilde{F}_{i-1}'F_i'/F_i')$ and $\gal(\widetilde{F}_{i-1}'F_i'/\widetilde{F}_{i-1}')$ are commuting subgroups of $\gal(\widetilde{F}_{i-1}'F_i'/F_{i-1}')$. Under the restriction map to $\gal(\sigma H/F)$, since $\sigma H\cap\widetilde{F}_{i-1}'=F$, $\gal(\widetilde{F}_{i-1}'F_i'/\widetilde{F}_{i-1}')$ surjects onto $\gal(\sigma H/F)$, so the image of $\gal(\widetilde{F}_{i-1}'F_i'/F_i')$ lies in the center of $\gal(\sigma H/F)\cong PSL_2(k)$,  and hence is trivial. Therefore, $\sigma H\subset F_i'$, which contradicts the condition that $H'$ and $F_1$ are linearly disjoint over $F$.
\end{proof}

For the proof of Theorem \ref{1.4}, we know from above that $\overline{\chi_{1}}^{-1}V[\lambda]_{t}\cong\overline{r}|_{G_{F'}}$ and $V_{\lambda, t}\otimes \chi_1^{-1}$ is regular ordinary as a $G_{F'}$ representation. Thus, in order to apply Theorem 6.1.2 of \cite{tap}, it suffices to verify that the conditions (3) and (4) of that theorem holds for  $\overline{r}|_{G_{F'}}$. Since $F'$ is linearly disjoint with $\overline{F}^{\ker \overline{r}}\subset F^\av$ over $F$, all conditions except decomposed genericity follows from the corresponding conditions of $\overline{r}$. Now $F'=L'EF^\suff=F L'LF^\suff(\zeta_N)$ is Galois over $F$. And the Galois closure of $\overline{F}^{\ker \overline{r}}(\zeta_l)$ is linearly disjoint with $F'$ over $\Q$ because this Galois closure is contained in $F^\av$ and that $ L'LF^\suff(\zeta_N)\cap F^\av=L'LF^\suff(\zeta_N)\cap LF_0\cap F^\av=LF^\suff(\zeta_N)\cap F^\av=LF^\suff(\zeta_N)\cap F_0\cap F^\av=F^\suff(\zeta_N)\cap F^\av=F^\suff(\zeta_N)\cap F^\suff F^\av F_2^\av\cap F^\av=F^\suff \cap F^\av=\Q$. Now we may apply Lemma 7.1.6 of \cite{tap} to see the decomposed genericity.   Hence we also finish the proof of Theorem \ref{1.4}.

\bibliographystyle{amsalpha}
\bibliography{mybibliography}

\newcommand{\etalchar}[1]{$^{#1}$}
\providecommand{\bysame}{\leavevmode\hbox to3em{\hrulefill}\thinspace}
\providecommand{\MR}{\relax\ifhmode\unskip\space\fi MR }
% \MRhref is called by the amsart/book/proc definition of \MR.
\providecommand{\MRhref}[2]{%
  \href{http://www.ams.org/mathscinet-getitem?mr=#1}{#2}
}
\providecommand{\href}[2]{#2}
\begin{thebibliography}{BLGHT09}

\bibitem[ACC{\etalchar{+}}18]{tap}
Patrick~B. {Allen}, Frank {Calegari}, Ana {Caraiani}, Toby {Gee}, David {Helm},
  Bao~V. {Le Hung}, James {Newton}, Peter {Scholze}, Richard {Taylor}, and
  Jack~A. {Thorne}, \emph{{Potential automorphy over CM fields}}, arXiv
  e-prints (2018), arXiv:1812.09999.

\bibitem[BH89]{BH}
F.~Beukers and G.~Heckman, \emph{Monodromy for the hypergeometric function
  nfn-1}, Inventiones mathematicae \textbf{95} (1989), no.~2, 325--354.

\bibitem[BLGHT09]{BLGHT}
Tom Barnet-Lamb, David Geraghty, Michael Harris, and Richard Taylor, \emph{A
  family of calabi–yau varieties and potential automorphy ii}, Publications
  of the Research Institute for Mathematical Sciences \textbf{47} (2009).

\bibitem[Car72]{spgp}
Roger~W. Carter, \emph{Simple groups of lie type}, Pure and applied mathematics
  (John Wiley {\&} Sons)v. 28, Wiley-Interscience, London, New York, 1972, [3],
  viii, 331 p.

\bibitem[CHT08]{CHT}
Laurent Clozel, Michael Harris, and Richard Taylor, \emph{Automorphy for some
  l-adic lifts of automorphic mod l galois representations}, Publications
  Math\'ematiques de l'IH\'ES \textbf{108} (2008), 1--181 (en). \MR{2470687}

\bibitem[DDT95]{DDT}
Henri Darmon, Fred Diamond, and Richard J.~K. Taylor, \emph{Fermat’s last
  theorem}, 1995.

\bibitem[Del82]{DMOS}
P.~Deligne, \emph{Hodge cycles on abelian varieties}, pp.~9--100, Springer
  Berlin Heidelberg, Berlin, Heidelberg, 1982.

\bibitem[HLTT16]{HLLT}
Michael Harris, Kai-Wen Lan, Richard Taylor, and Jack Thorne, \emph{On the
  rigid cohomology of certain shimura varieties}, Research in the Mathematical
  Sciences \textbf{3} (2016), no.~1, 37.

\bibitem[HSBT10]{HSBT}
Michael Harris, Nick Shepherd-Barron, and Richard Taylor, \emph{A family of
  calabi-yau varieties and potential automorphy}, Annals of Mathematics
  \textbf{171} (2010), no.~2, 779--813.

\bibitem[Ill94]{ill}
Luc Illusie, \emph{Expos\'e i~: Autour du th\'eor\`eme de monodromie locale},
  P\'eriodes $p$-adiques - S\'eminaire de Bures, 1988 (Jean-Marc Fontaine,
  ed.), Ast\'erisque, no. 223, Soci\'et\'e math\'ematique de France, 1994,
  talk:1, pp.~9--57 (fr). \MR{1293970}

\bibitem[NSW08a]{Neuk}
J{\"u}rgen Neukirch, Alexander Schmidt, and Kay Wingberg, \emph{The absolute
  galois group of a global field}, pp.~521--597, Springer Berlin Heidelberg,
  Berlin, Heidelberg, 2008.

\bibitem[NSW08b]{Neu}
\bysame, \emph{Cohomology of global fields}, pp.~425--520, Springer Berlin
  Heidelberg, Berlin, Heidelberg, 2008.

\bibitem[Qia21]{qian}
Lie Qian, \emph{Ordinarity of local galois representation arising from dwork
  motives}, 2021.

\bibitem[Ser81]{Ser81}
Jean-Pierre Serre, \emph{Quelques applications du th\'eor\`eme de densit\'e de
  chebotarev}, Publications Math\'ematiques de l'IH\'ES \textbf{54} (1981),
  123--201 (fr). \MR{83k:12011}

\bibitem[Ser72]{Ser72}
\bysame, \emph{Propriétés galoisiennes des points d'ordre fini des courbes
  elliptiques.}, Inventiones mathematicae \textbf{15} (1971/72), 259--331
  (fre).

\bibitem[ZS76]{SZ}
A~E Zalesski{\u{\i}} and V~N Sere{\v{z}}kin, \emph{{LINEAR} {GROUPS}
  {GENERATED} {BY} {TRANSVECTIONS}}, Mathematics of the {USSR}-Izvestiya
  \textbf{10} (1976), no.~1, 25--46.

\end{thebibliography}

{\it Date}: March 11, 2019.

\end{document}